\documentclass{article}

\usepackage{graphicx}

\usepackage{cite}
\usepackage{amsmath, amssymb, amsthm, enumerate, mathrsfs}

\usepackage[ruled,vlined,linesnumbered]{algorithm2e}
\usepackage{color}
\usepackage{float}
\usepackage{pgfplots}
\pgfplotsset{compat=1.18}

\newtheorem{theorem}{Theorem}[section]
\newtheorem{corollary}[theorem]{Corollary}
\newtheorem{lemma}[theorem]{Lemma}
\newtheorem{proposition}[theorem]{Proposition}

\theoremstyle{definition}

\newtheorem{definition}[theorem]{Definition}
\newtheorem{example}[theorem]{Example}
\newtheorem{assumption}{Assumption}[section]

\numberwithin{equation}{section}

\makeatletter
\let\c@algocf\c@theorem

\makeatother

\SetAlCapFnt{\footnotesize}
\SetAlCapNameFnt{\footnotesize}

\usepackage{caption}
\begin{document}

\makeatletter

\begin{center}
\large{\textbf{Further analysis and extension of the higher-order Newton method of Ahmadi, Chaudhry, and Zhang}}
\end{center}

\vspace{5mm}

\begin{center}
\textsc{Lucas ter Voert, Etienne de Klerk}
\end{center}

\vspace{2mm}

\footnotesize{

\noindent
\begin{minipage}{14cm}
\textbf{Abstract:} We extend a $d$\textsuperscript{th}-order Newton method for unconstrained optimization by Ahmadi, Chaudhry, and Zhang [\textit{Advances in Mathematics}, 452:109808] to optimization with SOS-convex polynomial constraints. Consider the problem of minimizing a smooth function $f:\mathbb{R}^{n}\to\mathbb{R}$ subject to SOS-convex polynomial constraints. Given an iterate $x\in\mathbb{R}^n$, Ahmadi \textit{et al}.\ define the next iterate $x^{+}$ as the minimizer of the $d$\textsuperscript{th}-order Taylor expansion of $f$ at $x$ with a regularization term of degree $d^{\prime}$, where $d^{\prime}$ is the smallest even number greater than $d$, chosen such that this polynomial is SOS-convex, subject to the constraints. Constructing this polynomial and minimizing it subject to the constraints can both be reduced in time polynomial in $n$ to a semidefinite program (SDP). We prove that, if $f$ is strongly convex and the tensor of the $d$\textsuperscript{th}-order partial derivatives of $f$ is Lipschitz continuous, then our method converges locally to the optimal solution $x^{\ast}$ with order $d$. We further prove that, under certain constraint qualifications, the set of active constraints at $x^{\ast}$ is identified locally in a single iteration. Next, we study the worst-case performance of the third-order Newton method in the unconstrained setting for two classes of univariate $f$ using performance estimation. Finally, we extend a globally convergent modification of the $d$\textsuperscript{th}-order Newton method to the setting of SOS-convex polynomial constraints.
\end{minipage}
\\

\vspace{5mm}

\noindent\textbf{Keywords:} {Constrained optimization, Newton's method, higher-order method, semidefinite programming, sum of squares (SOS), convergence analysis, performance estimation.}\\
\noindent\textbf{Mathematics Subject Classification:} {90C25 (convex programming), 90C22 (semidefinite programming), 90C23 (polynomial optimization)}

\hbox to14cm{\hrulefill}\par

\section{Introduction}

Newton's method is a classical algorithm for minimizing a smooth function $f:\mathbb{R}^{n}\to\mathbb{R}$. Given an iterate $x\in\mathbb{R}^{n}$, the next iterate $x^{+}$ is defined as the minimizer of the second-order Taylor expansion of $f$ at $x$, and can be computed in time polynomial in $n$ by solving a system of linear equations. It is well known that, if $f$ is strongly convex and the Hessian of $f$ is Lipschitz continuous, then Newton's method converges locally quadratically to the optimal solution $x^{\ast}$.

Since a higher-order Taylor expansion typically provides a more accurate local approximation of $f$ on a neighborhood of $x$, it is natural to ask whether the convergence properties of Newton's method can be improved by using a higher-order Taylor expansion. This approach, however, faces two obstacles. First, for every even number $d\geq4$, the $d$\textsuperscript{th}-order Taylor expansion of $f$ at $x$ may be unbounded from below, even if $f$ is strongly convex and $x$ is arbitrarily close to $x^{\ast}$. For an odd order, the situation is even more immediate, since every polynomial of odd degree is unbounded from below. Second, minimizing a polynomial of degree at most an even number $d\geq4$ is strongly $\operatorname{NP}$-hard (see, e.g., \cite{murty1987np}).

Ahmadi, Chaudhry, and Zhang \cite{ahmadi2024higher} removed these obstacles by designing a method based on the notion of an \emph{SOS-convex} polynomial (see Definition~\ref{def:sos-convex}). They define $x^{+}$ as the minimizer of the $d$\textsuperscript{th}-order Taylor expansion of $f$ at $x$ with a regularization term of degree $d^{\prime}$, where $d^{\prime}$ is the smallest even number greater than $d$, chosen such that this polynomial is SOS-convex. Constructing and minimizing this polynomial can both be reduced in time polynomial in $n$ to a semidefinite program (SDP). Under some assumptions, an SDP can be solved to arbitrary accuracy in polynomial time using interior-point methods (see, e.g., \cite{klerk2016turing}). Ahmadi, Chaudhry, and Zhang \cite{ahmadi2024higher} proved that, if $f$ is strongly convex and the tensor of $d$\textsuperscript{th}-order partial derivatives of $f$ is Lipschitz continuous, then their method converges locally to $x^{\ast}$ with order $d$.

Ahmadi, Chaudhry, and Zhang \cite{ahmadi2024higher} noted that their method can readily be extended to the setting of SOS-convex polynomial constraints. In this paper, we formally define this extension and prove that it retains the convergence properties established in the unconstrained setting. We further prove that, under certain constraint qualifications, the set of active constraints at $x^{\ast}$ is identified locally in a single iteration.

Throughout this paper, we assume that we have access to an oracle that gives the $d$\textsuperscript{th}-order partial~derivatives of $f$ at $x$, so that the coefficients of the $d$\textsuperscript{th}-order Taylor expansion of $f$ at $x$ may be obtained using a number of oracle calls that is polynomial in $n$.

\subsection{Previous research}

Many variants of Newton's method have been proposed to improve its convergence properties. For~example, Nesterov and Polyak \cite{nesterov2006cubic} define $x^{+}$ as the minimizer of the second-order Taylor expansion of $f$ at $x$ with a cubic regularization term, which can be computed to arbitrary accuracy in time polynomial in $n$ using linear algebra techniques. If $f$ is strongly convex and the Hessian of $f$ is Lipschitz continuous, then this method converges globally to $x^{\ast}$, while retaining a local quadratic convergence rate.

There has been extensive work on generalizing the idea of a regularized Newton method to a higher order. For example, Nesterov \cite{nesterov2021implementable} defines $x^{+}$ as the minimizer of the $d$\textsuperscript{th}-order Taylor expansion of $f$ at $x$ with a regularization term of degree $d+1$, chosen such that this function is convex. If $f$ is strongly convex and the tensor of $d$\textsuperscript{th}-order partial derivatives of $f$ is Lipschitz continuous, then this method converges globally to $x^{\ast}$. Doikov and Nesterov \cite{doikov2022local} further proved that it has a local convergence rate of order $d$. Nesterov \cite{nesterov2021implementable} gave an algorithm that can compute a minimizer of a cubic function with a quartic regularization term to arbitrary accuracy in polynomial time. Therefore, this method can readily be implemented for $d=3$. To the best of our knowledge, no algorithm is known that can implement this method for $d\geq4$ in a way that improves upon Newton's method.

Ahmadi and Zhang \cite{ahmadi2022complexitya} showed that finding a local minimizer of a cubic function can be reduced in linear time to an SDP. Based on this result, Silina and Zhang \cite{silina2022unregularized} define $x^{+}$ as the local minimizer of the third-order Taylor expansion of $f$ at $x$. If $f$ is strongly convex and the tensor of third-order partial derivatives of $f$ is Lipschitz continuous, then this method converges locally cubically to $x^{\ast}$. By Theorem~2.1 in \cite{ahmadi2022complexityb}, finding a local minimizer of a polynomial of degree at most $d\geq4$ is strongly $\operatorname{NP}$-hard. Therefore, this method cannot easily be generalized to a higher order, unless $\operatorname{P}=\operatorname{NP}$.

To the best of our knowledge, the method of Ahmadi, Chaudhry, and Zhang \cite{ahmadi2024higher} is the first algorithm that is known to converge locally to $x^{\ast}$ with order $d$, while requiring time polynomial in $n$ per iteration (in a sense to be made precise later). They also gave a modification of their method that converges globally to $x^{\ast}$, while retaining a local convergence rate of order $d$.

Finally, Cartis and Zhu \cite{cartis2024global} developed a variant of the method of Ahmadi, Chaudhry, and Zhang \cite{ahmadi2024higher} using adaptive regularization. This method converges globally to a stationary point of $f$, even if $f$ is nonconvex. If $f$ is strongly convex, then it retains a local convergence rate of order $d$.

\subsection{Outline and contributions}

In Section~2, we review prerequisites on higher-order Taylor expansions and SOS-convex polynomials. In Section~3, we formally define the method of Ahmadi, Chaudhry, and Zhang \cite{ahmadi2024higher} in the setting of SOS-convex polynomial constraints (Algorithm~\ref{alg:dth-order-newton-method}).

In Section~4, we strengthen a polynomial integral lower bound by Ahmadi, Chaudhry, and Zhang \cite{ahmadi2024higher}. In Section~5, we prove that the method of Ahmadi, Chaudhry, and Zhang \cite{ahmadi2024higher} in the setting of SOS-convex polynomial constraints is well-defined in the sense that, for every iterate $x\in\mathbb{R}^{n}$, the next iterate $x^{+}$ exists and is unique (Theorem~\ref{thm:well-definedness}). We then prove that it retains the convergence properties established in the unconstrained setting (Theorem~\ref{thm:convergence}). These proofs are analogous to the proofs of Ahmadi, Chaudhry, and Zhang \cite{ahmadi2024higher} in the unconstrained setting and use the polynomial integral lower bound. We further prove that, under certain constraint qualifications, the set of active constraints at $x^{\ast}$ is identified locally in a single iteration. This result follows from a theorem by Robinson \cite{robinson1974perturbed} and is the same as for sequential quadratic programming (SQP) (see, e.g., Chapter~18 in \cite{nocedal2006numerical}).

In Section~6, we study the worst-case performance of the method of Ahmadi, Chaudhry, and Zhang \cite{ahmadi2024higher} for $d=3$ in the unconstrained setting for two classes of univariate $f$ using the performance estimation as introduced by Drori and Teboulle \cite{drori2014performance}. Our work here is closely related to the work of Rubbens, Bousselmi, Hendrickx, and Glineur \cite{rubbens2025performance}, who studied the worst-case performance of second-order methods for various classes of univariate $f$.
In particular, we consider univariate quartic polynomials, as well as univariate quasi-self-concordant functions, and establish explicit local superlinear convergence rates in both cases.

In Section~7, we extend the globally convergent modification of the method of Ahmadi, Chaudhry, and Zhang \cite{ahmadi2024higher} to the setting of SOS-convex polynomial constraints (Algorithm~\ref{alg:globally-convergent-dth-order-newton-method}). We prove that it retains the convergence properties established in the unconstrained setting (Theorem~\ref{thm:global-convergence}). Finally in Section~8, we discuss some generalized settings in which our methods can be applied.

\newpage

\section{Prerequisites}

\subsection{Higher-order Taylor expansion}

Let $f:\mathbb{R}^{n}\to\mathbb{R}$ be $d$ times continuously differentiable, and let $x\in\mathbb{R}^{n}$. The tensor of $d$\textsuperscript{th}-order partial~derivatives of $f$ at $x$ is denoted by $\nabla^{d}f\left(x\right)$. Using this notation, the $d$\textsuperscript{th}-order Taylor expansion $T_{x,d}:\mathbb{R}^{n}\to\mathbb{R}$ of $f$ at $x$ is defined by $$\text{$T_{x,d}\left(y\right):=\sum_{k=0}^{d}\frac{1}{k!}\nabla^{k}f\left(x\right){\underbrace{\left[y-x,\dots,y-x\right]}_{\text{$k$ times}}}$.}$$ The remainder $R_{x,d}:\mathbb{R}^{n}\to\mathbb{R}$ of the $d$\textsuperscript{th}-order Taylor expansion of $f$ at $x$ is defined by $$\text{$R_{x,d}\left(y\right):=f\left(y\right)-T_{x,d}\left(y\right)$.}$$ The norm of a tensor $A\in\mathbb{R}^{n_{1}\times\dots\times n_{d}}$ is defined as $$\text{$\left\|A\right\|=\max_{\left\|u_{1}\right\|=\cdots=\left\|u_{d}\right\|=1}\left|A\left[u_{1},\dots,u_{d}\right]\right|$.}$$ Using this definition, the norm of a symmetric tensor $A\in\mathbb{R}^{n\times\dots\times n}$ is $$\left\|A\right\|:=\max_{\left\|u\right\|=1}\left|A\left[u,\dots,u\right]\right|,$$ see, e.g., Theorem~13.23 in \cite{guler2010foundations}.

\begin{theorem}[see, e.g., inequality (11) in \cite{baes2009estimate}]
    \label{thm:baes}
    Let $f:\mathbb{R}^{n}\to\mathbb{R}$ be $d$ times continuously differentiable. Assume that the tensor of $d$\textsuperscript{th}-order partial derivatives of $f$ is Lipschitz continuous. That is, there exists a constant $L\geq0$ such that $$\text{$\left\|\nabla^{d}f\left(y\right)-\nabla^{d}f\left(x\right)\right\|\leq L\left\|y-x\right\|$ for all $x,y\in\mathbb{R}^{n}$.}$$ Then, $$\text{$\left\|\nabla^{k}R_{x,d}\left(y\right)\right\|\leq\mathop{\frac{L}{\left(d-k+1\right)!}}\left\|y-x\right\|^{d-k+1}$ for all $x,y\in\mathbb{R}^{n}$.}$$
\end{theorem}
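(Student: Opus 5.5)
The plan is to reduce the statement to the case $k=d$ and then integrate downward one order at a time. The range of $k$ is implicitly $0\le k\le d$, and $\nabla^{k}R_{x,d}$ denotes the $k$\textsuperscript{th} derivative of $R_{x,d}$ with respect to $y$. Since $T_{x,d}$ is a polynomial of degree at most $d$, we have
$$\nabla^{k}R_{x,d}\left(y\right)=\nabla^{k}f\left(y\right)-\sum_{j=k}^{d}\frac{1}{\left(j-k\right)!}\nabla^{j}f\left(x\right)\left[y-x,\dots,y-x,\cdot,\dots,\cdot\right],$$
where $y-x$ is inserted $j-k$ times and $k$ slots remain free. This is the $(d-k)$\textsuperscript{th}-order Taylor expansion of the tensor-valued map $\nabla^{k}f$ around $x$.

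\emph{Base case $k=d$.} Here $\nabla^{d}R_{x,d}(y)=\nabla^{d}f(y)-\nabla^{d}f(x)$. The Lipschitz assumption gives directly $\|\nabla^{d}R_{x,d}(y)\|\le L\|y-x\|$, which equals $\frac{L}{1!}\|y-x\|^{1}$.

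\emph{Downward induction.} Assume the bound holds for $k+1$, and fix $y$. Write $z_t=x+t(y-x)$ and note that $\nabla^{k}R_{x,d}(x)=0$ for all $k\le d$. By the fundamental theorem of calculus,
$$\nabla^{k}R_{x,d}\left(y\right)=\int_{0}^{1}\nabla^{k+1}R_{x,d}\left(z_t\right)\left[y-x,\cdot,\dots,\cdot\right]dt.$$
Take arbitrary unit vectors $u_{1},\dots,u_{k}$ and apply both sides to them. Bound the integrand by
$$\|\nabla^{k+1}R_{x,d}(z_t)\|\,\|y-x\|\le\frac{L}{(d-k)!}\,t^{d-k}\|y-x\|^{d-k+1},$$
using the definition of the general tensor norm (multilinear form on unit vectors) and the induction hypothesis applied at the point $z_t$, since $\|z_t-x\|=t\|y-x\|$. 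Integrating $t^{d-k}$ over $[0,1]$ gives the factor $\frac{1}{d-k+1}$. The result is $\frac{L}{(d-k+1)!}\|y-x\|^{d-k+1}$. Taking the maximum over unit vectors completes the induction.

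The only delicate point is norm bookkeeping. The induction hypothesis must be used with the general (non-symmetric) multilinear norm from the paper's definition, because the contracted tensor $A[y-x,\cdot,\dots]$ is bounded by $\|A\|\,\|y-x\|$ in that norm. Since all the tensors involved are symmetric, this norm coincides with the symmetric one by the cited result of \cite{guler2010foundations}, so nothing is lost. Continuity of $\nabla^{d}f$ justifies differentiating under the integral sign and applying the fundamental theorem of calculus. Apart from this, the proof is routine.
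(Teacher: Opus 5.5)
Your proof is correct and complete. The base case is exactly the Lipschitz hypothesis. The identity $\nabla^{k}R_{x,d}(x)=0$ for $k\le d$, together with the fundamental theorem of calculus along the segment, gives the recursion; this is legitimate because $\nabla^{k}R_{x,d}$ is $C^{1}$ for $k\le d-1$. Finally, $\int_{0}^{1}t^{d-k}\,dt=\frac{1}{d-k+1}$ produces precisely the constant $\frac{L}{(d-k+1)!}$. Running the induction in the general multilinear norm is the right bookkeeping. One small correction: nothing is differentiated under an integral sign; you only use that $t\mapsto\nabla^{k}R_{x,d}(z_t)[u_{1},\dots,u_{k}]$ is $C^{1}$.

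The paper itself gives no proof and only cites inequality (11) of \cite{baes2009estimate}. The standard argument for such estimates is non-inductive. For $k\le d-1$ one writes the remainder of the $(d-k)$th-order Taylor expansion of $\nabla^{k}f$ in integral form,
\[
\nabla^{k}R_{x,d}(y)=\int_{0}^{1}\frac{(1-t)^{d-k-1}}{(d-k-1)!}\bigl(\nabla^{d}f(z_t)-\nabla^{d}f(x)\bigr)[y-x,\dots,y-x,\cdot,\dots,\cdot]\,dt,
\]
with $y-x$ inserted $d-k$ times. One then applies the Lipschitz bound once and evaluates the Beta integral $\int_{0}^{1}\frac{(1-t)^{d-k-1}}{(d-k-1)!}\,t\,dt=\frac{1}{(d-k+1)!}$.

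Your downward induction is essentially this computation carried out one order at a time: iterating your FTC identity $d-k$ times reproduces exactly this kernel. Your route needs only the one-variable fundamental theorem of calculus, not the tensor-valued Taylor formula with integral remainder. The one-shot formula instead isolates the single place where the hypothesis enters and yields each order $k$ directly, without passing through the intermediate orders. Both give the same constant.
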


\subsection{SOS-convex polynomial}

Deciding whether a polynomial of degree at most an even number $d\geq4$ is nonnegative is strongly $\operatorname{NP}$-hard (see, e.g., \cite{murty1987np}). However, a tractable sufficient condition is that it is a \emph{sum of squares (SOS)}.

\begin{definition}
    A polynomial $p:\mathbb{R}^{n}\to\mathbb{R}$ is called a \emph{sum of squares (SOS)} if there exist polynomials $q_{1},\dots,q_{r}:\mathbb{R}^{n}\to\mathbb{R}$ such that $p=\sum_{i=1}^{r}q_{i}^{2}$.
\end{definition}

Note that every SOS polynomial is nonnegative. Hilbert \cite{hilbert1888darstellung} proved that every nonnegative polynomial of degree at most an even number $d\geq2$ is SOS if and only if $n=1$, $d=2$, or $\left(n,d\right)=\left(2,4\right)$ (for a modern exposition, see, e.g., \cite{reznick2000concrete}).

\begin{theorem}[see, e.g., equation (4.4) in \cite{parrilo2000structured}]
    \label{thm:parrilo}
    A polynomial $p:\mathbb{R}^{n}\to\mathbb{R}$ of degree at most an even number $d\geq2$ is SOS if and only if there exists a symmetric matrix $Q\succeq0$ such that $$\text{$p\left(x\right)=\left\langle Q\left[x\right]_{\frac{d}{2}},\left[x\right]_{\frac{d}{2}}\right\rangle$,}$$ where $\left[x\right]_{\frac{d}{2}}$ denotes the vector of monomials in $x\in\mathbb{R}^{n}$ of degree at most $\frac{d}{2}$.
\end{theorem}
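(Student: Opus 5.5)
We prove both directions of the equivalence in Theorem~\ref{thm:parrilo} by passing between SOS decompositions and Gram matrices through the monomial vector $\left[x\right]_{\frac{d}{2}}$. Let $N$ denote the length of $\left[x\right]_{\frac{d}{2}}$, i.e.\ the number of monomials in $n$ variables of degree at most $\frac{d}{2}$.

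For the ``if'' direction, suppose $p\left(x\right)=\left\langle Q\left[x\right]_{\frac{d}{2}},\left[x\right]_{\frac{d}{2}}\right\rangle$ with $Q\succeq0$. We factor $Q=V^{\top}V$, for instance via the spectral decomposition $Q=\sum_{i}\lambda_{i}v_{i}v_{i}^{\top}$ with $\lambda_{i}\geq0$. Then
$$p\left(x\right)=\left\|V\left[x\right]_{\frac{d}{2}}\right\|^{2}=\sum_{i=1}^{r}\left(\sqrt{\lambda_{i}}\,\left\langle v_{i},\left[x\right]_{\frac{d}{2}}\right\rangle\right)^{2},$$
and each $q_{i}\left(x\right):=\sqrt{\lambda_{i}}\left\langle v_{i},\left[x\right]_{\frac{d}{2}}\right\rangle$ is a polynomial. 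Hence $p$ is SOS.

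For the ``only if'' direction, suppose $p=\sum_{i=1}^{r}q_{i}^{2}$. The key step is a degree bound: every $q_{i}$ has degree at most $\frac{d}{2}$. Suppose instead that $m:=\max_{i}\deg q_{i}>\frac{d}{2}$, and let $h_{i}$ be the homogeneous part of $q_{i}$ of degree $m$, with $h_{i}=0$ when $\deg q_{i}<m$. The homogeneous part of $p$ of degree $2m$ is then $\sum_{i}h_{i}^{2}$. This is a sum of squares of real polynomials that are not all zero, so it is nonzero: at some point $x$ where some $h_{i}\left(x\right)\neq0$, the sum is strictly positive. No cancellation can occur. This contradicts $\deg p\leq d<2m$. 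Therefore each $q_{i}$ can be written as $q_{i}\left(x\right)=\left\langle c_{i},\left[x\right]_{\frac{d}{2}}\right\rangle$ for a coefficient vector $c_{i}\in\mathbb{R}^{N}$. Setting $Q:=\sum_{i}c_{i}c_{i}^{\top}\succeq0$, we obtain
$$\left\langle Q\left[x\right]_{\frac{d}{2}},\left[x\right]_{\frac{d}{2}}\right\rangle=\sum_{i}\left\langle c_{i},\left[x\right]_{\frac{d}{2}}\right\rangle^{2}=p\left(x\right).$$

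The only nontrivial step is the degree bound in the ``only if'' direction, and it rests entirely on the observation that the leading forms of the squares cannot cancel over $\mathbb{R}$. The rest of the argument is linear algebra.
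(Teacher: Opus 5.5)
Your proof is correct and complete. The spectral factorization of $Q$ gives the SOS decomposition. In the other direction, the top-degree forms $\sum_i h_i^2$ cannot cancel over $\mathbb{R}$, which forces $\deg q_i\leq\frac{d}{2}$; that is exactly what you need to write each $q_i$ in the monomial basis $\left[x\right]_{\frac{d}{2}}$. The paper gives no proof of its own here and simply cites equation~(4.4) of Parrilo's thesis; your argument is the standard Gram-matrix proof behind that reference.
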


The condition in Theorem~\ref{thm:parrilo} can be written as a system of linear equalities by equating the coefficients of both sides. Thus, deciding whether a polynomial of degree at most an even number $d\geq2$ is SOS can be reduced in time polynomial in $n$ to an SDP. The size of this SDP can be reduced if the polynomial is sparse (see, e.g., \cite{kojima2005sparsity}) or invariant under a group action (see, e.g., \cite{murota2010numerical}).
G\"artner, Magron, and Vallentin \cite{gartner2026sums} recently showed that the \emph{weak} membership problem for the cone of SOS polynomials of degree at most an even number $d\geq2$ can be solved in polynomial time. To the best of our knowledge, the time complexity of the strong membership problem is still unknown.

\newpage

Ahmadi, Olshevsky, Parrilo, and Tsitsiklis \cite{ahmadi2013np} proved that deciding whether a polynomial of degree at most an even number $d\geq4$ is convex is strongly $\operatorname{NP}$-hard. However, a tractable sufficient condition is that it is \emph{SOS-convex}.
    
\begin{definition}
    \label{def:sos-convex}
        A polynomial $p:\mathbb{R}^{n}\to\mathbb{R}$ is called \emph{SOS-convex} if the polynomial $q:\mathbb{R}^{n}\times\mathbb{R}^{n}\to\mathbb{R}$ defined by $q\left(x,y\right):=\left\langle\nabla^{2}p\left(x\right)y,y\right\rangle$ is SOS.
\end{definition}

Ahmadi and Parrilo \cite{ahmadi2013complete} proved that every convex~polynomial of degree at most an even number $d\geq2$ is SOS-convex if and only if $n=1$, $d=2$, or $\left(n,d\right)=\left(2,4\right)$. Since the polynomial $q$ in Definition~\ref{def:sos-convex} is sparse, the size of the SDP in Theorem~\ref{thm:parrilo} can be reduced (see, e.g., \cite{ahmadi2024higher}).

\begin{corollary}
    \label{cor:ahmadi}
    A polynomial $p:\mathbb{R}^{n}\to\mathbb{R}$ of degree at most an even number $d\geq2$ is SOS-convex if and only if there exists a symmetric matrix $Q\succeq0$ such that $$\text{$\left\langle\nabla^{2}p\left(x\right)y,y\right\rangle=\left\langle Q\left(\left[x\right]_{\frac{d}{2}-1}\otimes y\right),\left[x\right]_{\frac{d}{2}-1}\otimes y\right\rangle$,}$$  where $\left[x\right]_{\frac{d}{2}-1}$ denotes the vector of monomials in $x\in\mathbb{R}^{n}$ of degree at most $\frac{d}{2}-1$.
\end{corollary}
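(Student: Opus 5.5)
The plan is to reduce Corollary~\ref{cor:ahmadi} to Theorem~\ref{thm:parrilo}, using the special structure of $q(x,y)=\langle\nabla^{2}p(x)y,y\rangle$: it is quadratic in $y$ and of degree at most $d-2$ in $x$. The ``if'' direction is immediate. If $Q\succeq0$, factor $Q=\sum_{i}v_iv_i^{\top}$ (for instance via a Cholesky or spectral decomposition). Then
\[
q(x,y)=\sum_i\big\langle v_i,[x]_{\frac d2-1}\otimes y\big\rangle^2 ,
\]
and each $\langle v_i,[x]_{\frac d2-1}\otimes y\rangle$ is a polynomial. Hence $q$ is SOS, and $p$ is SOS-convex by Definition~\ref{def:sos-convex}.

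For the ``only if'' direction, suppose $q=\sum_i g_i^2$ with polynomials $g_i(x,y)$. I would show that every $g_i$ is a linear combination of the monomials $x^{\alpha}y_j$ with $|\alpha|\le\frac d2-1$, i.e.\ of the entries of $[x]_{\frac d2-1}\otimes y$. I would use the standard Newton-polytope/degree argument in two steps.

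First, bound the degree in $y$. Because $q$ is homogeneous of degree $2$ in $y$, consider $q(x,ty)=t^2q(x,y)$. Write $g_i=\sum_k g_{i,k}$, where $g_{i,k}$ collects the terms of $g_i$ that are homogeneous of degree $k$ in $y$.
\begin{itemize}
\item Let $K$ be the largest $k$ with some $g_{i,k}\neq0$. The coefficient of $t^{2K}$ in $\sum_i g_i(x,ty)^2$ is $\sum_i g_{i,K}^2$, which is a nonzero SOS. So $2K\le 2$, giving $K\le1$.
\item Similarly, the $t^{0}$ coefficient is $\sum_i g_{i,0}^2$. It must vanish, since $q(x,0)=0$. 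A sum of squares of real polynomials vanishes identically only if each term does, so $g_{i,0}=0$.
\end{itemize}
Thus each $g_i$ is linear in $y$.

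Second, bound the degree in $x$. Since $\deg_x q\le d-2$, the same leading-term argument applies to the highest total $x$-degree $M$ appearing in the $g_i$. The top homogeneous $x$-part of $\sum_i g_i^2$ is a sum of squares of the top parts of the $g_i$, which is a nonzero polynomial. Therefore $2M\le d-2$, i.e.\ $M\le\frac d2-1$.

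Hence $g_i=\langle v_i,[x]_{\frac d2-1}\otimes y\rangle$ for some coefficient vectors $v_i$. Setting $Q=\sum_i v_iv_i^{\top}\succeq0$ gives the claimed identity.

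The main obstacle, modest as it is, is this degree-truncation step. It requires that the leading forms of a sum of squares cannot cancel, because a nonzero sum of squares of real polynomials is a nonzero polynomial. Once that is in place, the rest is a direct rewriting of Theorem~\ref{thm:parrilo} with the reduced monomial basis $[x]_{\frac d2-1}\otimes y$. The coefficient-matching in that rewriting is again a linear system, which is why the SDP size reduction mentioned before the corollary holds.
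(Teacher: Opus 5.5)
Your proof is correct and takes the route the paper indicates. The paper gives no separate proof: it derives the corollary from Theorem~\ref{thm:parrilo} by noting that $q(x,y)=\langle\nabla^{2}p(x)y,y\rangle$ is sparse, citing Ahmadi, Chaudhry, and Zhang for the reduced monomial basis $[x]_{\frac d2-1}\otimes y$. Your homogeneity argument in $y$ and leading-form argument in $x$, both resting on the fact that a sum of squares of real polynomials vanishes only if every summand does, are exactly the Newton-polytope truncation behind that reduction, written out explicitly.
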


A fundamental result by Lasserre \cite{lasserre2009convexity} states that an SOS-convex polynomial optimization problem of degree at most an even number $d\geq2$ can be reduced in time polynomial in $n$ to another SDP.

\begin{theorem}[see Theorem~3.3 in \cite{lasserre2009convexity}]
    \label{thm:lasserre}
    An optimization problem of the form
    \begin{equation}
        \label{eq:sos-convex-polynomial-optimization-problem}
        \begin{aligned}
            \min_{x\in\mathbb{R}^{n}} &&& p\left(x\right) && \\
            \operatorname{s.t.} &&& \text{$q_{i}\left(x\right)\leq0$,} && \text{$i\in\left\{1,\dots,m\right\}$,}
        \end{aligned}
    \end{equation}
    where $p,q_{1},\dots,q_{m}:\mathbb{R}^{n}\to\mathbb{R}$ are SOS-convex polynomials of degree at most an even number $d\geq2$ can be reduced in time polynomial in $n$ to an SDP in the sense that
    \begin{itemize}
        \item if problem~\eqref{eq:sos-convex-polynomial-optimization-problem} has an optimal solution, then the SDP also has an optimal solution,
        \item given an optimal solution to the SDP, an optimal solution to problem~\eqref{eq:sos-convex-polynomial-optimization-problem} can be computed in time polynomial in $n$.
    \end{itemize}
\end{theorem}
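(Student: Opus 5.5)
The plan is to follow Lasserre's moment-relaxation argument: pass to the first-order moment (Lasserre) relaxation of problem~\eqref{eq:sos-convex-polynomial-optimization-problem} at order $\frac{d}{2}$ and show that, under SOS-convexity, it is exact in the sense required. The SDP variable is a vector $y=(y_\alpha)_{|\alpha|\le d}$ of pseudo-moments, indexed by monomials of degree at most $d$. We minimize $L_y(p)$ subject to $y_0=1$, $M_{d/2}(y)\succeq0$, and $L_y(q_i)\le 0$ for all $i$. Here $L_y$ is the Riesz functional $L_y(x^\alpha)=y_\alpha$ and $M_{d/2}(y)$ is the moment matrix indexed by $[x]_{\frac d2}$. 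Its size is $\binom{n+d/2}{d/2}$ and the number of variables is $\binom{n+d}{d}$, both polynomial in $n$ for fixed $d$, and the data are linear in the coefficients of $p,q_i$. So the construction takes time polynomial in $n$.

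The key lemma is a Jensen-type inequality for SOS-convex polynomials. If $g$ is SOS-convex of degree at most $d$ and $y$ satisfies $y_0=1$ and $M_{d/2}(y)\succeq0$, then $L_y(g)\ge g(\bar x)$, where $\bar x:=(L_y(x_1),\dots,L_y(x_n))$ is the first-order moment vector. To prove it, write $g(x)-g(\bar x)-\langle\nabla g(\bar x),x-\bar x\rangle=\int_0^1\!\int_0^t \langle\nabla^2 g(\bar x+s(x-\bar x))(x-\bar x),x-\bar x\rangle\,ds\,dt$. By Definition~\ref{def:sos-convex}, the integrand is an SOS polynomial in $x$ for every fixed $s$, of degree at most $d$. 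Integrating over $s,t$ keeps it in the closed convex cone of SOS polynomials of degree at most $d$. Hence $g(x)-g(\bar x)-\langle\nabla g(\bar x),x-\bar x\rangle=\langle Q[x]_{d/2},[x]_{d/2}\rangle$ with $Q\succeq0$, by Theorem~\ref{thm:parrilo}. Applying $L_y$ gives $L_y(g)-g(\bar x)-0=\operatorname{tr}(Q M_{d/2}(y))\ge0$; the linear term vanishes because $L_y(x-\bar x)=0$ and $y_0=1$. I expect this step to be the main obstacle, specifically checking that the double integral of SOS polynomials stays SOS with a Gram matrix indexed by $[x]_{d/2}$, and handling the degree bookkeeping.

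With the lemma, the two bullets follow. For the first bullet, let $x^\ast$ be optimal for problem~\eqref{eq:sos-convex-polynomial-optimization-problem} and let $y^\ast$ be the moments of the Dirac measure at $x^\ast$. Then $y^\ast$ is feasible for the SDP with value $p(x^\ast)$. Conversely, take any feasible $y$. The lemma applied to each $q_i$ gives $q_i(\bar x)\le L_y(q_i)\le0$, so $\bar x$ is feasible for the original problem. The lemma applied to $p$ then gives $L_y(p)\ge p(\bar x)\ge p(x^\ast)$. Hence the SDP optimum equals $p(x^\ast)$ and is attained at $y^\ast$.

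For the second bullet, take any SDP optimal solution $y$. Then $\bar x$ is feasible for the original problem and $p(\bar x)\le L_y(p)=p(x^\ast)$, so $\bar x$ is optimal. Since $\bar x$ is simply the vector of degree-one entries of $y$, it is read off in linear time.
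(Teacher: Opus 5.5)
Your proposal is correct and essentially matches the cited result. The paper does not reprove the theorem but defers to Theorem~3.3 of Lasserre, whose argument is a moment relaxation of order $\frac{d}{2}$ combined with the Jensen-type inequality $L_y(g)\geq g\left(L_y(x_1),\dots,L_y(x_n)\right)$ for SOS-convex $g$, proved as you do via the integral Taylor remainder and closedness of the cone of SOS polynomials of bounded degree. One small fix in the second bullet: compare $L_y(p)$ with $p(x)$ for an arbitrary feasible $x$, using its Dirac moment vector, rather than with $p(x^\ast)$, so that you do not presuppose that the original problem has an optimal solution.
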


For an explicit expression of the SDP in Theorem~\ref{thm:lasserre}, see problem~(3.3) in \cite{lasserre2009convexity}.

\medskip

To the best of our knowledge, the exact time complexity of an SDP is unknown. Ramana \cite{ramana1997exact} proved~that, in the Turing model, deciding whether an SDP is feasible belongs to $\operatorname{NP}$ if and only if it belongs to $\operatorname{co-NP}$. The proof relies on the construction of a strong dual of a given SDP, which has size polynomial in that of the original SDP. Consequently, the semidefinite feasibility problem cannot be $\operatorname{NP}$-complete, unless $\operatorname{NP}=\operatorname{co-NP}$. As another consequence, if one uses the real-number model of computation of Blum, Shub, and Smale \cite{blum1989theory}, the semidefinite feasibility problem belongs to $\operatorname{NP}\cap\operatorname{co-NP}$. Under some assumptions, an SDP can be solved to arbitrary accuracy in polynomial time using interior-point methods (see, e.g., \cite{klerk2016turing}).

\section{The method of Ahmadi, Chaudhry, and Zhang  in the setting of SOS-convex polynomial constraints}

In this paper, we consider a constrained optimization problem of the form
\begin{equation}
    \label{eq:main-problem}
    \begin{aligned}
        \min_{x\in\mathbb{R}^{n}} &&& f\left(x\right) && \\
        \operatorname{s.t.} &&& \text{$g_{i}\left(x\right)\leq0$,} && \text{$i\in\left\{1,\dots,m\right\}$,}
    \end{aligned}
\end{equation}
where $f:\mathbb{R}^{n}\to\mathbb{R}$ is $d$ times continuously differentiable, and $g_{1},\dots,g_{m}:\mathbb{R}^{n}\to\mathbb{R}$ are SOS-convex polynomials. We denote the feasible set by $$\text{$C:=\left\{x\in\mathbb{R}^{n}:g_{i}\left(x\right)\leq0,\,i\in\left\{1,\dots,m\right\}\right\}$.}$$ We assume that Slater's condition is satisfied, i.e., $\operatorname{int}\left(C\right)\neq\emptyset$, which ensures that the SDP reformulation of problem~\eqref{eq:subproblem} below satisfies strong duality.
We assume that $f$ is strongly convex, and that there exists a (unique) minimizer $x^\ast \in C$.
Given an iterate $x\in\mathbb{R}^{n}$, we first compute the regularization parameter proposed by Ahmadi, Chaudhry, and Zhang \cite{ahmadi2024higher}:
\begin{equation}
    \label{eq:t}
    \begin{aligned}
        t\left(x\right):=\min_{t\in\mathbb{R}} &&& t\\
        \operatorname{s.t.} &&& \text{$y\mapsto T_{x,d}\left(y\right)+t\left\|y-x\right\|^{d^{\prime}}$ is SOS-convex,}\\
        &&& \text{$t\geq0$,}
    \end{aligned}
\end{equation}
where $d^{\prime}$ is the smallest even number greater than $d$. Using Corollary~\ref{cor:ahmadi}, problem~\eqref{eq:t} can be reduced in time polynomial in $n$ to an SDP. Ahmadi, Chaudhry, and Zhang \cite{ahmadi2024higher} define their regularized $d$\textsuperscript{th}-order Taylor expansion $\psi_{x,d}:\mathbb{R}^{n}\to\mathbb{R}$ of $f$ at $x$ by $$\text{$\psi_{x,d}\left(y\right):=T_{x,d}\left(y\right)+t\left(x\right)\left\|y-x\right\|^{d^{\prime}}$.}$$ We now define our next iterate $x^{+}$ as the optimal solution to the problem
\begin{equation}
    \label{eq:subproblem}
    \begin{aligned}
        \min_{y\in\mathbb{R}^{n}} &&& \psi_{x,d}\left(y\right) && \\
        \operatorname{s.t.} &&& \text{$g_{i}\left(y\right)\leq0$,} && \text{$i\in\left\{1,\dots,m\right\}$.}
    \end{aligned}
\end{equation}
By Theorem~\ref{thm:lasserre}, problem~\eqref{eq:subproblem} can be reduced in time polynomial in $n$ to another SDP.

\medskip

\begin{algorithm}[H]
    \caption{$d$\textsuperscript{th}-order Newton method (cf, Algorithm~1 in \cite{ahmadi2024higher})}
    \label{alg:dth-order-newton-method}
    \KwIn{Starting point $x\in\mathbb{R}^{n}$}
    Solve problem~\eqref{eq:t} to compute $t\left(x\right)$\;
    Let $x^{+}$ be the optimal solution to problem~\eqref{eq:subproblem}\;
    Set $x\gets x^{+}$ and repeat\;
\end{algorithm}

\medskip

A few words on the computational complexity of a given iteration. First, we assume that we have access to an oracle that gives each $d$\textsuperscript{th}-order partial derivatives of $f$ at a given $x$ in unit time. This is the usual assumption in the black-box complexity setting, and it means that the coefficients of $T_{x,d}$ can be computed in time polynomial in $n$ for fixed $d$. In Step 1, $t\left(x\right)$ may be obtained to within a fixed accuracy by solving an SDP in the real-number model of computation of Blum, Shub, and Smale \cite{blum1989theory}. Subsequently, in Step 2, problem~\eqref{eq:subproblem} may likewise be solved to fixed accuracy by solving an SDP if the Slater condition holds (see, e.g., \cite{gribling2023note}). The tractability of the algorithm should be understood in this light. Note that it is not really meaningful to discuss the complexity of an iteration in the Turing model, since this would require assumptions on the bit sizes of the partial derivatives of $f$ at a given $x$.

\section{Polynomial integral lower bound}

Let $p:\mathbb{R}\to\mathbb{R}$ be a univariate polynomial of degree at most an even number $d\geq2$. Assume that $p$ is non- negative on $\left[0,1\right]$. That is, $p\left(t\right)\geq0$ for all $t\in\left[0,1\right]$. Using a quadrature rule for integration proposed in \cite{clenshaw1960method} and analyzed in \cite{imhof1963method}, Ahmadi, Chaudhry and Zhang \cite{ahmadi2024higher} proved that $$\text{$\int_{0}^{1}p\left(t\right)dt\geq\frac{1}{2\left(d^{2}-1\right)}p\left(0\right)$.}$$ It is natural to ask whether this lower bound is optimal. In this section, we determine the largest possible constant $\lambda\geq0$ such that every univariate polynomial $p:\mathbb{R}\to\mathbb{R}$ of degree at most $d$ that is nonnegative on $\left[0,1\right]$ satisfies $\int_{0}^{1}p\left(t\right)dt\geq\lambda p\left(0\right)$. Below, we show that the largest possible constant is $\lambda:=\frac{1}{\left(\frac{d}{2}+1\right)^{2}}$.

\begin{theorem}
    \label{thm:polynomial-integral-lower-bound}
    Let $p:\mathbb{R}\to\mathbb{R}$ be a univariate polynomial of degree at most an even number $d\geq2$. Assume that $p$ is nonnegative on $\left[0,1\right]$. That is, $p\left(t\right)\geq0$ for all $t\in\left[0,1\right]$. Then, $$\int_{0}^{1}p\left(t\right)dt\geq\frac{1}{\left(\frac{d}{2}+1\right)^{2}}p\left(0\right).$$ Moreover, this lower bound is optimal.
\end{theorem}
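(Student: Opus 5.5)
Write $d=2k$. The plan is to use the Lukács (Markov–Lukács) representation of polynomials nonnegative on an interval, then reduce to a weighted Christoffel-function bound. Since $\deg p\le 2k$ is even and $p\ge0$ on $[0,1]$, there exist polynomials $a,b$ with $\deg a\le k$ and $\deg b\le k-1$ such that
$$p(t)=a(t)^2+t(1-t)\,b(t)^2 .$$
The second term vanishes at $t=0$ and is nonnegative on $[0,1]$. It therefore only increases the integral without changing $p(0)$, so it suffices to prove
$$\int_0^1 a(t)^2\,dt\ \ge\ \frac{1}{(k+1)^2}\,a(0)^2\qquad\text{for all }a\text{ with }\deg a\le k.$$

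This is exactly the statement that the Christoffel function of Lebesgue measure on $[0,1]$ at degree $k$, evaluated at the endpoint $0$, equals $1/(k+1)^2$. To see this, take the orthonormal shifted Legendre polynomials $\tilde P_j(t)=\sqrt{2j+1}\,P_j(1-2t)$ on $[0,1]$. Then $\tilde P_j(0)^2=2j+1$, because $P_j(1)=1$. Expand $a=\sum_{j=0}^k c_j\tilde P_j$. By Cauchy–Schwarz,
$$a(0)^2=\Bigl(\sum_{j} c_j\tilde P_j(0)\Bigr)^2\le \Bigl(\sum_j c_j^2\Bigr)\Bigl(\sum_{j=0}^k (2j+1)\Bigr)=(k+1)^2\int_0^1 a^2 .$$
This gives the bound with $\lambda=1/(k+1)^2=1/(\tfrac d2+1)^2$.

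For optimality, take the equality case of Cauchy–Schwarz, namely the Christoffel–Darboux kernel
$$a^\ast(t)=\sum_{j=0}^k \tilde P_j(0)\,\tilde P_j(t),$$
and set $p=(a^\ast)^2$. This $p$ has degree $2k=d$ and is nonnegative everywhere. It satisfies $p(0)=\bigl(\sum_j(2j+1)\bigr)^2=(k+1)^4$ and $\int_0^1 p=\sum_j(2j+1)=(k+1)^2$, so the ratio $\int_0^1 p/p(0)$ equals exactly $1/(k+1)^2$. Hence no larger constant works.

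The main obstacle is the first step, justifying the Lukács decomposition with the correct degree bounds ($\deg a\le k$, $\deg b\le k-1$) in the even-degree case. I would cite it as a classical theorem (Lukács / Markov; see Szegő, or Powers–Reznick). If a self-contained argument is needed instead, I would avoid the decomposition as follows. Note that the map $p\mapsto\int_0^1 p-\lambda p(0)$ is a linear functional, and nonnegativity on the cone of polynomials nonnegative on $[0,1]$ is equivalent to a Gauss–Radau quadrature statement. Radau quadrature with fixed node $0$ and $k$ interior nodes is exact for degree $2k$ and has positive weights. Its weight at $0$ is $1/(k+1)^2$, and dropping the nonnegative interior terms gives $\int_0^1 p\ge \tfrac{1}{(k+1)^2}p(0)$. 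This route also needs a small computation, to confirm the Radau endpoint weight, and that computation again reduces to $\sum_{j\le k}(2j+1)=(k+1)^2$.
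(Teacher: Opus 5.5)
Your proof is correct, and after the shared first step (Markov--Luk\'acs, then discarding the $t(1-t)$ term) it follows a different and more self-contained route than the paper.

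The paper proceeds through an SDP:
\begin{itemize}
\item It rewrites $\min\{\int_0^1\sigma : \sigma(0)=1,\ \sigma\text{ SOS}\}$ as an SDP over Gram matrices.
\item It dualizes to $\max\{\lambda : \lambda e_0e_0^\top\preceq H\}$, where $H=\int_0^1[t]_{\frac{d}{2}}[t]_{\frac{d}{2}}^\top dt$ is the Hilbert matrix.
\item It identifies the optimum as $1/(H^{-1})_{00}$ and reads $(H^{-1})_{00}=(\frac{d}{2}+1)^2$ off Choi's closed-form inverse of the Hilbert matrix.
\end{itemize}
Sharpness there comes from strong duality, and no extremal polynomial is written down.

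Your argument computes the same number by other means. The quantity $(H^{-1})_{00}=\sum_{j=0}^{k}\tilde P_j(0)^2$ is the reciprocal Christoffel function at $0$. Your Cauchy--Schwarz inequality is exactly the dual constraint $\lambda e_0e_0^\top\preceq H$, and $(a^\ast)^2$ is an explicit primal optimizer. So you need only $P_j(1)=1$ rather than the whole inverse Hilbert matrix, and optimality is checked by direct evaluation.

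Your Gauss--Radau alternative is also valid. It is the natural sharpening of the Ahmadi--Chaudhry--Zhang argument, which used Clenshaw--Curtis weights to get $\frac{1}{2(d^2-1)}$. The Radau endpoint weight $\frac{1}{(k+1)^2}$ is the largest endpoint weight any positive quadrature exact to degree $2k$ can have, and this route avoids Luk\'acs entirely.

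The paper's formulation has its own advantage: it sits inside the SOS/SDP machinery used throughout the paper and would adapt mechanically to other linear functionals.

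Finally, the degree bookkeeping you flagged as the main obstacle is harmless. Even with only the sum-of-squares form $\sigma+t(1-t)\tau$ that the paper uses, your bound applies to each square summand of $\sigma$ separately, so the single-square form is not needed.
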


\begin{proof}
    Note that it suffices to solve the optimization problem $$
    \begin{aligned}
        \min_{p} &&& \int_{0}^{1}p\left(t\right)dt\\
        \operatorname{s.t.} &&& p\left(0\right)=1\\
         &&& \text{$p$ is nonnegative on $\left[0,1\right]$.}
    \end{aligned}
    $$ By a well-known result from approximation theory called the \emph{Markov--Luk\'acs theorem}, a univariate polynomial $p:\mathbb{R}\to\mathbb{R}$ of degree at most $d$ is nonnegative on $\left[0,1\right]$ if and only if there exist SOS polynomials $\sigma,\tau:\mathbb{R}\to\mathbb{R}$ of degree at most $d$ and $d-2$, respectively, such that $$\text{$p\left(t\right)=\sigma\left(t\right)+t\left(1-t\right)\tau\left(t\right)$.}$$ Therefore, the problem is equivalent to $$
    \begin{aligned}
        \min_{\sigma,\tau} &&& \int_{0}^{1}\left(\sigma\left(t\right)+t\left(1-t\right)\tau\left(t\right)\right)dt\\
        \operatorname{s.t.} &&& \sigma\left(0\right)=1\\
         &&& \text{$\sigma$ and $\tau$ are SOS.}
    \end{aligned}
    $$ Since $t\left(1-t\right)\geq0$ for all $t\in\mathbb{R}$, and every SOS polynomial $\tau:\mathbb{R}\to\mathbb{R}$ of degree at most $d-2$ is nonnegative, every optimal solution $\left(\sigma,\tau\right)$ must satisfy $\tau=0$. Therefore, the problem reduces to $$
    \begin{aligned}
        \min_{\sigma} &&& \int_{0}^{1}\sigma\left(t\right)dt\\
        \operatorname{s.t.} &&& \sigma\left(0\right)=1\\
         &&& \text{$\sigma$ is SOS.}
    \end{aligned}
    $$ By Theorem~\ref{thm:parrilo}, the problem is equivalent to the SDP $$
    \begin{aligned}
        \min_{Q\in\mathcal{S}^{\frac{d}{2}+1}} &&& \int_{0}^{1}\left\langle Q\left[t\right]_{\frac{d}{2}},\left[t\right]_{\frac{d}{2}}\right\rangle dt\\
        \operatorname{s.t.} &&& \left\langle Qe_{0},e_{0}\right\rangle=1\\
         &&& \text{$Q\succeq0$.}
    \end{aligned}
    $$ Note that the SDP is strictly feasible. Therefore, it satisfies strong duality. The dual SDP is $$
    \begin{aligned}
        \max_{\lambda\in\mathbb{R}} &&& \lambda\\
        \operatorname{s.t.} &&& \text{$\lambda e_{0}e_{0}^{\top}\preceq\int_{0}^{1}\left[t\right]_{\frac{d}{2}}\left[t\right]_{\frac{d}{2}}^{\top}dt$.}
    \end{aligned}
    $$ It can be shown that the optimal solution is $$\text{$\lambda:=\frac{1}{\left\langle\left(\int_{0}^{1}\left[t\right]_{\frac{d}{2}}\left[t\right]_{\frac{d}{2}}^{\top}dt\right)^{-1}e_{0},e_{0}\right\rangle}$.}$$ Choi \cite{choi1983tricks} proved that $$\text{$\left(\int_{0}^{1}\left[t\right]_{\frac{d}{2}}\left[t\right]_{\frac{d}{2}}^{\top}dt\right)^{-1}=\left(\left(-1\right)^{k+l}\left(k+l+1\right)\binom{\frac{d}{2}+k+1}{\frac{d}{2}-l}\binom{\frac{d}{2}+l+1}{\frac{d}{2}-k}\binom{k+l}{k}^{2}\right)_{k,l=0}^{\frac{d}{2}}$.}$$ Therefore, $$\text{$\lambda=\frac{1}{\left\langle\left(\int_{0}^{1}\left[t\right]_{\frac{d}{2}}\left[t\right]_{\frac{d}{2}}^{\top}dt\right)^{-1}e_{0},e_{0}\right\rangle}=\frac{1}{\left(\frac{d}{2}+1\right)^{2}}$.}$$
\end{proof}

\section{Proof of local convergence with order $d$ in the setting of SOS-convex polynomial constraints}

Throughout this section, we assume that $f$ satisfies the following regularity conditions.

\begin{assumption}
    \label{ass:strongly-convex}
    $f$ is strongly convex. That is, there exists a constant $\mu>0$ such that $$\text{$\nabla^{2}f\left(x\right)\succeq\mu I$ for all $x\in\mathbb{R}^{n}$.}$$
\end{assumption}

\begin{assumption}
    \label{ass:lipschitz}
    The tensor of $d$\textsuperscript{th}-order partial derivatives of $f$ is Lipschitz continuous. That is, there exists a constant $L\geq0$ such that $$\text{$\left\|\nabla^{d}f\left(y\right)-\nabla^{d}f\left(x\right)\right\|\leq L\left\|y-x\right\|$ for all $x,y\in\mathbb{R}^{n}$.}$$
\end{assumption}

\newpage

Ahmadi, Chaudhry, and Zhang \cite{ahmadi2024higher} proved that problem~\eqref{eq:t} is always feasible. Using analogous arguments, we can prove that problem~\eqref{eq:t} it is always \emph{strictly} feasible.

\begin{lemma}[cf., Lemma~3 in \cite{ahmadi2024higher}]
    Problem~\eqref{eq:t} is strictly feasible.
\end{lemma}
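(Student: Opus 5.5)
Strict feasibility here means there is a $t>0$ for which the SOS-convexity constraint holds strictly, i.e.\ the Gram matrix $Q$ in Corollary~\ref{cor:ahmadi} can be taken positive definite. The proof produces a $t>0$ and an explicit $Q\succ0$ with
$$\left\langle\nabla^{2}_{y}\psi(y)z,z\right\rangle=\left\langle Q\left([y-x]_{\frac{d^{\prime}}{2}-1}\otimes z\right),[y-x]_{\frac{d^{\prime}}{2}-1}\otimes z\right\rangle,\qquad \psi(y):=T_{x,d}(y)+t\|y-x\|^{d^{\prime}}.$$
Since SOS-convexity is invariant under translation, we shift to $u:=y-x$ and work with the monomial vector $[u]_{\frac{d^{\prime}}{2}-1}$. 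With $k:=\frac{d^{\prime}}{2}-1$, we have $d^{\prime}-2=2k$ and $\deg T_{x,d}\le d\le d^{\prime}-1$.

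\textbf{Step 1: the regularizer has a positive definite Gram matrix.} For $h(u):=\|u\|^{d^{\prime}}=\|u\|^{2k+2}$,
$$\left\langle\nabla^{2}h(u)z,z\right\rangle=(2k+2)\|u\|^{2k}\|z\|^{2}+(2k+2)(2k)\|u\|^{2k-2}\langle u,z\rangle^{2}.$$
This form is SOS, but its Gram matrix only involves monomials of top degree $k$ in $u$. To obtain a positive definite matrix on the full basis $[u]_{k}\otimes z$, we add the form
$$\varepsilon\sum_{j=0}^{k}\|u\|^{2j}\|z\|^{2}=\varepsilon\,\left\langle D\left([u]_{k}\otimes z\right),[u]_{k}\otimes z\right\rangle,$$
where $D\succ0$ is diagonal. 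This is exact because $\|u\|^{2j}=\sum_{|\alpha|=j}\binom{j}{\alpha}u^{2\alpha}$ with positive multinomial weights, and $\|z\|^{2}=\sum_i z_i^{2}$. The added term is not part of $\psi$, so it is only a target for comparison: the plan is to show that
$$\left\langle\nabla^{2}T_{x,d}(u)z,z\right\rangle+t\left\langle\nabla^{2}h(u)z,z\right\rangle=\left\langle Q([u]_{k}\otimes z),[u]_{k}\otimes z\right\rangle$$
with $Q$ equal to $tQ_{h}+Q_{T}$, where $Q_{h}$ dominates a matrix that is positive definite on the relevant block structure.

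\textbf{Step 2: absorb the Taylor part.} The Hessian form of $T_{x,d}$ has degree at most $d-2\le 2k-1$ in $u$ and is quadratic in $z$. Each of its monomials is $u^{\gamma}z_iz_j$ with $|\gamma|\le 2k-1$, so it can be written as a symmetric bilinear combination of entries of $[u]_{k}\otimes z$. This gives a symmetric matrix $Q_{T}$ with
$$\left\langle\nabla^{2}T_{x,d}(u)z,z\right\rangle=\left\langle Q_{T}([u]_{k}\otimes z),[u]_{k}\otimes z\right\rangle.$$
Such a $Q_{T}$ is generally indefinite and has entries at all degree levels. This is where the main difficulty lies: $\|u\|^{d^{\prime}}$ contributes only top-degree diagonal mass, while $Q_{T}$ has lower-degree entries. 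These include blocks pairing degree-$0$ with degree-$0$ monomials, whose coefficient is $\nabla^{2}f(x)\succeq\mu I$. That block is positive definite by Assumption~\ref{ass:strongly-convex}.

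\textbf{Step 3: use the Gram matrix freedom.} We do not take $Q_{T}$ as it comes. Instead we exploit the freedom in choosing a Gram matrix: entries can be moved between pairs $(\alpha,\beta)$ and $(\alpha',\beta')$ whenever $\alpha+\beta=\alpha'+\beta'$. The plan is to place every monomial of $u$-degree $s$ on the pair of degrees $(\lfloor s/2\rfloor,\lceil s/2\rceil)$. Then to use a weighted Cauchy--Schwarz/AM--GM argument, $2ab\le\delta a^{2}+\delta^{-1}b^{2}$, to push all off-diagonal mass upward in degree. The base of this induction is the degree-$0$ block, which is $\succeq\mu I$. At each higher level we spend a small amount of the level below and add a new diagonal at the current level. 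At the top level $k$, the required diagonal $cI$ is supplied by $t\,Q_{h}$ for $t$ large; the top blocks of $Q_{h}$ contain $(2k+2)$ times the Gram matrix of $\|u\|^{2k}\|z\|^{2}$, which is positive definite on the degree-$k$ block.

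The intermediate levels $1\le j\le k-1$ are the most delicate part, since $\|u\|^{d^{\prime}}$ gives no diagonal there. For these levels we would first try borrowing via $u^{2\alpha}=u^{\alpha}\cdot u^{\alpha}$ and $u^{\alpha}u^{\beta}$ re-pairings, splitting the degree-$2j$ diagonal as a mix of the pairs $(0,2j)$ and $(j,j)$. If that fails, the fallback is to rebalance $t\|u\|^{2k+2}\ge$ a positive combination of lower even powers on a neighbourhood only — but that is not polynomial-exact, so the primary route is the Gram redistribution. Taking $t$ large then yields $Q\succ0$ with $t>0$, which establishes strict feasibility.
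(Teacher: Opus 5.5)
Your Steps~1 and~2 are correct. Aiming for a Gram matrix $Q\succ0$ with $t>0$ is also a legitimate reading of strict feasibility: it implies that $\psi$ is interior to the SOS-convex cone, which is the notion the paper uses.

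The gap is Step~3, and that is where the entire difficulty lies. For $d\ge4$ (so $k\ge2$), the coefficients of $\langle\nabla^{2}\psi(u)z,z\rangle$ at $u$-degrees $2,\dots,2k-2$ come only from $T_{x,d}$ and are arbitrary. Neither $\nabla^{2}f(x)\succeq\mu I$ (level $0$) nor $t\|u\|^{d'}$ (level $k$) contributes there. Your weighted Cauchy--Schwarz induction needs strictly positive, indeed arbitrarily large, diagonal mass on every intermediate level $1\le j\le k-1$ of the basis $[u]_{k}\otimes z$. That mass must dominate the within-level blocks of $Q_{T}$ and absorb the cross terms with level $0$, where only the fixed $\mu$ is available.

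You never produce this mass:
\begin{itemize}
\item Step~3 ends with ``we would first try \dots\ If that fails, the fallback is \dots''.
\item The pair $(0,2j)$ is not in the basis once $2j>k$. Every such re-pairing also creates new negative off-diagonal entries, which must in turn be paid for by other intermediate levels.
\item The term $\varepsilon\sum_{j}\|u\|^{2j}\|z\|^{2}$ of Step~1 is not part of $\psi$. Using it amounts to showing that $\langle\nabla^{2}\psi(u)z,z\rangle-\varepsilon\sum_{j}\|u\|^{2j}\|z\|^{2}$ is SOS, which is the same problem.
\end{itemize}
So no $Q\succ0$ has been exhibited. For $d\le3$ there are no intermediate levels, and your plan does work there.

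The paper avoids this bookkeeping with a scaling argument. It substitutes $y=x+\alpha u$ and divides by $\alpha^{2}$. The terms of $T_{x,d}$ of degree at least $3$ then become coefficientwise $O(\alpha)$, and the quadratic part with $\nabla^{2}f(x)-\mu I\succeq0$ is SOS-convex. The intermediate-level slack comes from Lemma~2 of Ahmadi, Chaudhry, and Zhang: $\|u\|^{2}+\|u\|^{d'}$ is interior to the SOS-convex cone. Undoing the scaling gives $t=\frac{\mu}{2}\alpha^{2-d'}$. In Gram terms, the scaling reweights level $j$ by $\alpha^{j}$.

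Your route can also be completed, and contrary to your remark, the ``fallback'' is polynomial-exact.
\begin{itemize}
\item With $s=\|u\|^{2}$, weighted AM--GM gives $\frac{a}{2}+\frac{b}{2}s^{k}-\sum_{j=1}^{k-1}c_{j}s^{j}\ge0$ on $[0,\infty)$ for $c_{j}=\frac{2}{k-1}\left(\frac{a}{2}\right)^{1-j/k}\left(\frac{b}{2}\right)^{j/k}$.
\item Such a polynomial equals $\sigma_{0}(s)+s\,\sigma_{1}(s)$ with $\sigma_{0},\sigma_{1}$ SOS of degrees at most $k$ and $k-1$. Substituting $s=\|u\|^{2}$ and multiplying by $\|z\|^{2}$ gives an SOS biform in the basis $[u]_{k}\otimes z$.
\item Hence $a\|z\|^{2}+b\|u\|^{2k}\|z\|^{2}$ has a Gram matrix $\succeq\sum_{j=0}^{k}c_{j}D_{j}$, with $c_{0}=\frac{a}{2}$ and $c_{k}=\frac{b}{2}$. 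Here $D_{j}$ is the diagonal Gram matrix of $\|u\|^{2j}\|z\|^{2}$, positive definite on level $j$ and zero elsewhere.
\item Take $a=\frac{\mu}{2}$ and $b=(2k+2)t$. Then every $c_{j}$ with $j\ge1$ tends to infinity with $t$.
\item All blocks of $Q_{T}$ other than the level-$0$ block are fixed and touch some level $j\ge1$, so your domination argument goes through for large $t$.
\end{itemize}
But this is the missing step, not a detail.
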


\begin{proof}
    Let $C_{x,d}$ be the sum of terms of $T_{x,d}$ of degree at least $3$. Note that $$\text{$\frac{1}{\alpha^{2}}C_{x,d}\left(x+\alpha u\right)\to0$ as $\alpha\downarrow0$.}$$ By Lemma~2 in \cite{ahmadi2024higher}, the polynomial $u\mapsto\left\|u\right\|^{2}+\left\|u\right\|^{d^{\prime}}$ lies in the interior of the cone of SOS-convex poly- nomials of degree at most $d^{\prime}$. Therefore, there exists a scalar $\alpha>0$ such that the polynomial $$u\mapsto\frac{1}{\alpha^{2}}C_{x,d}\left(x+\alpha u\right)+\frac{\mu}{2}\left(\left\|u\right\|^{2}+\left\|u\right\|^{d^{\prime}}\right)$$ also lies in the interior of this cone. Note that the quadratic function $$u\mapsto f\left(x\right)+\alpha\left\langle\nabla f\left(x\right),u\right\rangle+\frac{\alpha^{2}}{2}\left\langle\left(\nabla^{2}f\left(x\right)-\mu I\right)u,u\right\rangle$$ is convex. Therefore, it is SOS-convex. Therefore, the polynomial
    $$
    \begin{aligned}
        u &\mapsto \frac{1}{\alpha^{2}}T_{x,d}\left(x+\alpha u\right)+\frac{\mu}{2}\left\|u\right\|^{d^{\prime}}\\
         &= \frac{1}{\alpha^{2}}\left(f\left(x\right)+\alpha\left\langle\nabla f\left(x\right),u\right\rangle+\frac{\alpha^{2}}{2}\left\langle\nabla^{2}f\left(x\right)u,u\right\rangle\right)+\frac{1}{\alpha^{2}}C_{x,d}\left(x+\alpha u\right)+\frac{\mu}{2}\left\|u\right\|^{d^{\prime}}\\
         &= \frac{1}{\alpha^{2}}\left(f\left(x\right)+\alpha\left\langle\nabla f\left(x\right),u\right\rangle+\frac{\alpha^{2}}{2}\left\langle\left(\nabla^{2}f\left(x\right)-\mu I\right)u,u\right\rangle\right)+\frac{1}{\alpha^{2}}C_{x,d}\left(x+\alpha u\right)+\frac{\mu}{2}\left(\left\|u\right\|^{2}+\left\|u\right\|^{d^{\prime}}\right)
    \end{aligned}
    $$ lies in the interior of the cone of SOS-convex polynomials of degree at most $d^{\prime}$. By a scaling and an affine change of variables, the polynomial $$y\mapsto T_{x,d}\left(y\right)+\frac{\mu}{2}\frac{1}{\alpha^{d^{\prime}-2}}\left\|y-x\right\|^{d^{\prime}}$$ also lies in the interior of this cone. Hence, the value $t:=\frac{\mu}{2}\frac{1}{\alpha^{d^{\prime}-2}}$ is strictly feasible.
\end{proof}

Ahmadi, Chaudhry, and Zhang \cite{ahmadi2024higher} proved that the function $x\mapsto t\left(x\right)$ is bounded on any compact set. Since problem~\eqref{eq:t} is strictly feasible, Theorem~4.2 by Bonnans and Shapiro \cite{bonnans1998optimization} implies that the function $x\mapsto t\left(x\right)$ is continuous. Below, we give a self-contained proof without using this theorem.

\begin{lemma}
    The function $x\mapsto t\left(x\right)$ is continuous.
\end{lemma}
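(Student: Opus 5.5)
Write $K$ for the closed convex cone of SOS-convex polynomials of degree at most $d^{\prime}$ in $n$ variables, and $\phi_{x}(y):=T_{x,d}(y)$. The feasible set of problem~\eqref{eq:t} is $F(x):=\{t\geq0: T_{x,d}+t\|\cdot-x\|^{d^{\prime}}\in K\}$. This set is a closed interval $[t(x),\infty)$. It is upward closed because $\|\cdot-x\|^{d^{\prime}}\in K$ and $K$ is a convex cone. It is closed because $K$ is closed: by Corollary~\ref{cor:ahmadi}, $K$ is a linear image of the PSD cone, and it is also the preimage of the closed SOS cone under the linear map $p\mapsto\langle\nabla^{2}p(x)y,y\rangle$. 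I will prove lower and upper semicontinuity of $t$ at a fixed point $\bar x$ separately, using that the coefficients of $T_{x,d}$ and of $\|y-x\|^{d^{\prime}}$, as polynomials in $y$, depend continuously on $x$. This holds because $f$ is $d$ times continuously differentiable.

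\emph{Lower semicontinuity.} Take $x_{k}\to\bar x$ with $t(x_{k})\to\tau:=\liminf t(x_{k})$. If $\tau=\infty$ there is nothing to show. Otherwise the polynomials $T_{x_{k},d}+t(x_{k})\|\cdot-x_{k}\|^{d^{\prime}}\in K$ converge coefficientwise to $T_{\bar x,d}+\tau\|\cdot-\bar x\|^{d^{\prime}}$. Since $K$ is closed, the limit lies in $K$, so $\tau\in F(\bar x)$ and $t(\bar x)\leq\tau$.

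\emph{Upper semicontinuity.} This is the main step, and it uses the strict feasibility lemma just proved. Fix $\varepsilon>0$. I claim that $s:=t(\bar x)+\varepsilon$ satisfies $P_{\bar x}:=T_{\bar x,d}+s\|\cdot-\bar x\|^{d^{\prime}}\in\operatorname{int}K$. By the previous lemma there is a $t_{0}$ with $T_{\bar x,d}+t_{0}\|\cdot-\bar x\|^{d^{\prime}}\in\operatorname{int}K$; let $t_{1}:=T_{\bar x,d}+t(\bar x)\|\cdot-\bar x\|^{d^{\prime}}\in K$ denote the polynomial at the optimum.
\begin{itemize}
\item If $s\geq t_{0}$, then $P_{\bar x}$ is an interior point plus $(s-t_{0})\|\cdot-\bar x\|^{d^{\prime}}\in K$, hence interior.
\item If $s<t_{0}$, then $P_{\bar x}$ is a strict convex combination of the polynomial at $t(\bar x)$, which lies in $K$, and the polynomial at $t_{0}$, which lies in $\operatorname{int}K$. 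Such a combination is interior.
\end{itemize}
Every $P\in\operatorname{int}K$ has a coefficient neighborhood contained in $K$. By continuity of the coefficients in $x$, $T_{x,d}+s\|\cdot-x\|^{d^{\prime}}$ lies in that neighborhood for $x$ near $\bar x$. Hence $t(x)\leq t(\bar x)+\varepsilon$ near $\bar x$.

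Combining the two parts gives continuity of $t$ at $\bar x$. I expect the only delicate points to be the closedness of $K$ and the interior-point argument, which replaces the Bonnans--Shapiro theorem.
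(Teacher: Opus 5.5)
Your proof is correct and follows essentially the same route as the paper. Strict feasibility places $T_{\bar x,d}+(t(\bar x)+\varepsilon)\|\cdot-\bar x\|^{d'}$ in the interior of the SOS-convex cone. Closedness of the cone handles the other side; the paper phrases this as the polynomial at $t(\bar x)-\varepsilon$ lying in the open exterior, where you use a sequential lower-semicontinuity argument. Continuity of the coefficients in $x$ then transfers both facts to nearby points, and you additionally justify two points the paper only asserts: why the specific value $t(\bar x)+\varepsilon$, and not merely some strictly feasible $t_0$, gives an interior point, and why the cone is closed.
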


\begin{proof}
    Let $\varepsilon>0$. Since problem~\eqref{eq:t} is strictly feasible, the polynomials $$\text{$z\mapsto T_{x,d}\left(z\right)+\left(t\left(x\right)+\varepsilon\right)\left\|z-x\right\|^{d^{\prime}}$ and $z\mapsto T_{x,d}\left(z\right)+\left(t\left(x\right)-\varepsilon\right)\left\|z-x\right\|^{d^{\prime}}$}$$ lie in the interior and exterior, respectively, of the cone of SOS-convex polynomials of degree at most $d^{\prime}$. By continuity, there exists a $\delta>0$ such that, for every $y\in B\left(x,\delta\right)$, the polynomials $$\text{$z\mapsto T_{y,d}\left(z\right)+\left(t\left(x\right)+\varepsilon\right)\left\|z-y\right\|^{d^{\prime}}$ and $z\mapsto T_{y,d}\left(z\right)+\left(t\left(x\right)-\varepsilon\right)\left\|z-y\right\|^{d^{\prime}}$}$$ also lie in the interior and exterior, respectively, of this cone. Therefore, $$\text{$t\left(x\right)-\varepsilon<t\left(y\right)<t\left(x\right)+\varepsilon$.}$$ Hence, the function $x\mapsto t\left(x\right)$ is continuous.
\end{proof}

Ahmadi, Chaudhry, and Zhang \cite{ahmadi2024higher} proved that, in the unconstrained setting, problem~\eqref{eq:subproblem} always has a unique optimal solution. We extend their proof to our setting.

\begin{lemma}[cf., Lemma~5 in \cite{ahmadi2024higher}]
    Problem~\eqref{eq:subproblem} has a unique optimal solution.
\end{lemma}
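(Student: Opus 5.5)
We have to show that problem~\eqref{eq:subproblem} has an optimal solution and that this solution is unique. The plan is: the feasible set is nonempty, closed and convex; $\psi_{x,d}$ is convex and coercive; so a minimizer exists. Uniqueness then comes from strict convexity of $\psi_{x,d}$.

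\emph{Feasible set.} $C$ is nonempty by Slater's condition. Each $g_i$ is SOS-convex, hence convex, since its Hessian quadratic form is SOS and therefore nonnegative. So $C$ is an intersection of sublevel sets of continuous convex functions, and is closed and convex.

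\emph{Convexity of $\psi_{x,d}$.} By definition of $t(x)$ as an optimal value, $\psi_{x,d}$ is SOS-convex. Here I would note that the feasible set of~\eqref{eq:t} is closed, because the SOS-convex cone is closed. So the minimum is attained and $t(x)$ is feasible. In particular, $\nabla^2\psi_{x,d}(y)\succeq 0$ for all $y$.

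\emph{Strict convexity, the main step.} I would show that $\nabla^2\psi_{x,d}(y)\succ0$ everywhere. Fix $y$ and a unit vector $u$, and set $h(s):=\langle\nabla^2\psi_{x,d}(x+s(y-x))u,u\rangle$. This is a univariate polynomial of degree at most $d'-2\le d$, and it is nonnegative on $\mathbb{R}$ (in particular on $[0,1]$). Theorem~\ref{thm:polynomial-integral-lower-bound}, applied with $d'-2$ or $d$ as the even degree bound, gives
\[
\int_0^1 h(s)\,ds\ \ge\ \lambda\, h(0)=\lambda\,\langle\nabla^2 f(x)u,u\rangle\ \ge\ \lambda\mu>0.
\]
This uses that the regularization term has zero Hessian at $y=x$ (since $d'\ge4$) and that $\nabla^2 T_{x,d}(x)=\nabla^2 f(x)$. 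It only gives a positive average, though. To get pointwise positivity I would instead apply the same argument with the roles of $x$ and $y$ reversed, which is the main obstacle.

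A simpler route suffices for strict convexity. If $\langle\nabla^2\psi_{x,d}(z)u,u\rangle=0$ held along a whole segment in direction $u$, then the polynomial $s\mapsto\psi_{x,d}(z+su)$ would be affine on that segment, hence affine on all of $\mathbb{R}$. Comparing with the point $x$ via the integral bound rules this out. More directly: convexity together with $\nabla^2\psi_{x,d}(x)=\nabla^2 f(x)\succeq\mu I$ means every univariate restriction $s\mapsto\psi_{x,d}(y+sv)$ is a convex polynomial. Such a polynomial is either strictly convex or affine. If it were affine, its second derivative would vanish identically, and along the line through $x$ this contradicts $\mu>0$. For lines not through $x$, I would use the integral bound on the triangle of segments from $x$ to show that strict convexity along segments through $x$ forces strict convexity along $y+sv$. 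If this becomes delicate, I would fall back on the leading term. For $t(x)>0$, the top-degree part $t(x)\|y-x\|^{d'}$ makes each nonconstant-direction restriction a polynomial of degree $d'\ge 4$, hence not affine. For $t(x)=0$, $\psi_{x,d}=T_{x,d}$ is a convex polynomial of degree at most $d$ whose Hessian at $x$ is positive definite, so no restriction to a line can be affine.

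\emph{Coercivity and conclusion.} A convex polynomial that is not affine on any line has bounded sublevel sets. Indeed, a convex function with an unbounded sublevel set has a recession direction $v$, and along that direction the restriction is bounded above. A convex polynomial bounded above along a ray is constant along that line, which contradicts the previous step. Hence $\psi_{x,d}$ is continuous and coercive on the closed nonempty set $C$, so a minimizer exists by Weierstrass. Since $\psi_{x,d}$ is strictly convex and $C$ is convex, the minimizer is unique: if there were two, the midpoint would have a strictly smaller value.
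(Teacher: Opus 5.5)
Your argument is complete when $t(x)>0$. For $v\neq0$ the restriction $s\mapsto\psi_{x,d}(y+sv)$ is a convex polynomial of exact degree $d'$, with leading coefficient $t(x)\|v\|^{d'}$, so it is strictly convex, and your recession-direction argument then gives bounded sublevel sets. One small correction there: a convex polynomial bounded above on a ray is \emph{affine} with nonpositive slope (e.g.\ $s\mapsto-s$), not necessarily constant; affine is all you need.

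The gap is the case $t(x)=0$. There you assert that $T_{x,d}$, being convex with $\nabla^{2}T_{x,d}(x)\succeq\mu I$, is affine on no line. But positive definiteness of the Hessian at the single point $x$ only controls lines through $x$. Lines $y+sv$ that miss $x$ are exactly the case you had just called delicate, and nothing in the proposal settles it. The claim is true, since a finite convex function that is affine along one line is affine along every parallel line. As written, though, the step is unproved, and both your coercivity and your uniqueness arguments rest on it.

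The cleanest repair is to reverse the order of your last two steps, which is also how the paper proceeds.
\begin{itemize}
\item First get coercivity using only lines through $x$. If some sublevel set were unbounded, so would be one at a level at least $\psi_{x,d}(x)$, which contains $x$. A recession direction $v\neq0$ of that closed convex set makes $s\mapsto\psi_{x,d}(x+sv)$ a convex polynomial bounded above on $[0,\infty)$, hence affine. This contradicts $\langle\nabla^{2}\psi_{x,d}(x)v,v\rangle\geq\mu\|v\|^{2}>0$.
\item Then note that a coercive function cannot be affine on any line. So every line restriction is strictly convex, and uniqueness follows with no case split on $t(x)$.
\end{itemize}
The paper obtains coercivity quantitatively instead, from Theorem~\ref{thm:polynomial-integral-lower-bound} applied along segments from $x$:
\[
\psi_{x,d}(y)\geq f(x)+\langle\nabla f(x),y-x\rangle+\frac{\mu}{2(d'/2)^{2}}\|y-x\|^{2}.
\]
It then argues uniqueness directly: two minimizers would make $\psi_{x,d}$ constant on a segment, hence on the whole line since it is a polynomial, contradicting coercivity.

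Finally, the pointwise target $\nabla^{2}\psi_{x,d}(y)\succ0$ for all $y$, which you called the main obstacle, is false in general, because $t(x)$ is chosen minimal. For $n=1$ and $d=3$ one has $t(x)=f'''(x)^{2}/(48f''(x))$. Then $\psi_{x,3}''(y)=f''(x)+f'''(x)(y-x)+12t(x)(y-x)^{2}$ has zero discriminant and vanishes at $y=x-2f''(x)/f'''(x)$ whenever $f'''(x)\neq0$. Strict convexity, not a positive definite Hessian, is the right target.
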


\begin{proof}
    By the fundamental theorem of calculus and Theorem~\ref{thm:polynomial-integral-lower-bound}, $$
    \begin{aligned}
        \psi_{x,d}\left(y\right) &= \psi_{x,d}\left(x\right)+\left\langle\nabla\psi_{x,d}\left(x\right),y-x\right\rangle +\int_{0}^{1}\int_{0}^{t}\left\langle\nabla^{2}\psi_{x,d}\left(x+s\left(y-x\right)\right)\left(y-x\right),y-x\right\rangle ds\,dt\\
         &= \psi_{x,d}\left(x\right)+\left\langle\nabla\psi_{x,d}\left(x\right),y-x\right\rangle +\int_{0}^{1}t\int_{0}^{1}\left\langle\nabla^{2}\psi_{x,d}\left(x+st\left(y-x\right)\right)\left(y-x\right),y-x\right\rangle ds\,dt\\
         &\geq \psi_{x,d}\left(x\right)+\left\langle\nabla\psi_{x,d}\left(x\right),y-x\right\rangle +\int_{0}^{1}t\left(\frac{1}{\left(\frac{d^{\prime}}{2}\right)^{2}}\left\langle\nabla^{2}\psi_{x,d}\left(x\right)\left(y-x\right),y-x\right\rangle\right)dt\\
         &= \psi_{x,d}\left(x\right)+\left\langle\nabla\psi_{x,d}\left(x\right),y-x\right\rangle+\frac{1}{\left(\frac{d^{\prime}}{2}\right)^{2}}\frac{1}{2}\left\langle\nabla^{2}\psi_{x,d}\left(x\right)\left(y-x\right),y-x\right\rangle\\
         &= f\left(x\right)+\left\langle\nabla f\left(x\right),y-x\right\rangle+\frac{1}{\left(\frac{d^{\prime}}{2}\right)^{2}}\frac{1}{2}\left\langle\nabla^{2}f\left(x\right)\left(y-x\right),y-x\right\rangle\\
         &\geq \text{$f\left(x\right)+\left\langle\nabla f\left(x\right),y-x\right\rangle+\frac{1}{\left(\frac{d^{\prime}}{2}\right)^{2}}\frac{\mu}{2}\left\|y-x\right\|^{2}\to\infty$ as $\left\|y-x\right\|\to\infty$.}
    \end{aligned}
    $$ Hence, $\psi_{x,d}$ is coercive. Therefore, problem~\eqref{eq:subproblem} has at least one optimal solution.
    
    Suppose that problem~\eqref{eq:subproblem} has two different optimal solutions $\bar{x}$ and $\bar{y}$. Then, by convexity, the point $\bar{x}+t\left(\bar{y}-\bar{x}\right)$ is an optimal solution for every $t\in\left[0,1\right]$. Since $\psi_{x,d}$ is a polynomial, it therefore follows that $\psi_{x,d}\left(\bar{x}+t\left(\bar{y}-\bar{x}\right)\right)=\psi_{x,d}\left(\bar{x}\right)$ for all $t\in\mathbb{R}$, which contradicts the fact that $\psi_{x,d}$ is coercive. Therefore, the optimal solution must be unique.
\end{proof}

We state our first conclusion below.

\begin{theorem}[cf., Theorem~3 in \cite{ahmadi2024higher}]
    \label{thm:well-definedness}
    Assume that $f$ is strongly convex (Assumption~\ref{ass:strongly-convex}). Then, Algorithm~\ref{alg:dth-order-newton-method} is well defined in the sense that, for every iterate $x\in\mathbb{R}^{n}$,
    \begin{itemize}
        \item problem~\eqref{eq:t} is feasible,
        \item problem~\eqref{eq:subproblem} has a unique optimal solution.
    \end{itemize}
\end{theorem}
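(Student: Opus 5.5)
The theorem just assembles the three lemmas proved immediately before it, so the plan is to check that each bullet follows from one of them and that the hypotheses line up. For the first bullet, I would invoke the lemma stating that problem~\eqref{eq:t} is strictly feasible. Strict feasibility gives feasibility at once. That lemma uses only Assumption~\ref{ass:strongly-convex}, through the convexity of the quadratic with matrix $\nabla^{2}f\left(x\right)-\mu I$, together with Lemma~2 of \cite{ahmadi2024higher}, so it holds at every $x\in\mathbb{R}^{n}$.

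I would then record that the minimum in problem~\eqref{eq:t} is attained, so that $\psi_{x,d}$ is well defined and SOS-convex. The feasible set of problem~\eqref{eq:t} is closed, since the cone of SOS-convex polynomials of degree at most $d^{\prime}$ is closed as the image of the PSD cone under the linear map in Corollary~\ref{cor:ahmadi}. The feasible set is also bounded below by $0$ and nonempty. Hence the infimum $t\left(x\right)$ is feasible, and $\psi_{x,d}$ is SOS-convex, in particular convex.

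For the second bullet, I would apply the lemma stating that problem~\eqref{eq:subproblem} has a unique optimal solution. Its proof needs three ingredients:
\begin{itemize}
\item convexity of $\psi_{x,d}$, which makes the function $s\mapsto\left\langle\nabla^{2}\psi_{x,d}\left(x+s\left(y-x\right)\right)\left(y-x\right),y-x\right\rangle$ a univariate polynomial of degree at most $d^{\prime}-2$ that is nonnegative on $\left[0,1\right]$, so that Theorem~\ref{thm:polynomial-integral-lower-bound} applies;
\item the identity $\nabla^{2}\psi_{x,d}\left(x\right)=\nabla^{2}f\left(x\right)\succeq\mu I$, which holds because the regularizer has degree $d^{\prime}\geq4$ and therefore contributes nothing to the Hessian at $x$;
\item nonemptiness and closedness of $C$; Slater's condition gives nonemptiness, and $C$ is closed by continuity of the $g_{i}$.
\end{itemize}
Coercivity together with closedness of $C$ gives existence of a minimizer. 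Convexity of $C$ follows from SOS-convexity of the $g_{i}$. Uniqueness then follows from the argument already given: a flat segment of minimizers would force the polynomial $\psi_{x,d}$ to be constant on a whole line, contradicting coercivity.

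The main obstacle is bookkeeping rather than any new estimate. Specifically, I must confirm that the constant from Theorem~\ref{thm:polynomial-integral-lower-bound} is applied with the right degree. The polynomial in question has degree $d^{\prime}-2$, which is even, so the constant is $1/\left(\frac{d^{\prime}}{2}\right)^{2}$. I must also confirm that attainment in problem~\eqref{eq:t} holds, so that the lemmas really apply with $t=t\left(x\right)$. With these checks made, the theorem follows by citing the lemmas.
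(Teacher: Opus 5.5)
Your proposal is correct and follows the paper, which presents this theorem simply as the conjunction of the strict-feasibility lemma and the unique-optimal-solution lemma proved just before it; your explicit check that the minimum in problem~\eqref{eq:t} is attained is a point the paper leaves implicit. One small repair there: a linear image of the PSD cone need not be closed in general, so you should add that no nonzero $Q\succeq0$ is sent to the zero polynomial (true because the entries of $\left[x\right]_{\frac{d^{\prime}}{2}-1}\otimes y$ are distinct monomials), which rules out unbounded Gram matrices along a convergent sequence of images and so gives the closedness you need.
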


To prove that the method of Ahmadi, Chaudhry, and Zhang \cite{ahmadi2024higher} converges locally to the optimal solution with order $d$, we need the following lemma.

\begin{lemma}[cf., Lemma~6 in \cite{ahmadi2024higher}]
    \label{lem:hessian-psi}
    For all $x,y\in\mathbb{R}^{n}$ satisfying $\left\|y-x\right\|\leq r$, where $r:=\left(\frac{\left(d-1\right)!\mu}{2L}\right)^{\frac{1}{d-1}}$, $$\text{$\nabla^{2}\psi_{x,d}\left(y\right)\succeq\frac{\mu}{2}I$.}$$
\end{lemma}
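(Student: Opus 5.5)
We split $\nabla^{2}\psi_{x,d}\left(y\right)$ into a Hessian of $f$, a remainder term, and a regularization term, and bound each separately. Since $\psi_{x,d}=T_{x,d}+t\left(x\right)\left\|\cdot-x\right\|^{d^{\prime}}$ and $T_{x,d}=f-R_{x,d}$, we have
$$\nabla^{2}\psi_{x,d}\left(y\right)=\nabla^{2}f\left(y\right)-\nabla^{2}R_{x,d}\left(y\right)+t\left(x\right)\nabla^{2}\left(\left\|y-x\right\|^{d^{\prime}}\right).$$

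First, the regularization term is positive semidefinite. Indeed, $t\left(x\right)\geq0$ by the constraint in problem~\eqref{eq:t}. Moreover, $z\mapsto\left\|z-x\right\|^{d^{\prime}}$ is convex, being an even power (with $d^{\prime}\geq2$) of a nonnegative convex function. So its Hessian is positive semidefinite everywhere and this term can be dropped from the lower bound.

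Second, Assumption~\ref{ass:strongly-convex} gives $\nabla^{2}f\left(y\right)\succeq\mu I$.

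Third, we control the remainder using Theorem~\ref{thm:baes} with $k=2$, which applies under Assumption~\ref{ass:lipschitz}:
$$\left\|\nabla^{2}R_{x,d}\left(y\right)\right\|\leq\frac{L}{\left(d-1\right)!}\left\|y-x\right\|^{d-1}.$$
Since $\nabla^{2}R_{x,d}\left(y\right)$ is a symmetric matrix, its tensor norm is its spectral norm. Hence $-\nabla^{2}R_{x,d}\left(y\right)\succeq-\frac{L}{\left(d-1\right)!}\left\|y-x\right\|^{d-1}I$.

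Combining the three bounds, for $\left\|y-x\right\|\leq r$ we get
$$\nabla^{2}\psi_{x,d}\left(y\right)\succeq\left(\mu-\frac{L}{\left(d-1\right)!}r^{d-1}\right)I=\left(\mu-\frac{\mu}{2}\right)I=\frac{\mu}{2}I,$$
using $r^{d-1}=\frac{\left(d-1\right)!\mu}{2L}$ by the choice of $r$. Two edge cases need care. If $L=0$, read $r=\infty$; the remainder bound then vanishes and the claim holds for all $y$. For $d=2$, Theorem~\ref{thm:baes} with $k=2$ still applies, giving the exponent $d-1=1$.

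The only real point to justify is the identification of the symmetric tensor norm with the operator bound in the Loewner order. By the definition in Section~2, this follows from $\left|\left\langle A u,u\right\rangle\right|\leq\left\|A\right\|\left\|u\right\|^{2}$ for symmetric $A$. Everything else is immediate.
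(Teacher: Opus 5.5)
Your proof is correct. Decomposing $\nabla^{2}\psi_{x,d}(y)=\nabla^{2}f(y)-\nabla^{2}R_{x,d}(y)+t(x)\nabla^{2}\|y-x\|^{d^{\prime}}$, dropping the positive semidefinite regularization term, and bounding the remainder via Theorem~\ref{thm:baes} with $k=2$ is the standard argument. The paper gives no proof of its own here and defers to Lemma~6 of \cite{ahmadi2024higher}, whose argument is essentially this one.
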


Ahmadi, Chaudhry, and Zhang \cite{ahmadi2024higher} proved that, in the unconstrained setting, their method converges locally to the optimal solution $x^{\ast}$ with order $d$. We will now extend their proof to the constrained setting.

\begin{theorem}[cf., Theorem~4 in \cite{ahmadi2024higher}]
    \label{thm:convergence}
    Assume that $f$ is strongly convex (Assumption~\ref{ass:strongly-convex}) and that the tensor of $d$\textsuperscript{th}-order partial derivatives of $f$ is Lipschitz continuous (Assumption~\ref{ass:lipschitz}). Then, for every iterate $x\in\mathbb{R}^{n}$ satisfying $\left\|x-x^{\ast}\right\|\leq r$, where $r:=\left(\frac{\left(d-1\right)!\mu}{2L}\right)^{\frac{1}{d-1}}$, the next iterate $x^{+}$ of Algorithm~\ref{alg:dth-order-newton-method} satisfies $$\text{$\left\|x^{+}-x^{\ast}\right\|\leq\left(\frac{d^{\prime}}{2}\right)^{2}\frac{2}{\mu}\left(\frac{L}{d!}+\left(\max_{\left\|z-x^{\ast}\right\|\leq r}t\left(z\right)\right)d^{\prime}\max\left\{r,1\right\}\right)\left\|x-x^{\ast}\right\|^{d}$.}$$
\end{theorem}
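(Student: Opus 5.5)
The plan is to compare the first-order optimality conditions of problem~\eqref{eq:main-problem} at $x^{\ast}$ and of problem~\eqref{eq:subproblem} at $x^{+}$. Both problems are convex with the same feasible set $C$: $f$ is strongly convex, $\psi_{x,d}$ is SOS-convex and hence convex, and the $g_{i}$ are convex. Therefore $x^{\ast}$ and $x^{+}$ satisfy the variational inequalities
$$\left\langle\nabla f\left(x^{\ast}\right),y-x^{\ast}\right\rangle\geq0\quad\text{and}\quad\left\langle\nabla\psi_{x,d}\left(x^{+}\right),y-x^{+}\right\rangle\geq0\quad\text{for all } y\in C.$$
Take $y:=x^{+}$ in the first inequality and $y:=x^{\ast}$ in the second, and add the two. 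With $h:=x^{\ast}-x^{+}$ this gives
$$\left\langle\nabla\psi_{x,d}\left(x^{\ast}\right)-\nabla\psi_{x,d}\left(x^{+}\right),h\right\rangle\leq\left\langle\nabla\psi_{x,d}\left(x^{\ast}\right)-\nabla f\left(x^{\ast}\right),h\right\rangle.$$
The proof then reduces to a lower bound on the left-hand side of the form $c\left\|h\right\|^{2}$ and an upper bound on the right-hand side of the form $C\left\|x-x^{\ast}\right\|^{d}\left\|h\right\|$.

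\textbf{Lower bound (main obstacle).} We have no control over $\left\|x^{+}-x\right\|$, so Lemma~\ref{lem:hessian-psi} cannot be applied along the whole segment $[x^{+},x^{\ast}]$. The fix is the one used in the uniqueness lemma. Write the left-hand side as $\int_{0}^{1}p\left(s\right)ds$, where
$$p\left(s\right):=\left\langle\nabla^{2}\psi_{x,d}\left(x^{\ast}-sh\right)h,h\right\rangle .$$
Since $\psi_{x,d}$ is SOS-convex of degree $d^{\prime}$, $p$ is a univariate polynomial of even degree at most $d^{\prime}-2$ that is nonnegative everywhere, in particular on $[0,1]$. Theorem~\ref{thm:polynomial-integral-lower-bound}, applied with degree $d^{\prime}-2$, gives
$$\int_{0}^{1}p\left(s\right)ds\geq\left(\frac{2}{d^{\prime}}\right)^{2}p\left(0\right)=\left(\frac{2}{d^{\prime}}\right)^{2}\left\langle\nabla^{2}\psi_{x,d}\left(x^{\ast}\right)h,h\right\rangle .$$
The Hessian is now evaluated only at $x^{\ast}$, and $\left\|x^{\ast}-x\right\|\leq r$. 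Lemma~\ref{lem:hessian-psi} then gives the lower bound $\left(\frac{2}{d^{\prime}}\right)^{2}\frac{\mu}{2}\left\|h\right\|^{2}$. The only point to get right is choosing the endpoint $s=0$ to correspond to $x^{\ast}$ rather than to $x^{+}$.

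\textbf{Upper bound.} By definition of $\psi_{x,d}$,
$$\nabla\psi_{x,d}\left(x^{\ast}\right)-\nabla f\left(x^{\ast}\right)=-\nabla R_{x,d}\left(x^{\ast}\right)+t\left(x\right)d^{\prime}\left\|x^{\ast}-x\right\|^{d^{\prime}-2}\left(x^{\ast}-x\right).$$
Theorem~\ref{thm:baes} with $k=1$ bounds the first term by $\frac{L}{d!}\left\|x-x^{\ast}\right\|^{d}$. The second term has norm $t\left(x\right)d^{\prime}\left\|x-x^{\ast}\right\|^{d^{\prime}-1}$. Since $d^{\prime}-1-d\in\left\{0,1\right\}$ and $\left\|x-x^{\ast}\right\|\leq r$, we have $\left\|x-x^{\ast}\right\|^{d^{\prime}-1}\leq\max\left\{r,1\right\}\left\|x-x^{\ast}\right\|^{d}$. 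We also have $t\left(x\right)\leq\max_{\left\|z-x^{\ast}\right\|\leq r}t\left(z\right)$, and this maximum is finite because $t$ is continuous by the earlier lemma. Cauchy--Schwarz then gives the upper bound.

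\textbf{Conclusion.} Combining the two bounds and dividing by $\left\|h\right\|$ (the case $h=0$ is trivial) gives
$$\left\|x^{+}-x^{\ast}\right\|\leq\left(\frac{d^{\prime}}{2}\right)^{2}\frac{2}{\mu}\left(\frac{L}{d!}+\left(\max_{\left\|z-x^{\ast}\right\|\leq r}t\left(z\right)\right)d^{\prime}\max\left\{r,1\right\}\right)\left\|x-x^{\ast}\right\|^{d},$$
which is the claimed bound.
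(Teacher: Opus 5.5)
Your proof is correct and follows the paper's argument essentially step for step. You add the two variational inequalities, write the gradient difference as an integral of the Hessian along the segment starting at $x^{\ast}$, apply Theorem~\ref{thm:polynomial-integral-lower-bound} (with degree $d^{\prime}-2$, constant $(2/d^{\prime})^{2}$) together with Lemma~\ref{lem:hessian-psi} at $x^{\ast}$, and bound $\|\nabla f(x^{\ast})-\nabla\psi_{x,d}(x^{\ast})\|$ via Theorem~\ref{thm:baes} and the factor $\max\{r,1\}$, while also making explicit the nonnegativity and degree of the integrand and the finiteness of $\max_{\|z-x^{\ast}\|\leq r}t(z)$, which the paper leaves implicit.
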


\begin{proof}
    Let $x\in\mathbb{R}^{n}$ satisfying $\left\|x-x^{\ast}\right\|\leq r$. Then, by Lemma~\ref{lem:hessian-psi}, $$\text{$\nabla^{2}\psi_{x,d}\left(x^{\ast}\right)\succeq\frac{\mu}{2}I$.}$$
    Since $x^{\ast}$ minimizes $f$ over $C$, $\left\langle\nabla f\left(x^{\ast}\right),y-x^{\ast}\right\rangle\geq0$ for all $y\in C$. Taking $y:=x^{+}$ gives $$\text{$\left\langle\nabla f\left(x^{\ast}\right),x^{+}-x^{\ast}\right\rangle\geq0$.}$$ Since $x^{+}$ minimizes $\psi_{x,d}$ over $C$, $\left\langle\nabla\psi_{x,d}\left(x^{+}\right),y-x^{+}\right\rangle\geq0$ for all $y\in C$. Taking $y:=x^{\ast}$ gives $$\text{$\left\langle\nabla\psi_{x,d}\left(x^{+}\right),x^{\ast}-x^{+}\right\rangle\geq0$.}$$ Adding the two inequalities gives $$\text{$\left\langle\nabla f\left(x^{\ast}\right)-\nabla\psi_{x,d}\left(x^{+}\right),x^{+}-x^{\ast}\right\rangle\geq0$.}$$ By the fundamental theorem of calculus, $$\text{$\nabla\psi_{x,d}\left(x^{+}\right)=\nabla\psi_{x,d}\left(x^{\ast}\right)+\int_{0}^{1}\nabla^{2}\psi_{x,d}\left(x^{\ast}+t\left(x^{+}-x^{\ast}\right)\right)\left(x^{+}-x^{\ast}\right)dt$.}$$ Therefore, $$\left\langle\nabla f\left(x^{\ast}\right)-\nabla\psi_{x,d}\left(x^{\ast}\right),x^{+}-x^{\ast}\right\rangle\geq\int_{0}^{1}\left\langle\nabla^{2}\psi_{x,d}\left(x^{\ast}+t\left(x^{+}-x^{\ast}\right)\right)\left(x^{+}-x^{\ast}\right),x^{+}-x^{\ast}\right\rangle dt.$$ By Theorem~\ref{thm:polynomial-integral-lower-bound} and Lemma~\ref{lem:hessian-psi}, $$\text{$\left\langle\nabla f\left(x^{\ast}\right)-\nabla\psi_{x,d}\left(x^{\ast}\right),x^{+}-x^{\ast}\right\rangle\geq\frac{1}{\left(\frac{d^{\prime}}{2}\right)^{2}}\left\langle\nabla^{2}\psi_{x,d}\left(x^{\ast}\right)\left(x^{+}-x^{\ast}\right),x^{+}-x^{\ast}\right\rangle\geq\frac{1}{\left(\frac{d^{\prime}}{2}\right)^{2}}\frac{\mu}{2}\left\|x^{+}-x^{\ast}\right\|^{2}$.}$$ Therefore, $$\text{$\left\|x^{+}-x^{\ast}\right\|\leq\left(\frac{d^{\prime}}{2}\right)^{2}\frac{2}{\mu}\left\|\nabla f\left(x^{\ast}\right)-\nabla\psi_{x,d}\left(x^{\ast}\right)\right\|$.}$$ By Theorem~\ref{thm:baes} and Lemma~\ref{lem:hessian-psi}, $$
    \begin{aligned}
        \left\|\nabla f\left(x^{\ast}\right)-\nabla\psi_{x,d}\left(x^{\ast}\right)\right\| &= \left\|\nabla R_{x,d}\left(x^{\ast}\right)-t\left(x\right)d^{\prime}\left\|x^{\ast}-x\right\|^{d^{\prime}-2}\left(x^{\ast}-x\right)\right\|\\
         &\leq \left\|\nabla R_{x,d}\left(x^{\ast}\right)\right\|+t\left(x\right)d^{\prime}\left\|x^{\ast}-x\right\|^{d^{\prime}-1}\\
         &\leq \frac{L}{d!}\left\|x^{\ast}-x\right\|^{d}+t\left(x\right)d^{\prime}\max\left\{r,1\right\}\left\|x^{\ast}-x\right\|^{d}\\
         &= \text{$\left(\frac{L}{d!}+\left(\max_{\left\|z-x^{\ast}\right\|\leq r}t\left(z\right)\right)d^{\prime}\max\left\{r,1\right\}\right)\left\|x^{\ast}-x\right\|^{d}$.}
    \end{aligned}
    $$ Therefore, $$\text{$\left\|x^{+}-x^{\ast}\right\|\leq\left(\frac{d^{\prime}}{2}\right)^{2}\frac{2}{\mu}\left(\frac{L}{d!}+\left(\max_{\left\|z-x^{\ast}\right\|\leq r}t\left(z\right)\right)d^{\prime}\max\left\{r,1\right\}\right)\left\|x-x^{\ast}\right\|^{d}$.}$$
\end{proof}

\subsection{Active-set identification}

With reference to problem \eqref{eq:main-problem}, we denote the set of active constraints at a feasible solution $x\in C$ by $$\text{$I\left(x\right):=\left\{i\in\left\{1,\dots,m\right\}:g_{i}\left(x\right)=0\right\}$.}$$

\begin{assumption}
    \label{ass:licq}
    The gradients $\left\{\nabla g_{i}\left(x^{\ast}\right)\right\}_{i\in I\left(x^{\ast}\right)}$ of the active constraints at the optimal solution $x^{\ast}$ are linearly independent.
\end{assumption}

If the gradients of the active constraints at $x^{\ast}$ are linearly independent, then there exist Lagrange~multipliers $\lambda_{1}^{\ast},\dots,\lambda_{m}^{\ast}\geq0$ such that the Karush--Kuhn--Tucker (KKT) conditions (see, e.g., Theorem~12.1 in \cite{nocedal2006numerical}) are satisfied:
\begin{itemize}
    \item stationarity: $\nabla f\left(x^{\ast}\right)+\sum_{i=1}^{m}\lambda_{i}^{\ast}\nabla g_{i}\left(x^{\ast}\right)=0$;
    \item complementary slackness: $\lambda_{i}^{\ast}g_{i}\left(x^{\ast}\right)=0$ for every $i\in\left\{1,\dots,m\right\}$;
    \item primal feasibility: $g_{i}\left(x^{\ast}\right)\leq0$ for every $i\in\left\{1,\dots,m\right\}$;
    \item dual feasibility: $\lambda_{i}^{\ast}\geq0$ for every $i\in\left\{1,\dots,m\right\}$.
\end{itemize}
Note that complementary slackness means that $\lambda_{i}^{\ast}=0$ for every $i\notin I\left(x^{\ast}\right)$.

\begin{assumption}[Strict complementary slackness]
    \label{ass:strict-complementary-slackness}
    $\lambda_{i}^{\ast}>0$ for every $i\in I\left(x^{\ast}\right)$.
\end{assumption}

We denote the space of critical directions a feasible solution $x\in C$ by $$\text{$\Lambda\left(x\right)=\left\{u\in\mathbb{R}^{n}:\text{$\left\langle\nabla g_{i}\left(x\right),u\right\rangle=0$, $i\in I\left(x\right)$}\right\}$.}$$ Since $f$ is strongly convex (Assumption~\ref{ass:strongly-convex}) and $g_{1},\dots,g_{m}$ are convex, the following second-order sufficient condition (SOSC) (see, e.g., Theorem~12.6 in \cite{nocedal2006numerical}) is trivially satisfied: $$\text{$\left\langle\left(\nabla^{2}f\left(x^{\ast}\right)+\sum_{i=1}^{m}\lambda_{i}^{\ast}\nabla^{2}g_{i}\left(x^{\ast}\right)\right)u,u\right\rangle>0$ for all $u\in\Lambda\left(x^{\ast}\right)$.}$$

\newpage

\noindent Robinson \cite{robinson1974perturbed} proved the following theorem on how continuous perturbations of nonlinear programs affect their KKT points.

\begin{theorem}[Theorem~2.1 in \cite{robinson1974perturbed}]
    \label{thm:robinson}
    Consider a parameteric family of optimization problems $$
    \begin{aligned}
        \min_{x\in\mathbb{R}^{n}} &&& f\left(x,\theta\right) && \\
        \operatorname{s.t.} &&& \text{$g_{i}\left(x,\theta\right)\leq0$,} && \text{$i\in\left\{1,\dots,m\right\}$,}
    \end{aligned}
    $$ where $f,g_{1},\dots,g_{m}:\mathbb{R}^{n}\times\mathbb{R}^{p}\to\mathbb{R}$ are twice partially continuously differentiable with respect to $x$.
    
    Let $x^{\ast}\in\mathbb{R}^{n}$ and $\theta^{\ast}\in\mathbb{R}^{p}$. Assume that the gradients of the active constraints at $x^{\ast}$ for parameter $\theta^{\ast}$ are linearly independent, that there exist Lagrange multipliers $\lambda_{1}^{\ast},\dots,\lambda_{m}^{\ast}\geq0$ satisfying the KKT conditions for parameter $\theta^{\ast}$ with strict complementary slackness, and that the SOSC for parameter $\theta^{\ast}$ is satisfied. Then, there exist a radius $R>0$ and continuous functions $X:B\left[\theta^{\ast},R\right]\to\mathbb{R}^{n}$ and $\Lambda_{1},\dots,\Lambda_{m}:B\left[\theta^{\ast},R\right]\to\mathbb{R}_{+}$ such that
    \begin{itemize}
        \item $X\left(\theta^{\ast}\right)=x^{\ast}$ and $\Lambda_{i}\left(\theta^{\ast}\right)=\lambda_{i}^{\ast}$ for every $i\in\left\{1,\dots,m\right\}$,
        \item for every parameter $\theta\in\mathbb{R}^{p}$ satisfying $\left\|\theta-\theta^{\ast}\right\|\leq R$, the gradients of the active constraints at $X\left(\theta\right)$ for parameter $\theta$ are linearly independent, $X\left(\theta\right),\Lambda_{1}\left(\theta\right),\dots,\Lambda_{m}\left(\theta\right)$ satisfy the KKT conditions for parameter $\theta$ with strict complementary slackness, and the SOSC for parameter $\theta$ is satisfied.
    \end{itemize}
\end{theorem}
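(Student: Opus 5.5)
This is Robinson's classical result, so the plan is to rebuild his argument: apply the implicit function theorem to the KKT system restricted to the active set at $\theta^{\ast}$, and then use continuity to show that nothing changes on the inactive constraints.

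\textbf{Reduced system.} Let $I:=I(x^{\ast})$ denote the active set at parameter $\theta^{\ast}$. Define $F:\mathbb{R}^{n}\times\mathbb{R}^{|I|}\times\mathbb{R}^{p}\to\mathbb{R}^{n}\times\mathbb{R}^{|I|}$ by
$$F(x,\lambda_I,\theta):=\left(\nabla_x f(x,\theta)+\sum_{i\in I}\lambda_i\nabla_x g_i(x,\theta),\;\left(g_i(x,\theta)\right)_{i\in I}\right).$$
Its Jacobian with respect to $(x,\lambda_I)$ at $(x^{\ast},\lambda^{\ast}_I,\theta^{\ast})$ is the block matrix
$$\begin{pmatrix} H & A^{\top}\\ A & 0\end{pmatrix},$$
where $H$ is the Hessian of the Lagrangian and $A$ has rows $\nabla_x g_i(x^{\ast},\theta^{\ast})^{\top}$ for $i\in I$.

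\textbf{Nonsingularity of the Jacobian.} Suppose $(u,v)$ lies in its kernel. Then $Au=0$, so $u\in\Lambda(x^{\ast})$. The first block row gives $Hu+A^{\top}v=0$, and taking the inner product with $u$ yields $\langle Hu,u\rangle=-\langle v,Au\rangle=0$. The SOSC then forces $u=0$. Consequently $A^{\top}v=0$, and LICQ forces $v=0$.

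\textbf{Implicit function theorem.} I expect this to be the main technical obstacle. The hypothesis is only twice continuous differentiability in $x$, with no explicit continuity in $\theta$, whereas the implicit function theorem needs joint continuity of $F$ and $\partial_{(x,\lambda)}F$. I would read the hypothesis as asking for these derivatives to be jointly continuous in $(x,\theta)$, as Robinson does. Failing that, I would prove the result directly with a contraction-mapping argument that needs only continuity of the partial Jacobian in $\theta$. Under joint continuity, the implicit function theorem (the continuous-parameter version) gives $R>0$ and continuous $X(\theta)$, $\Lambda_I(\theta)$ with $F=0$ and $(X,\Lambda_I)(\theta^{\ast})=(x^{\ast},\lambda^{\ast}_I)$. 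For $i\notin I$, set $\Lambda_i\equiv0$.

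\textbf{Verification.} After shrinking $R$, continuity gives each of the following on $B[\theta^{\ast},R]$.
\begin{itemize}
\item $\Lambda_i(\theta)>0$ for $i\in I$, because $\lambda_i^{\ast}>0$.
\item $g_i(X(\theta),\theta)<0$ for $i\notin I$, because $g_i(x^{\ast},\theta^{\ast})<0$.
\end{itemize}
Hence the active set at $X(\theta)$ is exactly $I$. Together with $F=0$, this establishes primal feasibility, dual feasibility, stationarity, and complementarity with strict complementary slackness.

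The remaining conditions also persist after shrinking $R$ further:
\begin{itemize}
\item LICQ persists because full row rank of $A(\theta)$ is an open condition.
\item SOSC persists as follows. Let $Z(\theta)$ be a basis of $\ker A(\theta)$ chosen to vary continuously, for example by projecting a fixed basis of $\ker A(\theta^{\ast})$ onto $\ker A(\theta)$, which has constant dimension. Then $Z^{\top}HZ\succ0$ at $\theta^{\ast}$, and positive definiteness is preserved under small perturbations.
\end{itemize}
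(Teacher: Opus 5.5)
\textbf{Verdict.} Your proof is correct. The paper gives no proof of this theorem; it simply cites Robinson, so there is no in-paper argument to compare yours against.

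\textbf{Comparison with the classical argument.} Your reconstruction is the standard argument and is essentially Robinson's. You show the KKT Jacobian is nonsingular from SOSC plus LICQ, and you apply an implicit function theorem that needs only continuity in the parameter. Continuity then preserves the signs of $\Lambda_i$ for $i\in I$ and of $g_i$ for $i\notin I$, and LICQ and SOSC survive as open conditions. Your projected basis $P(\theta)Z_0$, with $P(\theta)=I-A(\theta)^{\top}(A(\theta)A(\theta)^{\top})^{-1}A(\theta)$, is indeed continuous once $A(\theta)$ has full row rank. The only difference from the classical proof (Fiacco--McCormick, Robinson) is cosmetic. That proof applies the implicit function theorem to the full KKT system, including the equations $\lambda_i g_i(x,\theta)=0$ for all $i$; that Jacobian is nonsingular exactly under LICQ, strict complementarity and SOSC. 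Fixing the active set first, as you do, shortens the nonsingularity proof and uses strict complementarity only in the continuity step.

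\textbf{The hypothesis.} You are right that the hypothesis, as transcribed in the paper, is too weak. $C^{2}$ dependence on $x$ for each fixed $\theta$ does not suffice. Take $n=p=1$, the single constraint $g_1\equiv-1$, and $f(x,\theta)=(x-h(\theta))^{2}$ with $h(\theta)=1$ for $\theta>0$ and $h(\theta)=0$ otherwise. At $(x^{\ast},\theta^{\ast})=(0,0)$ every assumption holds, yet the only KKT point is $X(\theta)=h(\theta)$, which is discontinuous. Robinson assumes that $f$, $g_i$ and their first and second $x$-derivatives are jointly continuous in $(x,\theta)$. That is what your main line uses, and it holds in the paper's application because $\theta\mapsto T_{\theta,d}$ and $\theta\mapsto t(\theta)$ are continuous.

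\textbf{The fallback remark.} Your fallback does not work as stated. In the example above the partial Jacobian $\partial_x F\equiv 2$ is continuous, so continuity of the partial Jacobian alone cannot suffice. A contraction-mapping argument also needs $\theta\mapsto F(z,\theta)$ to be continuous:
\begin{itemize}
\item at $z=(x^{\ast},\lambda_I^{\ast})$, to get a fixed point near $z^{\ast}$;
\item at nearby $z$, to get continuity of that fixed point in $\theta$.
\end{itemize}
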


Consider problem~\eqref{eq:subproblem} with parameter $x\in\mathbb{R}^{n}$. Since the functions $x\mapsto T_{x,d}$ and $x\mapsto t\left(x\right)$ are~continuous, the function $x\mapsto\psi_{x,d}$ is also continuous. Since $\nabla\psi_{x^{\ast},d}\left(x^{\ast}\right)=\nabla f\left(x^{\ast}\right)$ and $\nabla^{2}\psi_{x^{\ast},d}\left(x^{\ast}\right)=\nabla^{2}f\left(x^{\ast}\right)$, it can be verified directly that $x^{\ast},\lambda_{1}^{\ast},\dots,\lambda_{m}^{\ast}$ satisfy the conditions of Theorem~\ref{thm:robinson} with $f\left(x,\theta\right):=\psi_{\theta,d}\left(x\right)$ and $g_{i}\left(x,\theta\right):=g_{i}\left(x\right)$ for every $i\in\left\{1,\ldots,m\right\}$, and parameter $\theta^{\ast}:=x^{\ast}$. Thus, we can establish the following lemma.

\begin{lemma}
    \label{lem:robinson-applied-to-subproblem}
    Assume that $f$ is strongly convex (Assumption~\ref{ass:strongly-convex}), that the gradients of the active constraints at $x^{\ast}$ are linearly independent (Assumption~\ref{ass:licq}), and that $x^{\ast},\lambda_{1}^{\ast},\dots,\lambda_{m}^{\ast}$ satisfy the KKT conditions with strict complementary slackness (Assumption~\ref{ass:strict-complementary-slackness}). Then, there exist a radius $R>0$ and continuous functions $X^{+}:B\left[x^{\ast},R\right]\to\mathbb{R}^{n}$ and $\Lambda_{1},\dots,\Lambda_{m}:B\left[x^{\ast},R\right]\to\mathbb{R}_{+}$ such that
    \begin{itemize}
        \item $X^{+}\left(x^{\ast}\right)=x^{\ast}$ and $\Lambda_{i}\left(x^{\ast}\right)=\lambda_{i}^{\ast}$ for every $i\in\left\{1,\dots,m\right\}$,
        \item for every iterate $x\in\mathbb{R}^{n}$ satisfying $\left\|x-x^{\ast}\right\|\leq R$, the gradients of the active constraints at $X^{+}\left(x\right)$ are linearly independent, $X^{+}\left(x\right),\Lambda_{1}\left(x\right),\dots,\Lambda_{m}\left(x\right)$ satisfy the KKT conditions for problem~\eqref{eq:subproblem} with strict complementary slackness, and the SOSC for problem~\eqref{eq:subproblem} is satisfied.
    \end{itemize}
\end{lemma}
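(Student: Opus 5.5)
The plan is to apply Theorem~\ref{thm:robinson} directly, with parametric objective $f\left(y,\theta\right):=\psi_{\theta,d}\left(y\right)$, constraints $g_{i}\left(y,\theta\right):=g_{i}\left(y\right)$, and base parameter $\theta^{\ast}:=x^{\ast}$. The conclusion of Theorem~\ref{thm:robinson} is then exactly the statement of the lemma, with $X^{+}:=X$. The work lies in verifying the hypotheses of Theorem~\ref{thm:robinson} at $\left(x^{\ast},x^{\ast}\right)$ and checking the regularity in $\theta$ that Robinson's theorem needs.

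\textbf{Regularity.} Write $\psi_{\theta,d}\left(y\right)=T_{\theta,d}\left(y\right)+t\left(\theta\right)\left\|y-\theta\right\|^{d^{\prime}}$. Each coefficient of the polynomial $y\mapsto T_{\theta,d}\left(y\right)$ is a continuous function of $\theta$, since $f$ is $d$ times continuously differentiable. The map $\theta\mapsto t\left(\theta\right)$ is continuous by the continuity lemma above. For fixed $\theta$, the function $\psi_{\theta,d}$ is a polynomial in $y$. Its first and second partial derivatives in $y$ are polynomials in $y$ whose coefficients depend continuously on $\theta$. Hence they are jointly continuous in $\left(y,\theta\right)$, which is the sense of ``twice partially continuously differentiable with respect to $x$'' needed in Theorem~\ref{thm:robinson}. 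The constraints do not depend on $\theta$, and as polynomials they are smooth.

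\textbf{Hypotheses at $\theta^{\ast}=x^{\ast}$.} The point $x^{\ast}$ is feasible. The active set is $I\left(x^{\ast}\right)$, unchanged because the constraints are the same, so linear independence of the active gradients is Assumption~\ref{ass:licq}. Since $d\geq2$ and $d^{\prime}\geq4$, the regularization term $t\left(x^{\ast}\right)\left\|y-x^{\ast}\right\|^{d^{\prime}}$ vanishes to order at least $4$ at $y=x^{\ast}$. Therefore
\[
\nabla\psi_{x^{\ast},d}\left(x^{\ast}\right)=\nabla f\left(x^{\ast}\right),\qquad \nabla^{2}\psi_{x^{\ast},d}\left(x^{\ast}\right)=\nabla^{2}f\left(x^{\ast}\right).
\]
It follows that the KKT conditions for problem~\eqref{eq:main-problem} with multipliers $\lambda^{\ast}$ are exactly the KKT conditions for problem~\eqref{eq:subproblem} at $\theta=x^{\ast}$. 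Strict complementarity carries over from Assumption~\ref{ass:strict-complementary-slackness}. The Hessian of the Lagrangian also coincides, namely $\nabla^{2}f\left(x^{\ast}\right)+\sum_{i}\lambda_{i}^{\ast}\nabla^{2}g_{i}\left(x^{\ast}\right)\succeq\mu I$. This holds by Assumption~\ref{ass:strongly-convex}, the convexity of the $g_{i}$ (SOS-convex implies convex), and $\lambda^{\ast}\geq0$. So the SOSC holds on $\Lambda\left(x^{\ast}\right)$, and in fact on all of $\mathbb{R}^{n}$.

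\textbf{Conclusion and main obstacle.} Theorem~\ref{thm:robinson} now yields $R>0$ and continuous $X,\Lambda_{i}$ with the stated properties. For $\left\|x-x^{\ast}\right\|\leq R$, the problem with parameter $\theta=x$ is precisely problem~\eqref{eq:subproblem} at iterate $x$. The main point needing care is the regularity step: Robinson's theorem asks for continuity of the derivatives jointly in $\left(y,\theta\right)$. This rests on the continuity of $t\left(\cdot\right)$, which was established earlier, and on the continuity of the derivatives of $f$ up to order $d$. No differentiability of $t$ in $\theta$ is needed, so I expect the argument to go through directly.

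A further remark, which the lemma itself does not claim: since problem~\eqref{eq:subproblem} is convex and $\psi_{x,d}$ is coercive with a unique minimizer, the KKT point $X^{+}\left(x\right)$ must coincide with the iterate $x^{+}$. This is useful for the subsequent active-set identification.
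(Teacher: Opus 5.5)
Your proposal is correct and follows the same route as the paper: you apply Theorem~\ref{thm:robinson} with objective $\psi_{\theta,d}(y)$, constraints $g_i(y)$ and $\theta^{\ast}:=x^{\ast}$, using continuity of $\theta\mapsto T_{\theta,d}$ and $\theta\mapsto t(\theta)$ together with $\nabla\psi_{x^{\ast},d}(x^{\ast})=\nabla f(x^{\ast})$ and $\nabla^{2}\psi_{x^{\ast},d}(x^{\ast})=\nabla^{2}f(x^{\ast})$ to transfer LICQ, strict complementarity and the SOSC. You spell out the joint continuity of the $y$-derivatives in $(y,\theta)$ and the SOSC check more explicitly than the paper, which only says the hypotheses ``can be verified directly''.
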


We can now prove that the set of active constraints at $x^{\ast}$ is identified locally in a single iteration.

\begin{theorem}
    \label{thm:active-set-identification}
    Assume that the conditions of Lemma~\ref{lem:robinson-applied-to-subproblem} are satisfied. Then, there exists a radius $r>0$ such that, for every iterate $x\in\mathbb{R}^{n}$ satisfying $\left\|x-x^{\ast}\right\|\leq r$, the next iterate $x^{+}$ of Algorithm~\ref{alg:dth-order-newton-method} satisfies $$\text{$I\left(x^{+}\right)=I\left(x^{\ast}\right)$.}$$
\end{theorem}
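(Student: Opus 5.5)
We will use Lemma~\ref{lem:robinson-applied-to-subproblem} together with the uniqueness of the solution of problem~\eqref{eq:subproblem} (Theorem~\ref{thm:well-definedness}), and then argue by continuity in both directions of the inclusion $I\left(x^{+}\right)=I\left(x^{\ast}\right)$.

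\emph{Step 1: $X^{+}\left(x\right)$ is the next iterate.} For $\left\|x-x^{\ast}\right\|\leq R$, Lemma~\ref{lem:robinson-applied-to-subproblem} says that $X^{+}\left(x\right)$, with multipliers $\Lambda_{i}\left(x\right)\geq0$, satisfies the KKT conditions of problem~\eqref{eq:subproblem}. This problem is convex: $\psi_{x,d}$ is SOS-convex, hence convex, and the $g_{i}$ are convex. For a convex problem, the KKT conditions are sufficient for global optimality. Hence $X^{+}\left(x\right)$ is an optimal solution, and by uniqueness $X^{+}\left(x\right)=x^{+}$.

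\emph{Step 2: inactive constraints stay inactive.} Take $i\notin I\left(x^{\ast}\right)$, so $g_{i}\left(x^{\ast}\right)<0$. Since $g_{i}$ is continuous and $X^{+}$ is continuous with $X^{+}\left(x^{\ast}\right)=x^{\ast}$, there is $r_{i}\in\left(0,R\right]$ such that $g_{i}\left(X^{+}\left(x\right)\right)<0$ whenever $\left\|x-x^{\ast}\right\|\leq r_{i}$. Hence $i\notin I\left(x^{+}\right)$.

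\emph{Step 3: active constraints stay active.} Take $i\in I\left(x^{\ast}\right)$. Strict complementary slackness (Assumption~\ref{ass:strict-complementary-slackness}) gives $\Lambda_{i}\left(x^{\ast}\right)=\lambda_{i}^{\ast}>0$. Since $\Lambda_{i}$ is continuous, there is $r_{i}\in\left(0,R\right]$ such that $\Lambda_{i}\left(x\right)>0$ whenever $\left\|x-x^{\ast}\right\|\leq r_{i}$. Complementary slackness for problem~\eqref{eq:subproblem} states $\Lambda_{i}\left(x\right)g_{i}\left(X^{+}\left(x\right)\right)=0$, so $g_{i}\left(x^{+}\right)=0$ and $i\in I\left(x^{+}\right)$.

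\emph{Step 4: conclusion.} Set $r:=\min_{i}r_{i}$, which is positive because there are finitely many constraints. For $\left\|x-x^{\ast}\right\|\leq r$, Steps 2 and 3 together give $I\left(x^{+}\right)=I\left(x^{\ast}\right)$.

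The only delicate point is Step 1. Robinson's theorem supplies only a KKT point, not necessarily the minimizer produced by Algorithm~\ref{alg:dth-order-newton-method}. This gap is closed by convexity of the subproblem, which makes KKT points global minimizers, together with the uniqueness from Theorem~\ref{thm:well-definedness}. The rest is continuity.
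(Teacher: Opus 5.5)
Your proposal is correct and follows essentially the same route as the paper: you first identify $x^{+}=X^{+}(x)$ using KKT sufficiency for the convex subproblem, and you make the appeal to uniqueness explicit where the paper leaves it implicit. You then use continuity of $\Lambda_{i}$ and of $g_{i}\circ X^{+}$, together with complementary slackness, to get both inclusions; the only cosmetic difference is that you obtain $g_{i}(x^{\ast})<0$ for $i\notin I(x^{\ast})$ directly from the definition of $I(x^{\ast})$, which is cleaner than the paper's attribution of this fact to strict complementary slackness.
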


\begin{proof}
    Let $x\in\mathbb{R}^{n}$ satisfying $\left\|x-x^{\ast}\right\|\leq R$. By Lemma~\ref{lem:robinson-applied-to-subproblem}, $X^{+}\left(x\right),\Lambda_{1}\left(x\right),\dots,\Lambda_{m}\left(x\right)$ satisfy the KKT conditions for problem~\eqref{eq:subproblem}. By convexity, $X^{+}\left(x\right)$ is the optimal solution. Hence, $$\text{$x^{+}=X^{+}\left(x\right)$.}$$ By strict complementary slackness (Assumption~\ref{ass:strict-complementary-slackness}), $$
    \begin{aligned}
         & \Lambda_{i}\left(x^{\ast}\right)>0 && \text{for all $i\in I\left(x^{\ast}\right)$,}\\
         & g_{i}\left(X^{+}\left(x^{\ast}\right)\right)<0 && \text{for all $i\notin I\left(x^{\ast}\right)$.}
    \end{aligned}
    $$ By continuity, there exists a radius $r\in\left(0,R\right]$ such that every $x\in\mathbb{R}^{n}$ satisfying $\left\|x-x^{\ast}\right\|\leq r$ satisfies $$
    \begin{aligned}
         & \Lambda_{i}\left(x\right)>0 && \text{for all $i\in I\left(x^{\ast}\right)$,}\\
         & g_{i}\left(X^{+}\left(x\right)\right)<0 && \text{for all $i\notin I\left(x^{\ast}\right)$.}
    \end{aligned}
    $$ Now, by complementary slackness, $$
    \begin{aligned}
         & g_{i}\left(X^{+}\left(x\right)\right)=0 && \text{for all $i\in I\left(x^{\ast}\right)$,}\\
         & \Lambda_{i}\left(x\right)=0 && \text{for all $i\notin I\left(x^{\ast}\right)$.}
    \end{aligned}
    $$ Hence, $I\left(X^{+}\left(x\right)\right)=I\left(x^{\ast}\right)$.
\end{proof}

\section{Performance estimation for $d=3$ in the unconstrained setting for classes of univariate $f$}

Ahmadi, Chaudhry, and Zhang \cite{ahmadi2024higher} showed that, in the unconstrained setting, the next iterate of their method for $d=3$ and a univariate function $f:\mathbb{R}\to\mathbb{R}$ is
\begin{equation}
    \label{eq:third-order-newton-univariate}
    x^{+}=
    \begin{cases}
        \displaystyle x+\frac{-2f^{\prime\prime}\left(x\right)+\sqrt[3]{8\left(f^{\prime\prime}\left(x\right)\right)^{3}-12f^{\prime}\left(x\right)f^{\prime\prime}\left(x\right)f^{\prime\prime\prime}\left(x\right)}}{f^{\prime\prime\prime}\left(x\right)} & \text{if $f^{\prime\prime\prime}\left(x\right)\neq0$}\\
        \displaystyle x-\frac{f^{\prime}\left(x\right)}{f^{\prime\prime}\left(x\right)} & \text{if $f^{\prime\prime\prime}\left(x\right)=0$.}
    \end{cases}
\end{equation}
Note that, by rewriting, the update can be characterized implicitly by the cubic equation
\begin{equation}
    \label{eq:cubic-equation}
    \text{$f^{\prime}\left(x\right)f^{\prime\prime}\left(x\right)+\left(f^{\prime\prime}\left(x\right)\right)^{2}\left(x^{+}-x\right)+\frac{f^{\prime\prime}\left(x\right)f^{\prime\prime\prime}\left(x\right)}{2}\left(x^{+}-x\right)^{2}+\frac{\left(f^{\prime\prime\prime}\left(x\right)\right)^{2}}{12}\left(x^{+}-x\right)^{3}=0$.}
\end{equation}
This implicit characterization is particularly convenient in performance estimation, as it yields a polynomial relation between the derivatives of the $f$ at $x$ and the step $x^{+}-x$.

\subsection{Quartic $f$}

Let $f:\mathbb{R}\to\mathbb{R}$ be a quartic function satisfying $x^{\ast}=0$ and $f\left(x^{\ast}\right)=0$. That is, $$\text{$f\left(x\right):=ax^{4}+bx^{3}+cx^{2}$.}$$ Note that $f$ is strongly convex with constant $\mu>0$, i.e., $$\text{$f^{\prime\prime}\left(x\right)=12ax^{2}+6bx+2c\geq\mu$ for all $x\in\mathbb{R}$,}$$ if and only if $$\text{$a\geq0$, $c\geq\frac{\mu}{2}$, and $3b^{2}\leq8a\left(c-\frac{\mu}{2}\right)$.}$$ Likewise, the third derivative of $f$ is Lipschitz continuous with constant $L>0$, i.e., $$\text{$\left|f^{\prime\prime\prime}\left(y\right)-f^{\prime\prime\prime}\left(x\right)\right|=\left|\left(24ay+6b\right)-\left(24ax+6b\right)\right|=24\left|a\right|\left|y-x\right|\leq L\left|y-x\right|$ for all $x,y\in\mathbb{R}$,}$$ if and only if $\left|a\right|\leq\frac{L}{24}$. Since $f$ is quartic, equation~\eqref{eq:cubic-equation} simplifies considerably. In particular, it reduces to  $$\text{$G\left(a,b,c,x,x^{+}\right):=\left(12ax^{2}+6bx+2c\right)\left(4a\left(x^{+}\right)^{3}+3b\left(x^{+}\right)^{2}+2cx^{+}\right)+\left(3b^{2}-8ac\right)\left(x^{+}-x\right)^{3}=0$.}$$ This allows us to formulate a \emph{performance estimation problem (PEP)}. Consider the problem of determining the largest possible distance between the next iterate $x^{+}$ and the minimizer $x^{\ast}=0$ when the starting point $x$ satisfies $\left|x\right|\leq r$, where $r>0$. The corresponding PEP is
\begin{equation}
    \label{eq:pep_quartic}
    \begin{aligned}
        \tau\left(\mu,L,r\right):=\max_{a,b,c,x,x^{+}} &&& \left|x^{+}\right|\\
        \operatorname{s.t.} &&& G\left(a,b,c,x,x^{+}\right)=0\\
         &&& \text{$0\leq a\leq\frac{L}{24}$, $c\geq\frac{\mu}{2}$, $3b^{2}\leq8a\left(c-\frac{\mu}{2}\right)$}\\
         &&& \text{$\left|x\right|\leq r$,}
    \end{aligned}
\end{equation}
where $\mu,L,r>0$ are fixed. Although this optimization problem is nonconvex, it involves only five decision variables and can therefore be solved reliably using modern global optimization software.

\newpage

\begin{example}
Let $\mu=L=r=1$. An optimal solution to problem~\eqref{eq:pep_quartic}, computed using the global~optimization solver BARON (see, e.g., \cite{zhang2025solving}), is shown in Figure~\ref{fig:quartic-example}.
\end{example}

\begin{figure}[H]
\centering
\begin{tikzpicture}
\begin{axis}[
    scale = 1.2,
    axis lines = center,
    xmin = -1.25, xmax = 1.25,
    ymin = -0.3, ymax = 0.7,
    xtick = {-1, -0.5, 0, 0.5, 1},
    ytick = {-0.2, 0, 0.2, 0.4, 0.6},
    legend pos = north west,
    legend cell align = left
]
\addplot[very thick, domain=-1.5:1.5, samples=100, color=blue]{0.042*x^4-0.074*x^3+0.549*x^2};
\addlegendentry{$f$}
\addplot[very thick, domain=-1.5:1.5, samples=100, color=red]{0.517+1.044*(x-1)+0.579*(x-1)^2+0.094*(x-1)^3+0.006*(x-1)^4};
\addlegendentry{$\psi_{x,3}$}
\node[circle, fill=black, inner sep=2pt] at (axis cs: 1.000, 0.517) {};
\node[anchor=south east] at (axis cs: 1.000, 0.517) {$\left(x,f\left(x\right)\right)$};
\node[circle, fill=black, inner sep=2pt] at (axis cs: -0.233, -0.052) {};
\node[anchor=north east] at (axis cs: -0.233, -0.052) {$\left(x^{+},\psi_{x,3}\left(x^{+}\right)\right)$};
\end{axis}
\end{tikzpicture}
\caption{An optimal solution to problem~\eqref{eq:pep_quartic} for $\mu=L=r=1$. The function $f:\mathbb{R}\to\mathbb{R}$ is given by $f\left(x\right)=ax^{4}+bx^{3}+cx^{2}$, where $a=\frac{1}{24}$, $b\approx-0.074$, and $c\approx0.549$. The starting point is $x=1$ and the next iterate is $x^{+}\approx-0.234$.}
\label{fig:quartic-example}
\end{figure}

Theorem~\ref{thm:convergence} implies that, for every univariate quartic $f$ that is strongly convex with constant $\mu$ and whose third derivative is Lipschitz continuous with constant $L$, there exist a radius $r>0$ and a constant $C>0$ such that, for every iterate $x\in\mathbb{R}^{n}$ satisfying $\left|x\right|\leq r$, the next iterate $x^{+}$ satisfies $$\text{$\left|x^{+}\right|\leq C\left|x\right|^{3}$.}$$ Our objective is to determine the largest radius for which such a cubic convergence guarantee holds uniformly over all univariate quartic $f$ satisfying the above conditions.

\medskip

\noindent The following scaling properties reduce the number of fixed parameters. For every $\lambda>0$,
\begin{itemize}
    \item $a,b,c,x,x^{+}$ are feasible for $\mu,L,r$ if and only if $\lambda a,\lambda b,\lambda c,x,x^{+}$ are feasible for $\lambda\mu,\lambda L,r$, so $$\text{$\tau\left(\lambda\mu,\lambda L,r\right)=\tau\left(\mu,L,r\right)$,}$$
    \item $a,b,c,x,x^{+}$ are feasible for $\mu,L,r$ if and only if $\frac{a}{\lambda^{4}},\frac{b}{\lambda^{3}},\frac{c}{\lambda^{2}},\lambda x,\lambda x^{+}$ are feasible for $\frac{m}{\lambda^{2}},\frac{L}{\lambda^{4}},\lambda r$, so $$\text{$\tau\left(\frac{\mu}{\lambda^{2}},\frac{L}{\lambda^{4}},\lambda r\right)=\lambda\tau\left(\mu,L,r\right)$.}$$
\end{itemize}
Combining these two scaling properties gives $$\tau\left(\mu,L,r\right)=\sqrt{\frac{\mu}{L}}\tau\left(\frac{\mu^{2}}{L},\frac{\mu^{2}}{L},\sqrt{\frac{L}{\mu}}r\right)=\sqrt{\frac{\mu}{L}}\tau\left(1,1,\sqrt{\frac{L}{\mu}}r\right).$$

To study the worst-case performance for the class of univariate quartic $f$ that are strongly convex with constant $\mu$ and whose third derivatives are Lipschitz continuous with constant $L$, we fix $\mu=L=1$ and solve problem~\eqref{eq:pep_quartic} numerically for a range of $r$. For each $r\in\left\{0.1,\ldots,2.0\right\}$, we compute an upper bound on the optimal value using the global optimization solver BARON (see, e.g., \cite{zhang2025solving}). For each $r\in\left\{0.001,\ldots,2.000\right\}$, we use sequential least-squares quadratic programming (SLSQP) (see, e.g., \cite{kraft1988software}) to compute a locally optimal value. The results are shown in Figures~\ref{fig:quartic-tau} and~\ref{fig:quartic-rate}.

\begin{figure}[H]
\centering
\includegraphics[width=0.6\linewidth]{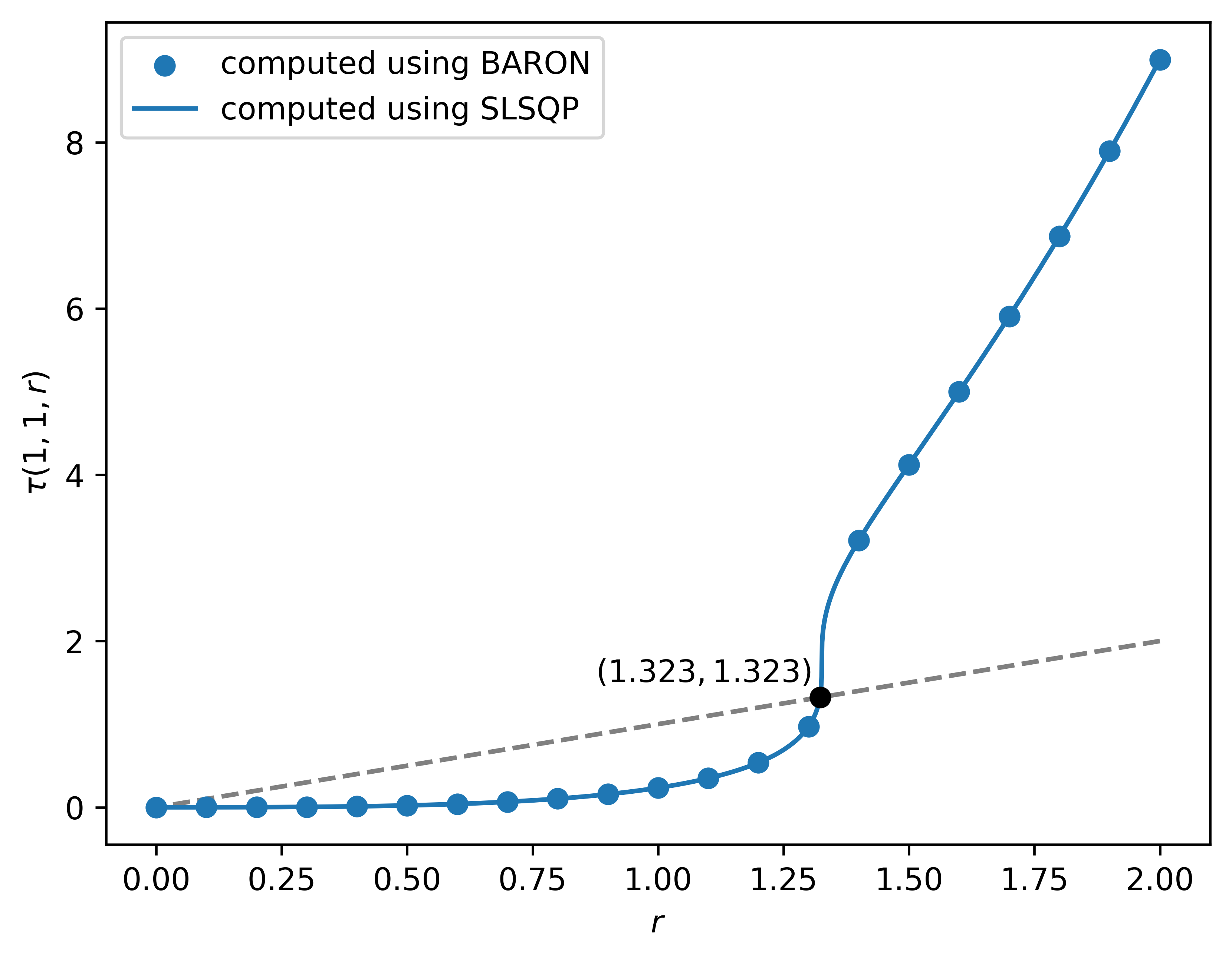}
\caption{Worst-case distance $\tau\left(1,1,r\right)$ as a function of the starting radius $r$.}
\label{fig:quartic-tau}
\end{figure}

\begin{figure}[H]
\centering
\includegraphics[width=0.6\linewidth]{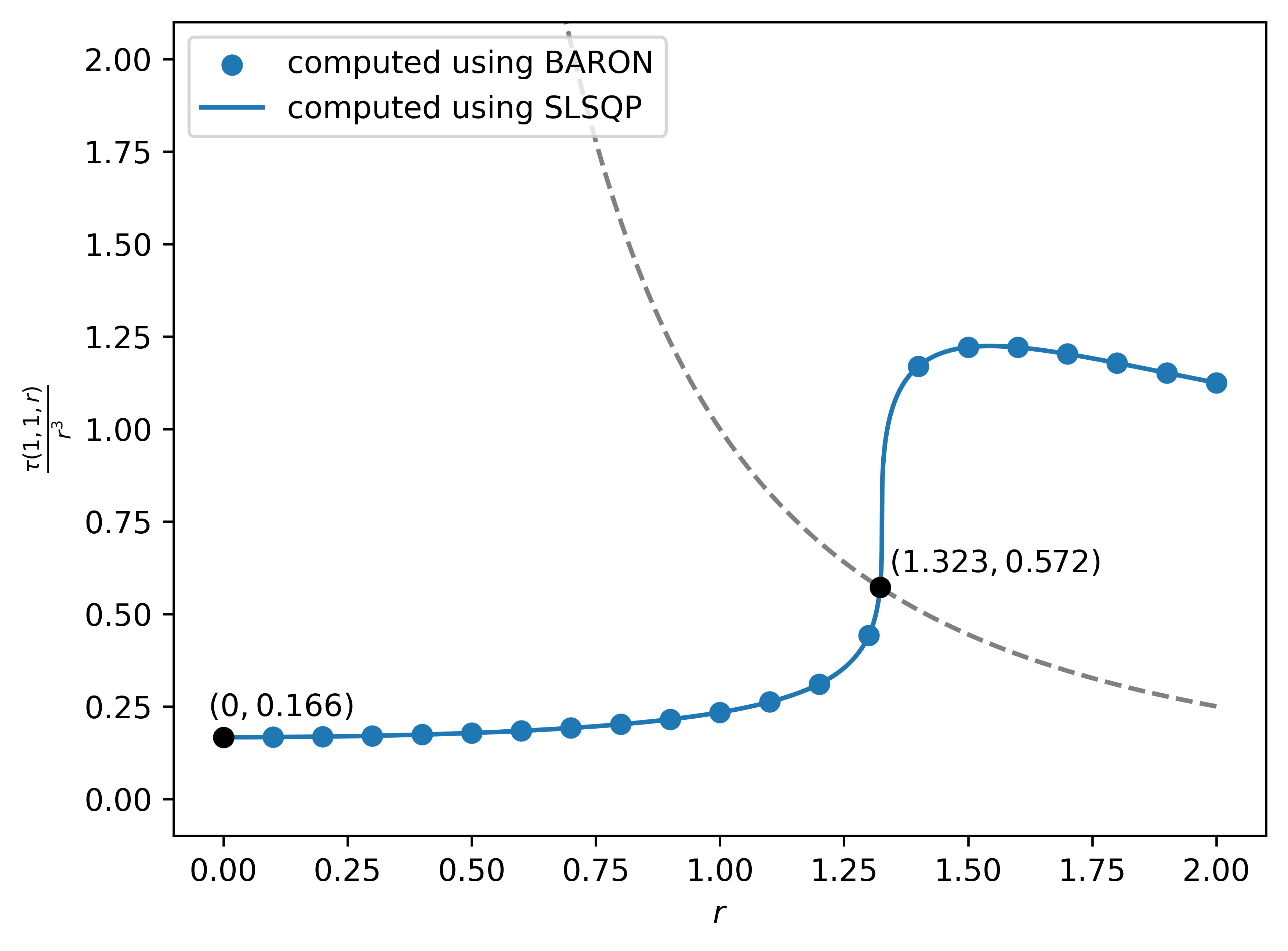}
\caption{Cubically normalized worst-case distance $\frac{\tau\left(1,1,r\right)}{r^{3}}$ as a function of the starting radius $r$.}
\label{fig:quartic-rate}
\end{figure}

For every $r>0$ for which BARON is used, the locally optimal value computed using SLSQP coincides with the upper bound on the optimal value certified by BARON. This provides strong numerical evidence that the locally optimal values computed using SLSQP are globally optimal.

Figure~\ref{fig:quartic-tau} shows that the worst-case distance $\tau\left(1,1,r\right)$ grows slowly for small values of $r$, before increasing rapidly once $r\approx1.3$. The intersection with the dashed line gives the largest starting radius for which the next iterate remains inside this radius in the worst case. Let $r^{\ast}>0$ denote the unique solution to $\tau\left(1,1,r^{\ast}\right)=r^{\ast}$, which is computed numerically to be $r^{\ast}\approx1.323$. Using the scaling properties, $$\text{$\tau\left(\mu,L,r\right)=\sqrt{\frac{\mu}{L}}\tau\left(1,1,\sqrt{\frac{L}{\mu}}r\right)<r\iff\tau\left(1,1,\sqrt{\frac{L}{\mu}}r\right)<\sqrt{\frac{L}{\mu}}r\iff\sqrt{\frac{L}{\mu}}r<r^{\ast}\iff r<\sqrt{\frac{\mu}{L}}r^{\ast}$.}$$

Figure~\ref{fig:quartic-rate} shows that $\frac{\tau\left(1,1,r\right)}{r^{3}}$ converges to a finite limit as $r\to0$, corresponding to the asymptotic cubic convergence rate. Therefore, the method converges cubically for every univariate quartic $f$ that is strongly convex with constant $\mu$ and whose third derivative is Lipschitz continuous with constant $L$ whenever $$\text{$r<\sqrt{\frac{\mu}{L}}r^{\ast}$, where $r^{\ast}\approx1.323$.}$$ The worst-case cubic convergence rate satisfies $$\text{$\frac{\tau\left(\mu,L,r\right)}{r^{3}}=\frac{L}{\mu}\frac{\tau\left(1,1,\sqrt{\frac{L}{\mu}}r\right)}{\left(\sqrt{\frac{L}{\mu}}r\right)^{3}}<\frac{L}{\mu}\frac{\tau\left(1,1,r^{\ast}\right)}{\left(r^{\ast}\right)^{3}}=\frac{L}{\mu}\frac{1}{\left(r^{\ast}\right)^{2}}$, where $\frac{1}{\left(r^{\ast}\right)^{2}}\approx0.572$.}$$

In summary, we obtain the following (tight) result characterizing the local third order convergence rate.
\begin{proposition}
    Let $f:\mathbb{R}\to\mathbb{R}$ be a univariate quartic function. Assume that $f$ is strongly convex with parameter $\mu$ (Assumption~\ref{ass:strongly-convex}) and that the third derivative of $f$ is Lipschitz continuous with parameter $L$ (Assumption~\ref{ass:lipschitz}). Then, for every iterate $x\in\mathbb{R}$ satisfying $\left|x-x^{\ast}\right|<\sqrt{\frac{\mu}{L}}r^{\ast}$, where $r^{\ast}\approx1.323$, the next iterate $x^{+}$ of Algorithm~\ref{alg:dth-order-newton-method}, where $d=3$, in the unconstrained setting (see equation~\eqref{eq:third-order-newton-univariate}) satisfies $$\text{$\left|x^{+}-x^{\ast}\right|<\frac{L}{\mu}\frac{1}{\left(r^{\ast}\right)^{2}}\left|x-x^{\ast}\right|^{3} \approx 0.572\cdot\frac{L}{\mu}{\left|x-x^{\ast}\right|^{3}}.$}$$
\end{proposition}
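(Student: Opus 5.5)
Since $f$ is a quartic, translating $x^{\ast}$ to $0$ and subtracting $f(x^{\ast})$ leaves strong convexity, the Lipschitz constant of $f'''$, and the update~\eqref{eq:third-order-newton-univariate} unchanged. So we may assume $f(x)=ax^{4}+bx^{3}+cx^{2}$. The characterizations derived above show that $f''\geq\mu$ is equivalent to $a\geq0$, $c\geq\frac{\mu}{2}$, $3b^{2}\leq8a(c-\frac{\mu}{2})$, and that the Lipschitz condition is equivalent to $a\leq\frac{L}{24}$.

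The next step is to check that, for quartic $f$, the cubic equation~\eqref{eq:cubic-equation} is equivalent to $G(a,b,c,x,x^{+})=0$. This follows by substituting $f',f'',f'''$ and expanding. It is also worth noting that for $d=3$ the iterate is the unique real root of this cubic: the cubic has positive leading coefficient and derivative $(f''+\frac{f'''}{2}s)^{2}+\frac{(f''')^{2}}{12}s^{2}\ge0$ in $s=x^{+}-x$, so it is strictly increasing. Consequently every admissible pair $(f,x)$ with $|x|\leq r$ gives a feasible point of~\eqref{eq:pep_quartic}, and $|x^{+}|\leq\tau(\mu,L,|x|)$.

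Then I would use the two scaling identities to obtain $\tau(\mu,L,r)=\sqrt{\mu/L}\,\tau(1,1,\sqrt{L/\mu}\,r)$. Write $\rho:=\sqrt{L/\mu}\,|x|<r^{\ast}$. The claim reduces to showing that $\tau(1,1,\rho)<\rho^{3}/(r^{\ast})^{2}$ for $0<\rho<r^{\ast}$. Unscaling then gives
$$|x^{+}|\leq\sqrt{\mu/L}\,\tau(1,1,\rho)<\sqrt{\mu/L}\,\rho^{3}/(r^{\ast})^{2}=\frac{L}{\mu}\frac{|x|^{3}}{(r^{\ast})^{2}}.$$
(The case $x=0$ is trivial, since then $x^{+}=0$.) By the definition of $r^{\ast}$, namely $\tau(1,1,r^{\ast})=r^{\ast}$, the inequality is equivalent to the statement that $\rho\mapsto\tau(1,1,\rho)/\rho^{3}$ is strictly increasing on $(0,r^{\ast}]$.

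This monotonicity is the main obstacle. I would first try to exploit homogeneity. The feasible set of~\eqref{eq:pep_quartic} is monotone in $r$, so $\tau(1,1,\cdot)$ is nondecreasing. To obtain the cubic normalization, I would try to show that a worst case is attained at $|x|=r$ with $a=\frac{L}{24}$, and then view $\tau(1,1,\rho)/\rho^{3}$ as a one-parameter family obtained by rescaling $(b,c)$. If no clean analytic argument emerges, I would fall back on the computational certificate reported above. On a fine grid of $\rho$ values, the global optimization bounds from BARON bound $\tau(1,1,\rho)$, and $\tau$ is monotone, so one can interpolate between grid points. Combined with the observed agreement between SLSQP and BARON, this justifies $\tau(1,1,\rho)/\rho^{3}<\tau(1,1,r^{\ast})/(r^{\ast})^{3}$ on $(0,r^{\ast})$. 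Tightness follows because the bound is attained in the limit $\rho\to r^{\ast}$ by the optimal PEP solutions.
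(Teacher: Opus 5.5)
Your proposal follows the paper's argument: normalize to $f(x)=ax^{4}+bx^{3}+cx^{2}$, encode the step by $G=0$ in the PEP~\eqref{eq:pep_quartic}, rescale to $\tau(1,1,\cdot)$, and rest the final strict inequality on the numerically observed fact that $\tau(1,1,\rho)/\rho^{3}<\tau(1,1,r^{\ast})/(r^{\ast})^{3}=1/(r^{\ast})^{2}$ for $\rho<r^{\ast}$, which the paper reads off Figure~\ref{fig:quartic-rate}, so your argument is exactly as rigorous as the paper's (numerical at the crux). Small corrections: the $s$-derivative of the cubic is exactly $(f''+\frac{f'''}{2}s)^{2}$, with no extra $s^{2}$ term, which still gives a unique real root; the case $x=x^{\ast}$ is not trivial, since the strict inequality then reads $0<0$ and that case must be excluded; and a finite grid with monotone interpolation of $\tau$ cannot certify the strict bound on $(0,\rho_{\min})$ or on a last cell ending at $r^{\ast}$, where the margin vanishes.
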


\subsection{Quasi-self-concordant objective functions}

\begin{definition}
    \label{def:qsc}
    A univariate, convex, three times continuously differentiable function $f:\mathbb{R}\to\mathbb{R}$ is called \emph{quasi-self-concordant} if there exists a constant $M>0$ such that $$\text{$\left|f^{\prime\prime\prime}\left(x\right)\right|\leq Mf^{\prime\prime}\left(x\right)$ for all $x\in\mathbb{R}$.}$$
\end{definition}

Let $f:\mathbb{R}\to\mathbb{R}$ be quasi-self-concordant with constant $M>0$ and strongly convex with constant $\mu>0$. Without loss of generality, we assume that $x^{\ast}=0$. We use the following shorthand notation: $$
\begin{aligned}
    g &:= \text{$f^{\prime}\left(x\right)$,} & g^{+} &:= \text{$f^{\prime}\left(x^{+}\right)$,} & g^{\ast} &:= \text{$f^{\prime}\left(x^{\ast}\right)$,}\\
    h &:= \text{$f^{\prime\prime}\left(x\right)$,} & h^{+} &:= \text{$f^{\prime\prime}\left(x^{+}\right)$,} & h^{\ast} &:= \text{$f^{\prime\prime}\left(x^{\ast}\right)$.}
\end{aligned}
$$ Since $x^{\ast}=0$ is the minimizer of $f$, we have $g^{\ast}=0$. By strong convexity, $h\geq\mu$, $h^{+}\geq\mu$, and $h^{\ast}\geq\mu$.

\begin{lemma}[Corollary 4 in \cite{rubbens2025performance}]
Let $\left\{\left(x_{k},g_{k},h_{k}\right)\right\}_{k=1}^{N}$ be a finite sequence. Then, there exists a function $f:\mathbb{R}\to\mathbb{R}$ that is quasi-self-concordant with cosntant $M>0$ satisfying $$\text{$g_{k}=f^{\prime}\left(x_{k}\right)$ and $h_{k}=f^{\prime\prime}\left(x_{k}\right)$ for all $k\in\left\{1,\dots,N\right\}$}$$ if and only if $$\text{$g_{l}-g_{k}\geq\frac{h_{k}+h_{l}}{M}-\frac{2}{M}\sqrt{h_{k}h_{l}}e^{-\frac{M}{2}\left(x_{l}-x_{k}\right)}$ for all $k,l\in\left\{1,\dots,N\right\}$.}$$
\end{lemma}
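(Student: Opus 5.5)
The plan is to reduce both directions to a one-dimensional extremal problem for the second derivative. First, if $f$ is quasi-self-concordant with $f^{\prime\prime}>0$, then $\phi:=\log f^{\prime\prime}$ satisfies $\left|\phi^{\prime}\right|\leq M$. If $f^{\prime\prime}$ vanishes at one point, a Gronwall argument applied to $\left|f^{\prime\prime\prime}\right|\leq Mf^{\prime\prime}$ forces $f^{\prime\prime}\equiv0$, and this degenerate case I would treat separately. Writing $D:=x_{l}-x_{k}$, the condition then says exactly
$$h_{k}e^{-M\left|s-x_{k}\right|}\leq f^{\prime\prime}\left(s\right)\leq h_{k}e^{M\left|s-x_{k}\right|}\quad\text{for all }s,$$
and likewise with $l$ in place of $k$. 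Since $g_{l}-g_{k}=\int_{x_{k}}^{x_{l}}f^{\prime\prime}\left(s\right)ds$ (oriented integral), the stated inequality should be the sharp bound on this integral under these envelope constraints.

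\textbf{Necessity.} First take $D>0$. On $\left[x_{k},x_{l}\right]$ I use the pointwise lower bound
$$f^{\prime\prime}\left(s\right)\geq\max\left\{h_{k}e^{-M\left(s-x_{k}\right)},\,h_{l}e^{-M\left(x_{l}-s\right)}\right\}.$$
The two exponentials cross at a point $s^{\ast}$ where both equal $\sqrt{h_{k}h_{l}}e^{-MD/2}$. The bound $\left|\log h_{l}-\log h_{k}\right|\leq MD$ guarantees that $s^{\ast}\in\left[x_{k},x_{l}\right]$. Integrating the max piece by piece gives
$$\frac{h_{k}-\sqrt{h_{k}h_{l}}e^{-MD/2}}{M}+\frac{h_{l}-\sqrt{h_{k}h_{l}}e^{-MD/2}}{M},$$
which is exactly the right-hand side. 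For $D<0$ the inequality says $\int_{x_{l}}^{x_{k}}f^{\prime\prime}\leq\frac{2}{M}\sqrt{h_{k}h_{l}}e^{M\left|D\right|/2}-\frac{h_{k}+h_{l}}{M}$. This follows in the same way from the upper envelope $\min\left\{h_{k}e^{M\left|s-x_{k}\right|},\,h_{l}e^{M\left|s-x_{l}\right|}\right\}$, whose crossing value is $\sqrt{h_{k}h_{l}}e^{M\left|D\right|/2}$. For $D=0$ the inequality reduces to $0\geq\frac{1}{M}\left(\sqrt{h_{k}}-\sqrt{h_{l}}\right)^{2}$, which forces $h_{k}=h_{l}$ and $g_{k}=g_{l}$, as it should.

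\textbf{Sufficiency.} Sort the points, so that the data are consistent at repeated points by the $D=0$ case. On each consecutive interval $\left[x_{k},x_{k+1}\right]$ I construct a positive $h$ with $h\left(x_{k}\right)=h_{k}$, $h\left(x_{k+1}\right)=h_{k+1}$, $\left|\left(\log h\right)^{\prime}\right|\leq M$ and $\int h=g_{k+1}-g_{k}$. Applying the two inequalities for the pair $\left(k,k+1\right)$ in both orders shows that the target integral lies between the integrals of the lower and upper envelopes. Both envelopes are admissible, and so is every convex combination of their logarithms, since the constraint $\left|\phi^{\prime}\right|\leq M$ is convex in $\phi$. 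The integral of $\exp$ of these combinations depends continuously on the mixing parameter, so the intermediate value theorem produces an admissible $h$ with the required integral.

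The pieces glue to a positive, log-Lipschitz $h$ on the convex hull of the points, and I extend it by constants outside. Then set $f^{\prime}\left(x\right):=g_{1}+\int_{x_{1}}^{x}h$ and integrate once more. This gives a convex $f$ interpolating all the $g_{k},h_{k}$. Only consecutive pairs are needed in the construction, and non-consecutive pairs are automatically respected by necessity.

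\textbf{Main obstacle.} The construction above only yields $f^{\prime\prime}$ that is Lipschitz, with $f^{\prime\prime\prime}$ existing almost everywhere. Definition~\ref{def:qsc}, however, demands $f\in C^{3}$. I would resolve this by mollifying $\log h$ on each piece while keeping $\left|\phi^{\prime}\right|\leq M$ and the endpoint values fixed, then restoring the exact integral by the same intermediate-value argument. A strict-interior case can be handled first, with the boundary (tight) case obtained by a limiting or explicit argument, following \cite{rubbens2025performance}.
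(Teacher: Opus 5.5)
The paper gives no proof of this lemma (it is quoted from \cite{rubbens2025performance}), so your argument has to be judged on its own.

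\textbf{What works.} Your necessity part is correct. It is also the only direction the paper relies on, since the PEP~\eqref{eq:pep-qsc} is used only as a relaxation. Your sufficiency construction is sound in outline: on each consecutive interval you take $h=\exp((1-\theta)\phi_{\mathrm{lo}}+\theta\phi_{\mathrm{up}})$, with $\phi_{\mathrm{lo}},\phi_{\mathrm{up}}$ the logarithms of the two envelopes, and choose $\theta$ by the intermediate value theorem.

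\textbf{A step you assert but do not prove.} The envelopes take the values $h_k,h_{k+1}$ at the endpoints only if $|\log h_k-\log h_{k+1}|\le MD$, where $D:=x_{k+1}-x_k$. In the sufficiency direction this must be extracted from the data. Adding the $(k,k+1)$ and $(k+1,k)$ inequalities gives $\frac{h_k+h_{k+1}}{2\sqrt{h_kh_{k+1}}}\le\cosh\left(\frac{MD}{2}\right)$, and the left-hand side equals $\cosh\left(\frac12\log\frac{h_k}{h_{k+1}}\right)$, which yields the bound. The same addition settles your degenerate case. If $h_k=0<h_l$, it gives $0\ge 2h_l/M$, which is impossible. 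So either all $h_k$ vanish (then all $g_k$ coincide and an affine $f$ works) or all are positive. You should also state the implicit hypothesis $h_k\ge0$. For $N=1$ and $h_1<0$ the condition reads $0\ge 4h_1/M$, which holds, yet no convex interpolant exists.

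\textbf{The genuine gap} is your final paragraph. The tight case cannot be recovered by a limiting or explicit argument, because under Definition~\ref{def:qsc} as stated (which requires $f\in C^3$) the \emph{if} direction is false there. Take $M=1$, $N=2$, $x_1=0$, $x_2=D>0$, $h_1=h_2=1$ and $g_2-g_1=2-2e^{-D/2}$.
\begin{itemize}
\item The $(1,2)$ inequality holds with equality.
\item The $(2,1)$ inequality reduces to $\cosh(D/2)\ge1$.
\item The diagonal inequalities reduce to $0\ge0$.
\end{itemize}
By your own necessity argument, any quasi-self-concordant interpolant satisfies $f''(s)\ge\max\{e^{-s},e^{s-D}\}$ on $[0,D]$. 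Both sides have the same integral over $[0,D]$, so $f''$ equals this envelope. The envelope has a corner at $s=D/2$, so $f'''(D/2)$ does not exist.

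More generally, equality in the $(k,l)$ inequality with $|\log h_k-\log h_l|<M|x_l-x_k|$ forces $f''$ to coincide with a kinked envelope. No approximation argument can repair this, because such data are attained by no $C^3$ function at all.

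\textbf{How to fix it.} The lemma as stated is correct for the relaxed class of convex $f\in C^2$ with $f''(y)\le e^{M|y-x|}f''(x)$ for all $x,y$. For that class your piecewise construction, with the step above filled in, is already a complete proof and needs no mollification. For $C^3$ functions, sufficiency can be proved when the inequalities are strict for all pairs of distinct points. There your smoothing plan works, for instance by also imposing $\phi'=0$ at every $x_k$, which makes the gluing and the constant extension $C^1$.
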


Let $t:=f^{\prime\prime\prime}\left(x\right)$. Then, $\left|t\right|\leq Mh$.

Using the implicit characterization of step $x^{+}-x$ together with the interpolation conditions above, we obtain the following PEP, whose optimal value is an upper bound on the largest possible distance between the next iterate $x^{+}$ and the minimizer $x^{\ast}=0$ when the starting point $x$ satisfies $\left|x\right|\leq r$, where $r>0$.

\begin{equation}
    \label{eq:pep-qsc}
    \begin{aligned}
        \tau\left(M,\mu,r\right):=\max_{x,x^{+},x^{\ast},g,g^{+},g^{\ast},h,h^{+},h^{\ast},t} &&& \left|x^{+}\right|\\
        \operatorname{s.t.} &&& \text{$g^{+}-g\geq\frac{h+h^{+}}{M}-\frac{2}{M}\sqrt{hh^{+}}e^{-\frac{M}{2}\left(x^{+}-x\right)}$, etc.}\\
         &&& gh+h^{2}\left(x^{+}-x\right)+\frac{ht}{2}\left(x^{+}-x\right)^{2}+\frac{t^{2}}{12}\left(x^{+}-x\right)^{3}=0\\
         &&& \text{$x^{\ast}=0$, $g^{\ast}=0$, $h\geq\mu$, $h^{+}\geq\mu$, $h^{\ast}\geq\mu$, $\left|t\right|\leq Mh$}\\
         &&& \text{$\left|x\right|\leq r$,}
    \end{aligned}
\end{equation} where $M,\mu,r>0$ are fixed. Note that taking $t:=0$ is always feasible and reduces the update to a Newton step. Therefore, using this PEP, we cannot prove any convergence properties that improve upon those of Newton's method. However, our aim  is to show that we can still prove local superlinear convergence to $x^{\ast}$. Indeed, Doikov \cite[\S 4]{doikov2025minimizing} showed (see also \S 5.5.2 in \cite{rubbens2025performance}) that Newton's method converges locally quadratically to $x^{\ast}$ for quasi-self-concordant $f$.

\medskip

\noindent The following scaling properties reduce the number of parameters. For every $\lambda>0$,
\begin{itemize}
    \item $x,x^{+},x^{\ast},g,g^{+},g^{\ast},h,h^{+},h^{\ast},t$ are feasible for $M,\mu,r$ if and only if $x,x^{+},x^{\ast},\lambda g,\lambda g^{+},\lambda g^{\ast},\lambda h,\lambda h^{+},\lambda h^{\ast},\lambda t$ are feasible for $M,\lambda\mu,r$, so $$\text{$\tau\left(M,\lambda\mu,r\right)=\tau\left(M,\mu,r\right)$,}$$
    \item $x,x^{+},x^{\ast},g,g^{+},g^{\ast},h,h^{+},h^{\ast},t$ are feasible for $M,\mu,r$ if and only if $\lambda x,\lambda x^{+},\lambda x^{\ast},\frac{g}{\lambda},\frac{g^{+}}{\lambda},\frac{g^{\ast}}{\lambda},\frac{h}{\lambda^{2}},\frac{h^{+}}{\lambda^{2}},\frac{h^{\ast}}{\lambda^{2}},\frac{t}{\lambda^{3}}$ are feasible for $\left(\frac{\mu}{\lambda^{2}},\frac{M}{\lambda},\lambda r\right)$, so $$\text{$\tau\left(\frac{M}{\lambda},\frac{\mu}{\lambda^{2}},\lambda r\right)=\lambda\tau\left(M,\mu,r\right)$.}$$
\end{itemize}
Combining these two scaling properties and using $\lambda = M$ gives $$\text{$\tau\left(M,\mu,r\right)=\frac{1}{M}\tau\left(1,\frac{\mu}{M^{2}},Mr\right)=\frac{1}{M}\tau\left(1,1,Mr\right)$.}$$

To study the worst-case performance for the class of univariate $f$ that are quasi-self-concordant with constant $M$ and strongly convex with constant $\mu$, we fix $M=\mu=1$ and solve problem~\eqref{eq:pep-qsc} numerically for a range of $r$. For each $r\in\left\{0.1,\ldots,2.0\right\}$,~we compute an upper bound on the optimal value using BARON. For each $r\in\left\{0.001,\ldots,2.000\right\}$, we compute a locally optimal value using SLSQP. The results are shown in Figures~\ref{fig:qsc-tau} and~\ref{fig:qsc-rate}.

\begin{figure}[H]
\centering
\includegraphics[width=0.6\linewidth]{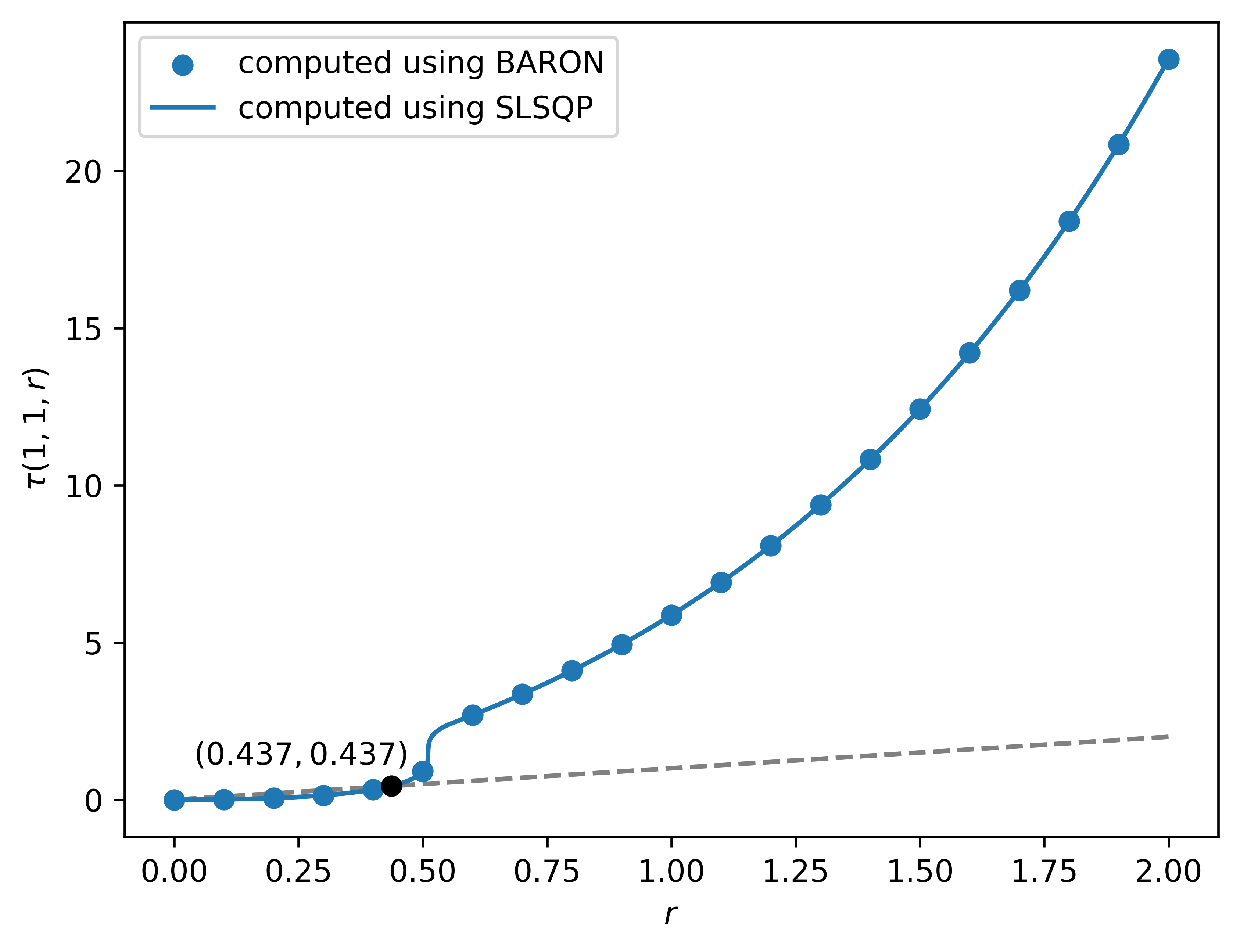}
\caption{Worst-case distance $\tau\left(1,1,r\right)$ as a function of the starting radius $r$.}
\label{fig:qsc-tau}
\end{figure}

\begin{figure}[H]
\centering
\includegraphics[width=0.6\linewidth]{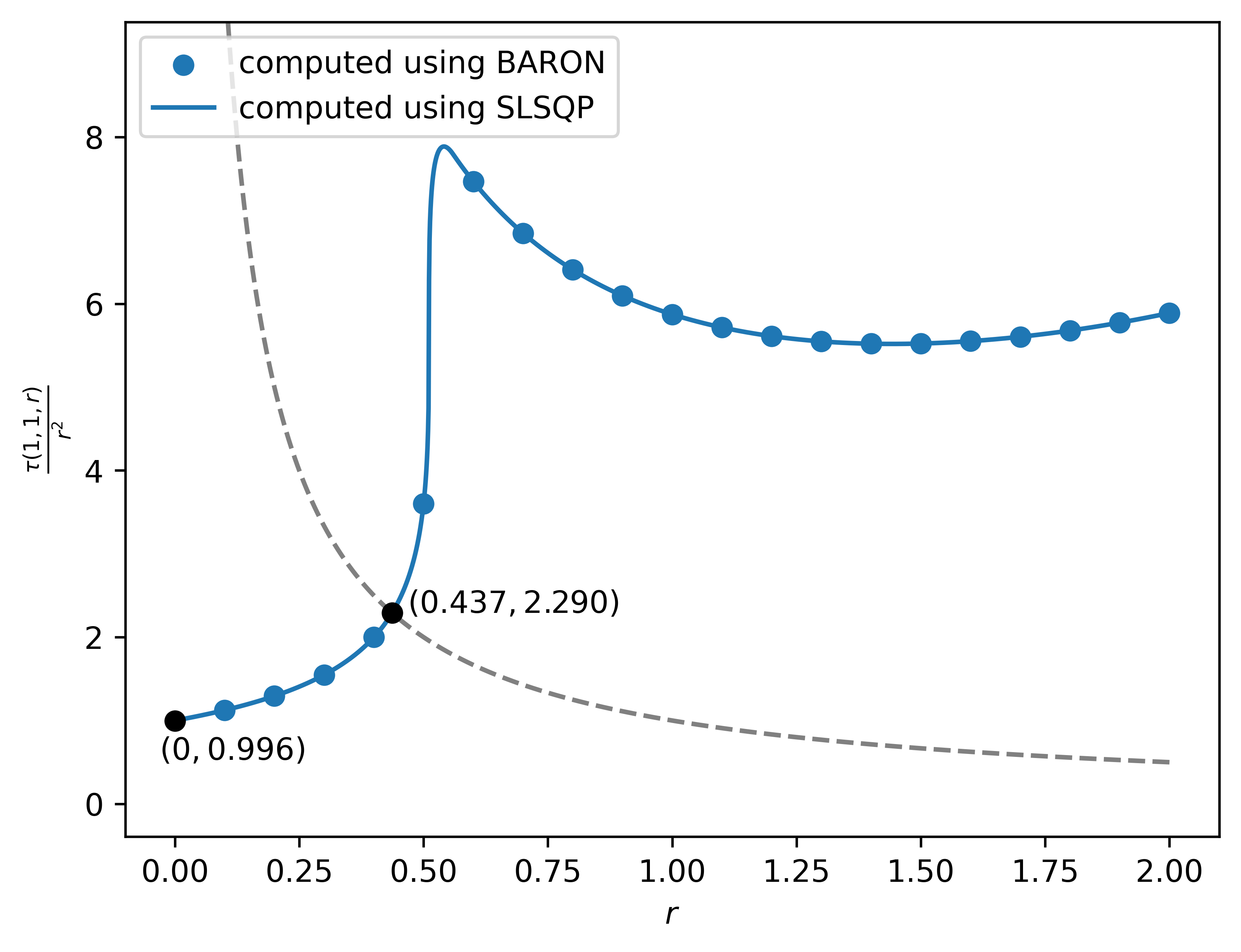}
\caption{Quadratically normalized worst-case distance $\frac{\tau\left(1,1,r\right)}{r^{2}}$ as a function of the starting radius $r$.}
\label{fig:qsc-rate}
\end{figure}

Figure~\ref{fig:qsc-tau} shows that the worst-case distance $\tau\left(1,1,r\right)$ grows slowly for small values of $r$, before increasing rapidly once $r\approx0.5$. The intersection with the dashed line gives the largest starting radius for which the next iterate remains inside this radius in the worst case. Let $r^{\ast}>0$ denote the unique solution to $\tau\left(1,1,r^{\ast}\right)=r^{\ast}$, which is computed numerically to be $r^{\ast}\approx0.437$. Using the scaling properties, $$\text{$\tau\left(M,\mu,r\right)=\frac{1}{M}\tau\left(1,1,Mr\right)<r\iff\tau\left(1,1,Mr\right)<Mr\iff Mr<r^{\ast}\iff r<\frac{1}{M}r^{\ast}$.}$$

Figure~\ref{fig:qsc-rate} shows that $\frac{\tau\left(1,1,r\right)}{r^{2}}$ converges to a finite limit as $r\to0$, corresponding to the asymptotic qua-dratic convergence rate. Therefore, the method converges at least quadratically for every univariate $f$ that is quasi-self-concordant with constant $M$ and strongly convex with constant $\mu$ whenever $$\text{$r<\frac{1}{M}r^{\ast}$, where $r^{\ast}\approx0.437$.}$$ The worst-case quadratic convergence rate satisfies $$\text{$\frac{\tau\left(M,\mu,r\right)}{r^{2}}=M\cdot\frac{\tau\left(1,1,Mr\right)}{\left(Mr\right)^{2}}<M\cdot\frac{\tau\left(1,1,r^{\ast}\right)}{\left(r^{\ast}\right)^{2}}=M\frac{1}{r^{\ast}}$, where $\frac{1}{r^{\ast}}\approx2.290$.}$$

In summary, we obtain the following bound on the local quadratic rate of convergence. (As discussed, this bound is not tight, since the PEP is a relaxation here.)
\begin{proposition}
    Let $f:\mathbb{R}\to\mathbb{R}$ be univariate. Assume that $f$ is quasi-self-concordant with parameter $M$ (see Definition~\ref{def:qsc}) and that $f$ is strongly convex with parameter $\mu$ (Assumption~\ref{ass:strongly-convex}). Then, for every iterate $x\in\mathbb{R}$ satisfying $\left|x-x^{\ast}\right|<\frac{1}{M}r^{\ast}$, where $r^{\ast}\approx0.437$, the next iterate $x^{+}$ of Algorithm~\ref{alg:dth-order-newton-method}, where $d=3$, in the unconstrained setting (see equation~\eqref{eq:third-order-newton-univariate}) satisfies $$\text{$\left|x^{+}-x^{\ast}\right|<M\frac{1}{r^{\ast}}\left|x-x^{\ast}\right|^{2} \approx 2.29\cdot M\left|x-x^{\ast}\right|^{2}$.}$$
\end{proposition}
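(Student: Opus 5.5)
The plan is to read this proposition off the performance estimation problem~\eqref{eq:pep-qsc}, in the same way as the quartic case. The argument has three steps: show that the PEP value dominates the true worst case, reduce the parameters by scaling, and then use the numerically computed normalized value at the fixed point $r^{\ast}$.

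\emph{Step 1: the PEP is a valid relaxation.} Fix a quasi-self-concordant, $\mu$-strongly convex $f$, translate so that $x^{\ast}=0$, and take an iterate $x$ with $\left|x\right|\leq r$. Set $g,h,g^{+},h^{+},h^{\ast}$ to the corresponding derivatives, $g^{\ast}=0$, and $t:=f^{\prime\prime\prime}\left(x\right)$.
\begin{itemize}
\item The step defined by~\eqref{eq:third-order-newton-univariate} satisfies the cubic relation~\eqref{eq:cubic-equation}.
\item Strong convexity gives $h,h^{+},h^{\ast}\geq\mu$.
\item Definition~\ref{def:qsc} gives $\left|t\right|\leq Mh$.
\item The necessity direction of the interpolation lemma (Corollary~4 of \cite{rubbens2025performance}), applied to the three points $x,x^{+},x^{\ast}$, gives the pairwise inequalities.
\end{itemize}
So this tuple is feasible, and $\left|x^{+}\right|\leq\tau\left(M,\mu,r\right)$.

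\emph{Step 2: scaling.} The two scaling invariances stated before the proposition give $\tau\left(M,\mu,r\right)=\frac{1}{M}\tau\left(1,1,Mr\right)$. For this I would check the substitution $\lambda x$, $g/\lambda$, $h/\lambda^{2}$, $t/\lambda^{3}$ in each constraint. Both sides of the exponential interpolation inequality scale by $1/\lambda$ when $M\mapsto M/\lambda$. The cubic equation scales homogeneously by $\lambda^{-3}$. It is therefore enough to treat $M=\mu=1$.

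\emph{Step 3: the bound at the fixed point.} Define $r^{\ast}$ by $\tau\left(1,1,r^{\ast}\right)=r^{\ast}$, and use that $\rho\mapsto\tau\left(1,1,\rho\right)/\rho^{2}$ is nondecreasing. Then, for $\left|x\right|<r^{\ast}/M$ with $\rho:=M\left|x\right|<r^{\ast}$,
$$\left|x^{+}\right|\leq\frac{1}{M}\tau\left(1,1,\rho\right)\leq\frac{1}{M}\frac{\tau\left(1,1,r^{\ast}\right)}{\left(r^{\ast}\right)^{2}}\rho^{2}=\frac{M}{r^{\ast}}\left|x\right|^{2}.$$
The inequality becomes strict if the monotonicity is strict, or if one uses strict decrease of $\tau$ below $r^{\ast}$.

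\emph{Main obstacle.} The rigorous content rests on two facts: the value $r^{\ast}\approx0.437$, and the monotonicity of $\tau\left(1,1,\rho\right)/\rho^{2}$ on $\left(0,r^{\ast}\right]$. Both are established only numerically, by BARON upper bounds together with SLSQP. Monotonicity of $\tau$ itself in $r$ is immediate, since the feasible set grows with $r$, but monotonicity of the ratio is not. For the ratio I would first try a homogeneity argument: scale a worst-case instance at radius $\rho$ up to radius $\rho^{\prime}>\rho$ using the second scaling. That scaling also changes $M$, and $\tau$ is monotone in $M$ because the quasi-self-concordance class grows with $M$; combining the two should give the comparison. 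If that argument fails, I would state the bound as certified by the BARON upper bounds on the grid and interpolate between grid points using the monotonicity of $\tau$ in $r$.
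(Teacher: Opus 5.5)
Your proposal follows the paper's own route. It establishes validity of the relaxation \eqref{eq:pep-qsc}, applies the scaling identity $\tau(M,\mu,r)=\frac{1}{M}\tau(1,1,Mr)$, and concludes with $\frac{\tau(M,\mu,r)}{r^{2}}=M\frac{\tau(1,1,Mr)}{(Mr)^{2}}<M\frac{\tau(1,1,r^{\ast})}{(r^{\ast})^{2}}=\frac{M}{r^{\ast}}$, taking $r^{\ast}$ and the monotonicity of $\rho\mapsto\tau(1,1,\rho)/\rho^{2}$ from the numerics exactly as the paper does. Your fallbacks cannot remove that dependence: the second scaling combined with monotonicity in $M$ only gives $\tau(1,1,\rho^{\prime})\geq\frac{\rho^{\prime}}{\rho}\tau(1,1,\rho)$, i.e.\ monotonicity of $\tau(1,1,\rho)/\rho$ rather than of $\tau(1,1,\rho)/\rho^{2}$, and the BARON bounds exist only on $\{0.1,\ldots,2.0\}$, so interpolating them gives no quadratic bound for $\rho<0.1$.
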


\section{Global convergence in the setting of SOS-convex polynomial constraints}

Since the tensor of $d$\textsuperscript{th}-order partial derivatives of $f$ is assumed to be Lipschitz continuous (Assumption~\ref{ass:lipschitz}), it follows from Theorem~\ref{thm:baes} that $$\text{$f\left(y\right)\leq T_{x,d}\left(y\right)+\frac{L}{\left(d+1\right)!}\left\|y-x\right\|^{d+1}$.}$$ In this section, we assume that $d$ is odd, so that the right-hand side is a polynomial. Furthermore, we assume that an upper bound $M$ on the Lipschitz constant $L$ is known. We redefine the regularized $d$\textsuperscript{th}-order Taylor expansion $\psi_{x,d}:\mathbb{R}^{n}\to\mathbb{R}$ of $f$ at $x$ by $$\text{$\psi_{x,d}\left(y\right):=T_{x,d}\left(y\right)+\max\left\{\frac{M}{\left(d+1\right)!},t\left(x\right)\right\}\left\|y-x\right\|^{d+1}$,}$$ so that $f\left(y\right)\leq\psi_{x,d}\left(y\right)$ for all $y\in\mathbb{R}^{n}$, and define the next iterate $x^{+}$ as the optimal solution to the problem
\begin{equation}
    \label{eq:global_subproblem}
    \begin{aligned}
        \min_{y\in\mathbb{R}^{n}} &&& \psi_{x,d}\left(y\right) && \\
        \operatorname{s.t.} &&& \text{$g_{i}\left(y\right)\leq0$,} && \text{$i\in\left\{1,\dots,m\right\}$.}
    \end{aligned}
\end{equation}

\begin{algorithm}[H]
    \caption{Globally convergent $d$\textsuperscript{th}-order Newton method ($d$ odd) (cf., Algorithm~2 in \cite{ahmadi2024higher})}
    \label{alg:globally-convergent-dth-order-newton-method}
    \KwIn{Starting point $x\in\mathbb{R}^{n}$}
    Solve problem~\eqref{eq:t} to compute $t\left(x\right)$\;
    Let $x^{+}$ be the optimal solution to problem~\eqref{eq:global_subproblem}\;
    Set $x\gets x^{+}$ and repeat\;
\end{algorithm}

\medskip

Algorithm~\ref{alg:globally-convergent-dth-order-newton-method} is called a \emph{majorization--minimization (MM) algorithm}. Given an iterate $x\in\mathbb{R}^{n}$, the next iterate $x^{+}$ is defined as the minimizer of a function that majorizes $f$ subject to the constraints. Under some assumptions, an MM algorithm converges globally to $x^{\ast}$ (see, e.g., \cite{lange2016mm}). We give a proof tailored to our setting.

\newpage

\begin{theorem}
    \label{thm:global-convergence}
    Assume that $f$ is strongly convex (Assumption~\ref{ass:strongly-convex}) and that the tensor of $d$\textsuperscript{th}-order partial derivatives of $f$ is Lipschitz continuous (Assumption~\ref{ass:lipschitz}). Then,
    \begin{itemize}
        \item for any starting point $x_{0}\in\mathbb{R}^{n}$, the sequence of iterates $\left\{x_{k}\right\}_{k=1}^{\infty}$ of Algorithm~\ref{alg:globally-convergent-dth-order-newton-method} converges to $x^{\ast}$,
        \item for every iterate $x\in\mathbb{R}^{n}$ satisfying $\left\|x-x^{\ast}\right\|\leq r$, where $r:=\left(\frac{\left(d-1\right)!\mu}{2L}\right)^{\frac{1}{d-1}}$, the next iterate $x^{+}$ of Algorithm~\ref{alg:globally-convergent-dth-order-newton-method} satisfies $$\text{$\left\|x^{+}-x^{\ast}\right\|\leq\left(\frac{d+1}{2}\right)^{2}\frac{2}{\mu}\left(\frac{L}{d!}+\max\left\{\frac{M}{\left(d+1\right)!},\sup_{\left\|z-x^{\ast}\right\|\leq r}t\left(z\right)\right\}\left(d+1\right)\right)\left\|x-x^{\ast}\right\|^{d}$.}$$
    \end{itemize}
\end{theorem}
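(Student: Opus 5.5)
The proof has two parts, global convergence and the local rate, and I would treat them separately.

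\textbf{Global convergence.} I would use the majorization--minimization structure. Write $\sigma(x):=\max\{\frac{M}{(d+1)!},t(x)\}$, so that $\psi_{x,d}(y)=T_{x,d}(y)+\sigma(x)\|y-x\|^{d+1}$. Since $M\geq L$, Theorem~\ref{thm:baes} gives $f\leq\psi_{x,d}$ everywhere. Because $\psi_{x,d}(x)=f(x)$ and $x_k\in C$ for $k\geq1$, we get
$$f(x_{k+1})\leq\psi_{x_k,d}(x_{k+1})\leq\psi_{x_k,d}(x_k)=f(x_k).$$
Hence $\{f(x_k)\}_{k\geq1}$ is nonincreasing. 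By strong convexity the sublevel set $\{y\in C:f(y)\leq f(x_1)\}$ is compact, and the iterates stay in it. On that compact set $t$ is continuous (by the continuity lemma), so $\sigma$ is bounded, say by $\bar\sigma$.

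To show every limit point is $x^{\ast}$, I would take a convergent subsequence $x_{k_j}\to\bar x$ and pass to the limit in $f(x_{k_j+1})\leq\min_{y\in C}\psi_{x_{k_j},d}(y)$. Here $\psi_{x,d}(y)$ is jointly continuous in $(x,y)$, since $T_{x,d}$ and $\sigma$ are continuous in $x$. Using $f(x_{k+1})\to f_\infty:=\lim_k f(x_k)$, this yields $f_\infty\leq\psi_{\bar x,d}(y)$ for every $y\in C$, while $f(\bar x)=f_\infty$. So $\bar x$ minimizes $\psi_{\bar x,d}$ over $C$.

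Because $\psi_{\bar x,d}$ has the same gradient as $f$ at $\bar x$, its first-order optimality condition reads $\langle\nabla f(\bar x),y-\bar x\rangle\geq0$ for all $y\in C$. By convexity of $f$ this means $\bar x=x^{\ast}$. Since the sequence lies in a compact set and every limit point equals $x^{\ast}$, the whole sequence converges. The step I expect to be the main obstacle is this limit passage, namely justifying that $\min_C\psi_{x,d}$ is upper semicontinuous in $x$. This is immediate from the pointwise inequality above, because the minimum of continuous functions is upper semicontinuous.

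\textbf{Local rate.} I would copy the proof of Theorem~\ref{thm:convergence}, with $d'$ replaced by $d+1$ (which equals $d'$ since $d$ is odd) and $t(x)$ replaced by $\sigma(x)$. First I need the analogue of Lemma~\ref{lem:hessian-psi}: $\nabla^{2}\psi_{x,d}(y)\succeq\frac{\mu}{2}I$ for $\|y-x\|\leq r$. This follows because
$$\nabla^{2}\psi_{x,d}(y)=\nabla^{2}f(y)-\nabla^{2}R_{x,d}(y)+\sigma(x)\nabla^{2}\|y-x\|^{d+1},$$
where $\|\nabla^{2}R_{x,d}(y)\|\leq\frac{L}{(d-1)!}\|y-x\|^{d-1}\leq\frac{\mu}{2}$ by Theorem~\ref{thm:baes} and the choice of $r$, and the last term is positive semidefinite. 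Note also that $\psi_{x,d}-\sigma(x)\|\cdot-x\|^{d+1}$ is SOS-convex whenever $\sigma(x)\geq t(x)$, so $\psi_{x,d}$ is SOS-convex. This puts $\psi_{x,d}$ in the setting of Theorem~\ref{thm:polynomial-integral-lower-bound} with degree $d+1$, which gives the factor $\left(\frac{d+1}{2}\right)^{2}$.

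Next I would add the two variational inequalities at $x^{\ast}$ and $x^{+}$, expand by the fundamental theorem of calculus, and apply the integral lower bound exactly as in Theorem~\ref{thm:convergence}. This gives
$$\|x^{+}-x^{\ast}\|\leq\left(\tfrac{d+1}{2}\right)^{2}\tfrac{2}{\mu}\,\|\nabla R_{x,d}(x^{\ast})-\sigma(x)(d+1)\|x^{\ast}-x\|^{d-1}(x^{\ast}-x)\|.$$
Bounding the right-hand side by $\frac{L}{d!}\|x-x^{\ast}\|^{d}+\sigma(x)(d+1)\|x-x^{\ast}\|^{d}$ gives the stated estimate. The exponent is exactly $d$, so no factor $\max\{r,1\}$ is needed, and $\sigma(x)\leq\max\{\frac{M}{(d+1)!},\sup_{\|z-x^{\ast}\|\leq r}t(z)\}$.
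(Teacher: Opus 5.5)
Your proposal is correct and takes the same route as the paper. The global part uses the same majorization--minimization argument: descent of $f(x_k)$, boundedness from strong convexity, and a limit along a convergent subsequence, using continuity of $x\mapsto\psi_{x,d}$, to show the limit point minimizes $\psi_{\bar x,d}$ and hence $f$ over $C$. The local rate repeats the proof of Theorem~\ref{thm:convergence} with $d'=d+1$ and $t(x)$ replaced by $\max\{\frac{M}{(d+1)!},t(x)\}$.

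Yours is in fact slightly more careful in two places. You start the descent at $k=1$; the paper's step $\psi_{x_0,d}(x_1)\leq\psi_{x_0,d}(x_0)$ requires $x_0\in C$. You also get boundedness from compactness of the sublevel set in $C$, whereas the paper invokes $\nabla f(x^{\ast})=0$, which need not hold with constraints (though $\langle\nabla f(x^{\ast}),x_k-x^{\ast}\rangle\geq0$ for $x_k\in C$ repairs it).
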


\begin{proof}
    Since $x_{k+1}$ minimizes $\psi_{x_{k},d}$ over $C$, $$f\left(x_{k+1}\right)\leq\psi_{x_{k},d}\left(x_{k+1}\right)\leq\psi_{x_{k},d}\left(x_{k}\right)=f\left(x_{k}\right).$$ Hence, the sequence of function values $\left\{f\left(x_{k}\right)\right\}_{k=0}^{\infty}$ is decreasing. Since it is bounded below by $f\left(x^{\ast}\right)$, it is convergent. By the fundamental theorem of calculus, $$\text{$f\left(x_{k}\right)=f\left(x^{\ast}\right)+\left\langle\nabla f\left(x^{\ast}\right),x_{k}-x^{\ast}\right\rangle+\int_{0}^{1}\int_{0}^{t}\left\langle\nabla^{2}f\left(x^{\ast}+s\left(x_{k}-x^{\ast}\right)\right)\left(x_{k}-x^{\ast}\right),x_{k}-x^{\ast}\right\rangle ds\,dt$.}$$ Since $\nabla f\left(x^{\ast}\right)=0$ and $\nabla^{2}f\left(x\right)\succeq\mu I$ for all $x\in\mathbb{R}^{n}$, $$\text{$f\left(x_{k}\right)\geq f\left(x^{\ast}\right)+\frac{\mu}{2}\left\|x-x^{\ast}\right\|^{2}$.}$$ Therefore, $$\text{$\left\|x_{k}-x^{\ast}\right\|^{2}\leq\frac{2}{\mu}\left(f\left(x_{k}\right)-f\left(x^{\ast}\right)\right)\leq\frac{2}{\mu}\left(f\left(x_{0}\right)-f\left(x^{\ast}\right)\right)$.}$$ Hence, the sequence of iterates $\left\{x_{k}\right\}_{k=1}^{\infty}$ is bounded. By the Bolzano--Weierstrass theorem, there~exists~a~sub- sequence of iterates $\left\{x_{k_{j}}\right\}_{j=1}^{\infty}$ that converges to a point $\bar{x}\in C$. Since $x_{k_{j}+1}$ minimizes $\psi_{x_{k_{j}},d}$ over $C$, $$\text{$\psi_{x_{k_{j+1}},d}\left(x_{k_{j+1}}\right)=f\left(x_{k_{j+1}}\right)\leq f\left(x_{k_{j}+1}\right)\leq\psi_{x_{k_{j}},d}\left(x_{k_{j}+1}\right)\leq\psi_{x_{k_{j}},d}\left(y\right)$ for all $y\in C$.}$$ Taking the limit as $j\to\infty$ gives $\psi_{\bar{x},d}\left(\bar{x}\right)\leq\psi_{\bar{x},d}\left(y\right)$ for all $y\in C$. Hence, $\bar{x}$ minimizes $\psi_{\bar{x},d}$ over $C$. Thus, $$\text{$\left\langle\nabla f\left(\bar{x}\right),y-\bar{x}\right\rangle=\left\langle\nabla\psi_{\bar{x},d}\left(\bar{x}\right),y-\bar{x}\right\rangle\geq0$ for all $y\in C$.}$$ Thus, $\bar{x}$ minimizes $f$ over $C$. Hence, $\lim_{k\to\infty}f\left(x_{k}\right)=\lim_{j\to\infty}f\left(x_{k_{j}}\right)=f\left(\bar{x}\right)=f\left(x^{\ast}\right)$. Therefore, $$\text{$\left\|x_{k}-x^{\ast}\right\|^{2}\leq\frac{2}{\mu}\left(f\left(x_{k}\right)-f\left(x^{\ast}\right)\right)\to0$ as $k\to\infty$.}$$ Hence, $x_{k}\to x^{\ast}$ as $k\to\infty$.

    The second result follows from arguments analogous to those in the proof of Theorem~\ref{thm:convergence}.
\end{proof}

It is natural to ask whether there exists a function $f$ satisfying $$\text{$t\left(x\right)\geq\frac{L}{\left(d+1\right)!}$ for all $x\in\mathbb{R}^{n}$,}$$ so that Algorithm~\ref{alg:dth-order-newton-method} is equivalent to Algorithm~\ref{alg:globally-convergent-dth-order-newton-method} with $M:=L$, and therefore converges globally to $x^{\ast}$. If $\left(n,d\right)=\left(1,3\right)$ and $L\neq0$, the answer is unfortunately no, as the following result shows.

\begin{proposition}
    There exists no univariate, three times continuously differentiable function $f:\mathbb{R}\to\mathbb{R}$ that is strongly convex (Assumption~\ref{ass:strongly-convex}) and whose third derivative is Lipschitz continuous (Assumption~\ref{ass:lipschitz}), where $L\neq0$, satisfying $t\left(x\right)\geq\frac{L}{24}$ for all $x\in\mathbb{R}$.
\end{proposition}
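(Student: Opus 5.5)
The plan is to compute $t(x)$ in closed form for $(n,d)=(1,3)$, and then show that $t(x)\geq\frac{L}{24}$ forces $f^{\prime\prime\prime}$ to be bounded away from zero. That in turn contradicts $f^{\prime\prime}\geq\mu$. Here $d^{\prime}=4$, and for $n=1$ SOS-convexity coincides with convexity (a univariate nonnegative polynomial is SOS, and the biform $q(x,y)=p^{\prime\prime}(x)y^{2}$ is SOS iff $p^{\prime\prime}$ is). So problem~\eqref{eq:t} asks for the smallest $t\geq0$ such that $y\mapsto T_{x,3}(y)+t(y-x)^{4}$ is convex.

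Write $u:=y-x$, $h:=f^{\prime\prime}(x)$ and $s:=f^{\prime\prime\prime}(x)$. The second derivative of this polynomial is the quadratic $12tu^{2}+su+h$. Since $h\geq\mu>0$, it is nonnegative for all $u$ exactly when $s^{2}\leq48th$, provided $t>0$; if $s=0$, then $t=0$ already works. Hence
$$t(x)=\frac{\left(f^{\prime\prime\prime}(x)\right)^{2}}{48f^{\prime\prime}(x)}.$$
This agrees with the cubic equation~\eqref{eq:cubic-equation}, whose leading coefficient $\frac{s^{2}}{12}$ equals $4t(x)h$. That gives a sanity check.

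Next, suppose for contradiction that $t(x)\geq\frac{L}{24}$ for all $x$. Then $\left(f^{\prime\prime\prime}(x)\right)^{2}\geq2Lf^{\prime\prime}(x)\geq2L\mu$ for all $x\in\mathbb{R}$. Since $L\neq0$, so $L>0$, this gives $|f^{\prime\prime\prime}(x)|\geq c:=\sqrt{2L\mu}>0$ everywhere. Because $f^{\prime\prime\prime}$ is continuous (indeed Lipschitz) and never vanishes, the intermediate value theorem shows it has constant sign.

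Consider the case $f^{\prime\prime\prime}\geq c$; the other case is symmetric. By the mean value theorem, $f^{\prime\prime}(x)\leq f^{\prime\prime}(0)+cx$ for $x<0$. Letting $x\to-\infty$ gives $f^{\prime\prime}(x)\to-\infty$, which contradicts $f^{\prime\prime}\geq\mu$ from Assumption~\ref{ass:strongly-convex}. In the case $f^{\prime\prime\prime}\leq-c$, one lets $x\to+\infty$ instead. Hence no such $f$ exists.

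The main step requiring care is the closed-form computation of $t(x)$, namely justifying that SOS-convexity reduces to nonnegativity of the univariate quadratic $12tu^{2}+su+h$. After that, the argument is elementary.
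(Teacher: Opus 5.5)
Your proof is correct and follows the paper's argument: $(f^{\prime\prime\prime})^{2}\geq 2Lf^{\prime\prime}\geq 2\mu L$, then constant sign of $f^{\prime\prime\prime}$ by continuity, then integration forcing $f^{\prime\prime}\to-\infty$ in one direction. The only difference is that the paper cites Ahmadi, Chaudhry, and Zhang for $t(x)=\frac{(f^{\prime\prime\prime}(x))^{2}}{48f^{\prime\prime}(x)}$, whereas you derive it yourself from the discriminant of $12tu^{2}+su+h$. Also, your slope $c=\sqrt{2L\mu}$ is the correct constant where the paper writes $\sqrt{8\mu L}$, which is a harmless typo there.
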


\begin{proof}
    Suppose that such a function $f:\mathbb{R}\to\mathbb{R}$ does exist. Ahmadi, Chaudhry, and Zhang \cite{ahmadi2024higher} showed that $$\text{$t\left(x\right)=\frac{\left(f^{\prime\prime\prime}\left(x\right)\right)^{2}}{48f^{\prime\prime}\left(x\right)}$.}$$ Therefore, $\left(f^{\prime\prime\prime}\left(x\right)\right)^{2}\geq48f^{\prime\prime}\left(x\right)\cdot\frac{L}{24}\geq2\mu L>0$ for all $x\in\mathbb{R}$. Since the third derivative of $f$ is continuous, either $f^{\prime\prime\prime}\left(x\right)\geq\sqrt{2\mu L}$ for all $x\in\mathbb{R}$ or $f^{\prime\prime\prime}\left(x\right)\leq-\sqrt{2\mu L}$ for all $x\in\mathbb{R}$.
    
    \smallskip
    
    \begin{itemize}
        \item In the former case, $f^{\prime\prime}\left(x\right)\leq f^{\prime\prime}\left(0\right)+\sqrt{8\mu L}x\to-\infty$ as $x\to-\infty$.
        \item In the latter case, $f^{\prime\prime}\left(x\right)\leq f^{\prime\prime}\left(0\right)-\sqrt{8\mu L}x\to-\infty$ as $x\to\infty$.
    \end{itemize}
    
    \smallskip
    
    \noindent This contradicts the fact that $f$ is strongly convex.
\end{proof}

\section{Generalized settings}

\subsection{Local optimization}

Previously, we assumed that $f$ was globally strongly convex (Assumption~\ref{ass:strongly-convex}) and that the tensor of $d$\textsuperscript{th}-order partial derivatives of $f$ was globally Lipschitz continuous (Assumption~\ref{ass:lipschitz}). Under these assumptions, Algorithm~\ref{alg:dth-order-newton-method} converged locally to the globally optimal solution $x^{\ast}$ with order $d$ (Theorem~\ref{thm:convergence}). We now claim that Algorithm~\ref{alg:dth-order-newton-method} can also be used for local optimization.
Let $x^{\ast}$ be a locally optimal solution. We assume that $f$ satisfies the following regularity conditions.

\begin{assumption}
    \label{ass:locally-strongly-convex}
    $f$ is strongly convex on a neighborhood of $x^{\ast}$. That is, there exist a constant $\mu>0$ and a radius $r_{\mu}>0$ such that $$\text{$\nabla^{2}f\left(x\right)\succeq\mu I$ for all $x\in B\left[x^{\ast},r_{\mu}\right]$.}$$
\end{assumption}

\begin{assumption}
    \label{ass:locally-lipschitz}
    The tensor of $d$\textsuperscript{th}-order partial derivatives of $f$ is Lipschitz continuous on a neighborhood of $x^{\ast}$. That is, there exist a constant $L\geq0$ and a radius $r_{L}>0$ such that $$\text{$\left\|\nabla^{d}f\left(y\right)-\nabla^{d}f\left(x\right)\right\|\leq L\left\|y-x\right\|$ for all $x,y\in B\left[x^{\ast},r_{L}\right]$.}$$
\end{assumption}

Note that Assumption~\ref{ass:locally-strongly-convex} is satisfied if and only if $\nabla^{2}f\left(x^{\ast}\right)\succ0$. A sufficient condition for Assumption~\ref{ass:locally-lipschitz} is that $f$ is $d+1$ times continuously differentiable on a neighborhood of $x^{\ast}$.

\medskip

By arguments analogous to those in the proofs of Theorems~\ref{thm:well-definedness} and~\ref{thm:convergence}, Algorithm~\ref{alg:dth-order-newton-method} is well defined on a neighborhood of $x^{\ast}$ in the sense that, for every iterate $x\in\mathbb{R}^{n}$,
    \begin{itemize}
        \item problem~\eqref{eq:t} is feasible,
        \item problem~\eqref{eq:subproblem} has a unique optimal solution,
    \end{itemize}
and, for every iterate $x\in\mathbb{R}^{n}$ satisfying $\left\|x-x^{\ast}\right\|\leq r$, where $r:=\min\left\{r_{\mu},r_{L},\left(\frac{\left(d-1\right)!\mu}{2L}\right)^{\frac{1}{d-1}}\right\}$, the next~iterate $x^{+}$ of Algorithm~\ref{alg:dth-order-newton-method} satisfies $$\text{$\left\|x^{+}-x^{\ast}\right\|\leq\left(\frac{d^{\prime}}{2}\right)^{2}\frac{2}{\mu}\left(\frac{L}{d!}+\left(\max_{\left\|z-x^{\ast}\right\|\leq r}t\left(z\right)\right)d^{\prime}\max\left\{r,1\right\}\right)\left\|x-x^{\ast}\right\|^{d}$.}$$

\subsection{General convex constraints}

Note that, in the proofs of Theorems~\ref{thm:well-definedness} and~\ref{thm:convergence}, we do not use the fact that the feasible set is described by SOS-convex polynomial, but only that it is closed and convex. Therefore, Algorithm~\ref{eq:main-problem} can be used to solve any optimization problem of the form $$
\begin{aligned}
    \min_{x\in\mathbb{R}^{n}} &&& f\left(x\right)\\
    \operatorname{s.t.} &&& x\in C,
\end{aligned}
$$ where $C\subseteq\mathbb{R}^{n}$ is closed and convex, provided that minimizing an SOS-convex polynomial over $C$ is tractable. A few examples are:
\begin{itemize}
    \item If $C$ is described by SOS-convex polynomial inequalities and linear matrix inequalities, then, by arguments analogous to those in the proof of Theorem~3.3 in \cite{lasserre2009convexity}, minimizing an SOS-convex polynomial over $C$ can be reduced in time polynomial in $n$ to an SDP.
    \item If a point $x_{0}$ and radii $r,R>0$ such that $B\left(x_{0},r\right)\subseteq C\subseteq B\left(x_{0},R\right)$ are known, and the separation problem for $C$ can be solved polynomial time, then the problem of minimizing an SOS-convex polynomial over $C$ can be solved to arbitrary accuracy in polynomial time using the ellipsoid method (see, e.g., Chapter~3 in \cite{grotschel1981ellipsoid}).
\end{itemize}

\section*{Acknowledgement}

This research was funded by the Dutch Research Council (NWO) for the project \emph{The Synergy between Semi- definite Programming and Approximation Theory} (OCENW.M.23.050).

\end{document}